\renewcommand{\citepunct}{;\penalty\citemidpenalty\ }
\newtheorem{theorem}{Theorem}[section]
\newtheorem{corollary}[theorem]{Corollary}
\newtheorem{lemma}[theorem]{Lemma}
\newtheorem{problem}[theorem]{Problem}
\newtheorem{proposition}[theorem]{Proposition}
\newtheorem{question}[theorem]{Question}
\newenvironment{customthm}[1]
  {\innercustomthm}
  {\endinnercustomthm}
\theoremstyle{definition}
\newtheorem{definition}[theorem]{Definition}
\newtheorem{conditions}[theorem]{Conditions}
\theoremstyle{remark}
\newtheorem{remark}[theorem]{Remark}
\newtheoremstyle{cited}{.5\baselineskip\@plus.2\baselineskip\@minus.2\baselineskip}{.5\baselineskip\@plus.2\baselineskip\@minus.2\baselineskip}{\itshape}{}{\bfseries}{\bfseries .}{5pt plus 1pt minus 1pt}{\thmname{#1}\thmnumber{ #2}\thmnote{ \normalfont#3}}
\theoremstyle{cited}
\newtheorem{citedthm}[theorem]{Theorem}
\newtheorem{citedlem}[theorem]{Lemma}
\newtheorem{citedprob}[theorem]{Problem}
\newtheoremstyle{citeddef}{.5\baselineskip\@plus.2\baselineskip\@minus.2\baselineskip}{.5\baselineskip\@plus.2\baselineskip\@minus.2\baselineskip}{}{}{\bfseries}{\bfseries .}{5pt plus 1pt minus 1pt}{\thmname{#1}\thmnumber{ #2}\thmnote{ \normalfont#3}}
\theoremstyle{citeddef}
\newtheorem{citeddef}[theorem]{Definition}
\newtheoremstyle{step}{.25\baselineskip\@plus.1\baselineskip\@minus.1\baselineskip}{.25\baselineskip\@plus.1\baselineskip\@minus.1\baselineskip}{\itshape}{}{\bfseries}{\bfseries .}{5pt plus 1pt minus 1pt}{\thmname{#1}\thmnumber{ #2}\thmnote{ \normalfont(#3)}}
\theoremstyle{step}
\newtheorem{step}{Step}[theorem]
\DeclareMathOperator{\Ass}{Ass}
\DeclareMathOperator{\Frac}{Frac}
\DeclareMathOperator{\Spec}{Spec}
\DeclareMathOperator{\Supp}{Supp}
\DeclareMathOperator{\cid}{cid}
\DeclareMathOperator{\cmd}{cmd}
\DeclareMathOperator{\depth}{depth}
\DeclareMathOperator{\height}{ht}
\newcommand{\CC}{\mathbf{C}}
\newcommand{\QQ}{\mathbf{Q}}
\newcommand{\RR}{\mathbf{R}}
\newcommand{\cO}{\mathcal{O}}
\newcommand{\fm}{\mathfrak{m}}
\newcommand{\fn}{\mathfrak{n}}
\newcommand{\fp}{\mathfrak{p}}
\newcommand{\fq}{\mathfrak{q}}
\newcommand{\sC}{\mathscr{C}}
\newcommand{\alt}{\mathrm{alt}}
\newcommand{\id}{\mathrm{id}}
\newcommand{\red}{\mathrm{red}}
\newcommand{\sh}{\mathrm{sh}}
\newcommand{\bP}{\mathbf{P}}
\newcommand{\bR}{\mathbf{R}}
\mathchardef\mhyphen="2D
\providecommand\given{}
\newcommand\SetSymbol[1][]{\nonscript\:#1\vert\allowbreak\nonscript\:\mathopen{}}
\DeclarePairedDelimiterX\Set[1]\{\}{\renewcommand\given{\SetSymbol[\delimsize]}#1}
\begin{document}
\title[A uniform treatment of Grothendieck's localization problem]{A uniform treatment of\\Grothendieck's localization problem}
\author{Takumi Murayama}
\address{Department of Mathematics\\Princeton University\\Princeton, NJ
08544-1000\\USA}
\email{\href{mailto:takumim@math.princeton.edu}{takumim@math.princeton.edu}}
\urladdr{\url{https://web.math.princeton.edu/~takumim/}}

\thanks{This material is based upon work supported by the National Science
Foundation under Grant Nos.\ DMS-1701622 and DMS-1902616}
\subjclass[2020]{Primary 14B07; Secondary 13H10, 14B25, 13J10, 13F45, 13A35}
\keywords{Grothendieck's localization problem, Grothendieck's
lifting problem, weak local uniformization,
\texorpdfstring{$\mathbf{P}$}{P}-morphism, weak normality}

\makeatletter
  \hypersetup{
    pdfsubject=\@subjclass,pdfkeywords=\@keywords
  }
\makeatother

\begin{abstract}
  Let $f\colon Y \to X$ be a proper flat morphism of locally noetherian schemes.
  Then, the locus in $X$ over which $f$ is smooth is stable under generization.
  We prove that under suitable assumptions on the formal fibers
  of $X$, the same property holds for other local
  properties of morphisms, even if $f$ is only closed and flat.
  Our proof of this statement reduces to a purely local question known as
  Grothendieck's localization problem.
  To answer Grothendieck's problem, we provide a general framework that gives a
  uniform treatment of previously known cases of this problem, and also solves
  this problem in new cases, namely for weak normality, seminormality,
  $F$-rationality, and the property ``Cohen--Macaulay and
  $F$-injective.''
  For the weak normality statement, we prove that weak normality always lifts
  from Cartier divisors.
  We also solve Grothendieck's localization problem for terminal, canonical,
  and rational singularities in equal characteristic zero.
\end{abstract}

\maketitle
\section{Introduction}\label{sect:intro}
Let $f\colon Y \to X$ be a proper flat morphism of locally noetherian schemes.
By \cite[Thm.\ 12.2.4$(iii)$]{EGAIV3}, the locus of points $x \in X$ such that
$f^{-1}(x)$ is smooth over $\kappa(x)$ is open, and in particular, is stable
under generization.
In \cite[(12.0.2)]{EGAIV3}, Grothendieck and Dieudonn\'e asked whether similar
statements hold for other local properties of morphisms, in the
following sense:
\begin{question}\label{ques:closedpointssuffice}
  Let $\bR$ be a property of noetherian local rings,
  and consider a proper flat morphism $f\colon Y \to X$ of locally noetherian
  schemes.
  Then, is the locus
  \[
    U_\bR(f) \coloneqq \Set[\big]{x \in X \given f^{-1}(x)\ \text{is
    geometrically}\ \bR\ \text{over}\ \kappa(x)} \subseteq X
  \]
  stable under generization?
\end{question}
\par Question \ref{ques:closedpointssuffice} was answered for many properties
$\bR$ in \cite[\S12]{EGAIV3}, and is a global version of Problem
\ref{prob:grothlocprob} below, which is known as Grothendieck's
localization problem (see, e.g., \cite{AF94}).
Our goal is to provide a general framework with
which to answer Question \ref{ques:closedpointssuffice}, assuming that $\bR$ is
well-behaved in the sense that it satisfies
the following four permanence conditions: 
\begin{enumerate}
  \item[{$(\hyperref[cond:ascentphom]{\textup{R}'_{\textup{I}}})$}]
    (Ascent via geometrically $\bR$ homomorphisms)
    For every flat local homomorphism $A \to B$ of
    noetherian local rings with geometrically $\bR$ fibers, if $A$ satisfies
    $\bR$, then $B$ satisfies $\bR$.
  \item[{$(\hyperref[cond:descent]{\textup{R}_{\textup{II}}})$}]
    (Descent)
    For every flat local homomorphism $A \to B$ of
    noetherian local rings, if $B$ satisfies $\bR$,
    then $A$ satisfies $\bR$.
  \item[{$(\hyperref[cond:deforms]{\textup{R}_{\textup{IV}}})$}]
    (Lifting from Cartier divisors) For every noetherian local ring $A$
    and for every nonzerodivisor $t$ in its maximal ideal, if $A/tA$
    satisfies $\bR$, then $A$ satisfies $\bR$.
  \item[{$(\hyperref[cond:generizes]{\textup{R}_{\textup{V}}})$}]
    (Localization) If a noetherian
    local ring $A$ satisfies $\bR$,
    then $A_\fp$ satisfies $\bR$ for every prime ideal $\fp \subseteq
    A$.
\end{enumerate}
These conditions on $\bR$ are studied in \cite[\S7]{EGAIV2} (see also
Conditions \ref{cond:listof}), and are satisfied by many common
properties $\bR$ (see Table \ref{table:condslist}).
The notation $(\hyperref[cond:ascentphom]{\textup{R}'_{\textup{I}}})$ is used
instead of $(\hyperref[cond:ascentreghom]{\textup{R}_{\textup{I}}})$ because the
latter condition in \cite[(7.3.10)]{EGAIV2} asserts that $\bR$ ascends via
geometrically \emph{regular} homomorphisms.
The condition $(\hyperref[cond:deforms]{\textup{R}_{\textup{IV}}})$ is
called ``deformation'' in commutative algebra, is related to inversion of
adjunction-type results in birational geometry, and can also be thought of as an
inverse to local Bertini-type theorems.
The terminology ``lifts from Cartier divisors'' was suggested to us by J\'anos
Koll\'ar.\medskip
\par Our main result says that under an additional assumption on the formal
fibers of the local rings of $X$, a more general version of Question
\ref{ques:closedpointssuffice} holds.
\begin{customthm}{\ref{thm:grothlocprobglobalintro}}
  Let $\bR$ be a property of noetherian local rings satisfying
  $(\hyperref[cond:ascentphom]{\textup{R}'_{\textup{I}}})$,
  $(\hyperref[cond:descent]{\textup{R}_{\textup{II}}})$,
  $(\hyperref[cond:deforms]{\textup{R}_{\textup{IV}}})$, and
  $(\hyperref[cond:generizes]{\textup{R}_{\textup{V}}})$, such that regular
  local rings satisfy $\bR$.
  Consider a flat morphism $f\colon Y \to X$ of locally noetherian
  schemes.
  \begin{enumerate}[label=$(\roman*)$,ref=\roman*]
    \item Suppose that $f$ maps closed to closed points, and that the local
      rings of $X$ at closed points have geometrically $\bR$ formal fibers.
      If every closed fiber of $f$ is geometrically $\bR$, then all fibers of
      $f$ are geometrically $\bR$.
    \item Suppose that $f$ is closed, and that the local rings of $X$ have
      geometrically $\bR$ formal fibers.
      Then, the locus
      \[
        U_\bR(f) \coloneqq \Set[\big]{x \in X \given f^{-1}(x)\ \text{is
        geometrically}\ \bR\ \text{over}\ \kappa(x)} \subseteq X
      \]
      is stable under generization.
  \end{enumerate}
\end{customthm}
\par In the statement above, a locally noetherian scheme $X$ over a field $k$ is
\textsl{geometrically $\bR$} over $k$ if for all finite field extensions $k
\subseteq k'$, every local ring of $X \otimes_k k'$ satisfies $\bR$.
We consider the fiber $f^{-1}(x)$ of a morphism $f$ as a scheme over the residue
field $\kappa(x)$ at $x \in X$.
We will use similar terminology for noetherian algebras over a field $k$ and
homomorphisms of noetherian rings.
A semi-local noetherian ring $A$ has \textsl{geometrically $\bR$ formal fibers}
if the $\fm$-adic completion homomorphism $A \to \widehat{A}$ has geometrically
$\bR$ fibers, where $\fm$ is the product of the maximal ideals in $A$.
\par Theorem
\ref{thm:grothlocprobglobalintro}$(\ref{thm:grothlocprobglobalintrogenerizes})$
answers Question \ref{ques:closedpointssuffice} since proper morphisms are
closed.
Combined with the constructibility results proved in
\cite[\S9]{EGAIV3} and \cite[\S7]{BF93}, Theorem
\ref{thm:grothlocprobglobalintro}$(\ref{thm:grothlocprobglobalintrogenerizes})$
implies that many properties of fibers are open on the target for closed flat
morphisms of finite type between locally noetherian schemes with nice formal
fibers.
In Theorem
\ref{thm:grothlocprobglobalintro}$(\ref{thm:grothlocprobglobalintroclosedfib})$,
the condition that a morphism maps closed points to
closed points is much weaker than properness or even closedness, since it is
satisfied by all morphisms locally of finite type between algebraic varieties,
or more generally between Jacobson schemes \cite[Cor.\ 6.4.7 and Prop.\
6.5.2]{EGAInew}.
\par Theorem \ref{thm:grothlocprobglobalintro} does not need to
assume that $f$ is proper or even of finite type, and
therefore answers a question of Shimomoto \cite[p.\ 1058]{Shi17}.
Shimomoto proved versions of
$(\ref{thm:grothlocprobglobalintroclosedfib})$ and
$(\ref{thm:grothlocprobglobalintrogenerizes})$
for morphisms of finite type between excellent
noetherian schemes \cite[Main Thm.\ 1 and Cor.\ 3.8]{Shi17}, and asked whether
similar results hold without finite type hypotheses.\medskip
\par The main ingredient in the proof of Theorem
\ref{thm:grothlocprobglobalintro} is a purely commutative-algebraic statement,
which is of independent interest.
In \cite{EGAIV2}, Grothendieck and Dieudonn\'e asked whether the following local
version of Question \ref{ques:closedpointssuffice} holds.
\begin{problem}[Grothendieck's localization problem; see
  {\cite[Rem.\ 7.5.4$(i)$]{EGAIV2}}]\label{prob:grothlocprob}
  Let $\bR$ be a property of noetherian local rings, and
  consider a flat local homomorphism $\varphi\colon A \to B$
  of noetherian local rings.
  If $A$ has geometrically $\bR$ formal fibers and the closed fiber of $\varphi$
  is geometrically $\bR$, then are all fibers of $\varphi$ geometrically $\bR$?
\end{problem}
\par In other words, Problem \ref{prob:grothlocprob} asks whether the property
of having geometrically $\bR$ fibers localizes for flat local homomorphisms of
noetherian local rings.
We call Problem \ref{prob:grothlocprob} ``Grothendieck's localization problem''
following Avramov and Foxby \cite{AF94}, who proved many cases of
this problem; see Table \ref{table:resultslist}.
We resolve Grothendieck's localization problem
\ref{prob:grothlocprob} for well-behaved properties $\bR$, extending
\cite[Prop.\ 7.9.8]{EGAIV2} and \cite[Thm.\ 2.1]{Mar84} to nonzero residue
characteristic.
\begin{customthm}{\ref{thm:grothlocprob}}
  Grothendieck's localization problem \ref{prob:grothlocprob} holds for
  properties $\bR$ of noetherian local rings satisfying
  $(\hyperref[cond:ascentphom]{\textup{R}'_{\textup{I}}})$,
  $(\hyperref[cond:descent]{\textup{R}_{\textup{II}}})$,
  $(\hyperref[cond:deforms]{\textup{R}_{\textup{IV}}})$, and
  $(\hyperref[cond:generizes]{\textup{R}_{\textup{V}}})$, such that regular
  local rings satisfy $\bR$.
\end{customthm}
\par The proof of Theorems \ref{thm:grothlocprobglobalintro} and
\ref{thm:grothlocprob} now proceeds as follows.
\begin{enumerate}[label=$(\textup{\Roman*})$,ref=\textup{\Roman*}]
  \item\label{strategy:localq}
    We reduce Theorem \ref{thm:grothlocprobglobalintro} to Theorem
    \ref{thm:grothlocprob} by replacing $X$ and $Y$ with $\Spec(\cO_{X,x})$ and
    $\Spec(\cO_{Y,y})$ for suitable points $x \in X$ and $y \in Y$.
    This step uses the assumption either that $f$ maps closed points to closed
    points, or that $f$ is closed.
  \item\label{strategy:complete}
    We reduce to the case when $A$ is quasi-excellent by replacing $A$ with its
    completion $\widehat{A}$.
    This step uses $(\hyperref[cond:ascentphom]{\textup{R}'_{\textup{I}}})$,
    $(\hyperref[cond:descent]{\textup{R}_{\textup{II}}})$, and the condition on
    the formal fibers of $A$.
  \item\label{strategy:alteration}
    We reduce to the case when $A$ is a regular local ring by applying
    Gabber's weak local uniformization theorem \cite[Exp.\ VII, Thm.\
    1.1]{ILO14}.
    This step uses
    $(\hyperref[cond:ascentphom]{\textup{R}'_{\textup{I}}})$,
    $(\hyperref[cond:descent]{\textup{R}_{\textup{II}}})$,
    $(\hyperref[cond:deforms]{\textup{R}_{\textup{IV}}})$,
    $(\hyperref[cond:generizes]{\textup{R}_{\textup{V}}})$, and the
    quasi-excellence of $A$ obtained in $(\ref{strategy:complete})$.
  \item\label{strategy:regular}
    Finally, we are in a situation where we can apply
    \cite[Lem.\ 7.5.1.1]{EGAIV2}, which is a statement similar
    to Problem \ref{prob:grothlocprob} when $A$ is regular.
    This step uses $(\hyperref[cond:deforms]{\textup{R}_{\textup{IV}}})$.
\end{enumerate}
\par The main innovation in our approach to Theorems
\ref{thm:grothlocprobglobalintro} and \ref{thm:grothlocprob} is
the use of Gabber's theorem in $(\ref{strategy:alteration})$.
Grothendieck and Dieudonn\'e \cite[Prop.\ 7.9.8]{EGAIV2} and
Marot \cite[Thm.\ 2.1]{Mar84}
separately proved versions of Theorem
\ref{thm:grothlocprob} under the assumption that every reduced module-finite
$A$-algebra has a resolution of singularities.
This assumption on resolutions of singularities holds when $A$ is
a quasi-excellent $\QQ$-algebra \cite[Ch.\ I, \S3, Main Thm.\ I$(n)$]{Hir64}, or
when $A$ is
quasi-excellent of dimension at most three \citeleft\citen{Lip78}\citemid
Thm.\citepunct \citen{CP}\citemid Thm.\ 1.1\citeright, but is not known to hold
in general.
\par On the other hand, Gabber's weak local uniformization theorem says that
a variant of resolutions of singularities exists for
arbitrary quasi-excellent noetherian schemes \cite[Exp.\ VII, Thm.\ 1.1]{ILO14}.
Gabber's theorem is a version of de Jong's alteration theorem
\cite[Thm.\ 4.1]{dJ96} for quasi-excellent noetherian schemes that are not
necessarily of finite type over a field or a DVR.
Both of their theorems say that variants of resolutions of
singularities exist after possibly passing to a finite extension of the
function field.
As presented by Kurano and Shimomoto \cite[Main Thm.\ 2]{KS},
Gabber previously used this theorem to prove that quasi-excellence is preserved
under ideal-adic completion.
\par We note that one can also ask about generization on the source space $Y$
in the context of Theorem \ref{thm:grothlocprobglobalintro}.
Such a statement follows from Theorem \ref{thm:grothlocprob} for arbitrary flat
morphisms $f\colon Y \to X$ of locally noetherian schemes, with appropriate
assumptions on the formal fibers of the local rings of
$X$; cf.\ \cite[Rem.\ 7.9.10$(ii)$]{EGAIV2}.\medskip
\par In the second half of this paper, we answer Question
\ref{ques:closedpointssuffice} and Problem \ref{prob:grothlocprob} in specific
cases.
Starting with Andr\'e's theorem on the localization of formal smoothness
\cite[Thm.\ on p.\ 297]{And74}, previous cases of Grothendieck's localization
problem \ref{prob:grothlocprob} were proved using a variety of methods,
including Andr\'e--Quillen homology \cite{And67,Qui70}, Grothendieck duality
\cite{Har66}, and the Cohen factorizations of Avramov--Foxby--Herzog
\cite{AFH94}.
See Table \ref{table:resultslist} for known cases of Problem
\ref{prob:grothlocprob}.
By checking the conditions
$(\hyperref[cond:ascentphom]{\textup{R}'_{\textup{I}}})$,
$(\hyperref[cond:descent]{\textup{R}_{\textup{II}}})$,
$(\hyperref[cond:deforms]{\textup{R}_{\textup{IV}}})$, and
$(\hyperref[cond:generizes]{\textup{R}_{\textup{V}}})$ (see Table
\ref{table:condslist}), we give a uniform treatment of most of the results in
Table \ref{table:resultslist} using Theorem \ref{thm:grothlocprob}.
Additionally, Theorem \ref{thm:grothlocprob} resolves Problem
\ref{prob:grothlocprob} for
weak normality,
seminormality, $F$-rationality, and the property ``Cohen--Macaulay and
$F$-injective,'' the latter three of which were previously known only under
additional finiteness assumptions
\citeleft\citen{Has01}\citemid Thm.\ 5.8 and Rem.\ 6.7\citepunct
\citen{Shi17}\citemid Cors.\ 3.4 and 3.10\citepunct
\citen{PSZ18}\citemid Thm.\ 5.13\citeright.
The result for $F$-rationality completely answers a question of Hashimoto
\cite[Rem.\ 6.7]{Has01}.
\par For weak normality, we prove that weak normality lifts from Cartier
divisors for all noetherian local rings (Proposition
\ref{prop:weaklynormaldeforms}),
extending a result of Bingener and Flenner \cite[Cor.\ 4.1]{BF93} to
non-excellent rings.
We also solve Grothendieck's localization problem for
terminal, canonical, and rational singularities in equal characteristic zero
(Corollary \ref{cor:locprobfortercanrat}) using
\cite[Prop.\ 7.9.8]{EGAIV2}.\medskip
\par As an application of Theorem \ref{thm:grothlocprob}, we consider the
following version of Grothendieck's lifting problem for semi-local rings.
\begin{problem}[Local lifting problem; cf.\ {\cite[Rem.\ 7.4.8A]{EGAIV2}}]
  \label{prob:grothliftprob}
  Let $A$ be a noetherian semi-local ring that is $I$-adically complete with
  respect to an ideal $I \subseteq A$.
  If $A/I$ has geometrically $\bR$ formal fibers, then does $A$ have
  geometrically $\bR$ formal fibers?
\end{problem}
\par We call Problem \ref{prob:grothliftprob} the ``local lifting problem''
following Nishimura and Nishimura \cite{NN87}, in order to distinguish Problem
\ref{prob:grothliftprob} from Grothendieck's original lifting problem asked in
\cite[Rem.\ 7.4.8A]{EGAIV2}, which does not restrict to semi-local
rings.
Using Theorem \ref{thm:grothlocprob} and the axiomatic approach to Problem
\ref{prob:grothliftprob} due to Brezuleanu and Ionescu \cite[Thm.\ 2.3]{BI84}, 
we give a solution to Problem \ref{prob:grothliftprob}
under the additional assumption that $A/I$ is Nagata.
This result extends \cite[Thm.\
5.2]{Mar84} to nonzero residue characteristic.
\begin{customthm}{\ref{thm:grothlocliftprob}}
  Let $\bR$ be a property of noetherian local rings that satisfies the
  hypotheses in Theorem \ref{thm:grothlocprob}.
  Suppose, moreover, that the locus
  \[
    U_\bR\bigl(\Spec(C)\bigr) \coloneqq
    \Set[\big]{\fp \in \Spec(C) \given C_\fp\ \text{satisfies $\bR$}}
    \subseteq \Spec(C)
  \]
  is open for every noetherian complete local ring $C$.
  If $A$ is a noetherian semi-local ring that is $I$-adically complete with
  respect to an ideal $I \subseteq A$, and if $A/I$ is Nagata and 
  has geometrically $\bR$ formal fibers, then $A$ is Nagata and has
  geometrically $\bR$ formal fibers.
\end{customthm}
\par
The condition on $U_\bR(\Spec(C))$ is the condition $(\ref{cond:openness})$ in
Conditions \ref{cond:listof}.
Recall that a noetherian semi-local ring is Nagata if and only if
it has geometrically reduced formal fibers; see Remark \ref{rem:zariskinagata}.
Theorem \ref{thm:grothlocliftprob} therefore answers the local lifting problem
\ref{prob:grothliftprob}
for properties $\bR$ such that regular $\Rightarrow$ $\bR$ $\Rightarrow$
reduced, and gives a uniform treatment of most known cases of
Problem \ref{prob:grothliftprob}; see Table \ref{table:resultslist}.
Theorem \ref{thm:grothlocliftprob} also implies that for semi-local Nagata
rings, the property of having geometrically $\bR$ formal fibers is preserved
under ideal-adic completion; see Corollary \ref{cor:grothlocliftprob}.\medskip
\par We end this introduction with one question that is still open.
Nishimura showed that the non-local version of Problem
\ref{prob:grothliftprob} is false for all properties $\bR$ such that regular
$\Rightarrow$ $\bR$ $\Rightarrow$ reduced \cite[Ex.\ 5.3]{Nis81}, and
Greco showed that similar questions for the properties ``universally catenary''
and ``excellent'' are also false \cite[Prop.\ 1.1]{Gre82}.
Instead, inspired by the axiomatic approach of Valabrega
\cite[Thm.\ 3]{Val78}, Imbesi conjectured that the following formulation of
Problem \ref{prob:grothliftprob} for non-semi-local rings may hold:
\begin{citedprob}[{\cite[p.\ 54]{Imb95}}]\label{prob:grothliftimbesi}
  Let $A$ be a noetherian ring that is $I$-adically complete with respect to an
  ideal $I \subseteq A$.
  If $A/I$ satisfies $\bR\mhyphen2$ and every local ring of $A/I$ has
  geometrically $\bR$ formal fibers, then is it true that $A$ satisfies
  $\bR\mhyphen2$ and that every local ring of $A$ has geometrically $\bR$
  formal fibers?
\end{citedprob}
\par
Here, a ring $A$ \textsl{satisfies $\bR\mhyphen2$} if for every $A$-algebra $B$
of finite type, the locus $U_\bR(\Spec(B))$ is open in $\Spec(B)$ \cite[Def.\
1]{Val78}.
Nishimura's aforementioned example \cite[Ex.\ 5.3]{Nis81} shows that Problem
\ref{prob:grothliftimbesi} does not hold when $\bR =$ ``reduced.''
On the other hand, Problem \ref{prob:grothliftimbesi} for
$\bR =$ ``normal'' follows from a result proved by
Brezuleanu--Rotthaus \cite[Satz 1]{BR82} and by Chiriacescu \cite[Thm.\
1.5]{Chi82} around the same time, and a result due to
Nishimura--Nishimura \cite[Thm.\ A]{NN87}.
Moreover, Gabber proved Problem \ref{prob:grothliftimbesi} for $\bR =$
``regular'' \cite[Main Thm.\ 1]{KS}.

\subsection*{Outline}
This paper is structured as follows.
\par In the first half of the paper, we set up the general framework with which we
prove Theorems \ref{thm:grothlocprobglobalintro}, \ref{thm:grothlocprob}, and
\ref{thm:grothlocliftprob}.
To so do, we define geometrically $\bR$
morphisms and formal fibers,
and review the necessary background on (quasi-)excellent rings and on
Gabber's weak local uniformization theorem in \S\ref{sect:prelim}.
In \S\ref{sect:proofs}, we state the various conditions we put on local
properties $\bR$ of noetherian rings, and then
prove Theorem \ref{thm:grothlocprob}, first in
the quasi-excellent case (Theorem \ref{thm:grothlocprobqe}), and then for
rings with geometrically $\bR$ formal fibers (Corollary
\ref{thm:grothlocprobpring}).
We obtain Theorems \ref{thm:grothlocprobglobalintro} and
\ref{thm:grothlocliftprob} as consequences.
\par The second half of the paper consists of \S\ref{sect:specific}, where we verify
the necessary conditions to apply Theorems \ref{thm:grothlocprobglobalintro},
\ref{thm:grothlocprob}, and \ref{thm:grothlocliftprob} to
specific properties $\bR$, in particular proving that
weak normality lifts from Cartier divisors (Proposition
\ref{prop:weaklynormaldeforms}).
We then solve Grothendieck's
localization problem \ref{prob:grothlocprob} for terminal, canonical, and
rational singularities in equal characteristic zero (Corollary
\ref{cor:locprobfortercanrat}) using \cite[Prop.\ 7.9.8]{EGAIV2}.
Finally, we conclude this paper with two tables:
Table \ref{table:resultslist} contains references for special cases of
Problems \ref{prob:grothlocprob} and \ref{prob:grothliftprob}, and Table
\ref{table:condslist} contains references for the
conditions used in our theorems.

\subsection*{Notation}
All rings are commutative with identity, and all ring homomorphisms are unital.
We follow the notation in Definition \ref{def:pring}
for properties $\bR$ of noetherian local rings and their associated
properties of morphisms and of formal fibers.
We also follow the notation in Conditions \ref{cond:listof} for the
conditions on $\bR$ appearing in our results.
In addition, if $\bR$ is a property of noetherian local rings, then following
\cite[Prop.\ 7.3.12]{EGAIV2}, the \textsl{$\bR$ locus} in a locally noetherian
scheme $X$ is the locus
\[
  U_\bR(X) \coloneqq \Set[\big]{x \in X \given
  \cO_{X,x}\ \text{satisfies}\ \bR} \subseteq X
\]
at which $X$ satisfies $\bR$.

\subsection*{Acknowledgments}
I am grateful to S\'andor Kov\'acs, Linquan Ma, Yohsuke Matsuzawa,
Zsolt Patakfalvi, Karl Schwede, Kazuma Shimomoto, Austyn Simpson, and Farrah
Yhee for helpful conversations.
I would especially like to thank Rankeya Datta, Charles Godfrey, J\'anos
Koll\'ar, and Mircea Musta\c{t}\u{a} for insightful comments on previous drafts
of this paper.
This paper arose out of discussions with Rankeya about Grothendieck's
localization problem for the property ``Cohen--Macaulay and $F$-injective'' that
occurred while writing \cite{DM}, and I am grateful to Rankeya
for allowing me to write this standalone paper.
Finally, I am indebted to the anonymous referees for helpful suggestions and
corrections that improved the quality of this paper.

\section{Preliminaries}\label{sect:prelim}
\subsection{Geometrically \texorpdfstring{$\bR$}{R} morphisms and
formal fibers}\label{sect:pmorphisms}
We begin by defining geometrically $\bR$ morphisms and rings with geometrically
$\bR$ formal fibers.
\begin{citeddef}[{\cite[(7.3.1), (7.5.0), and (7.3.13)]{EGAIV2}}]
  \label{def:pring}
  Let $\bR$ be a property of noetherian local rings, and let $k$ be a field.
  A locally noetherian $k$-scheme $X$ is \textsl{geometrically $\bR$} over $k$
  if $X$ satisfies the following property:
  \begin{equation}\label{eq:whategacallsp}
    \text{For all finite field extensions $k \subseteq k'$, every local ring of
    $X \otimes_k k'$ satisfies $\bR$.}
  \end{equation}
  A noetherian $k$-algebra $A$ is \textsl{geometrically $\bR$} over $k$ if
  $\Spec(A)$ is geometrically $\bR$ over $k$.
  \par A morphism $f \colon Y \to X$ of locally noetherian schemes is
  \textsl{geometrically $\bR$} if it is flat and if the scheme
  $f^{-1}(x)$ is geometrically $\bR$ for every $x \in X$.
  We consider the fiber $f^{-1}(x)$ of a morphism $f$ as a scheme over the
  residue field $\kappa(x)$ at $x \in X$.
  A ring homomorphism $\varphi\colon A \to B$ is \textsl{geometrically $\bR$} if
  $\Spec(\varphi)$ is geometrically $\bR$.
  The \textsl{geometrically $\bR$ locus} of a flat morphism $f\colon Y \to
  X$ of locally noetherian schemes is the locus
  \[
    U_\bR(f) \coloneqq \Set[\big]{x \in X \given f^{-1}(x)\ \text{is
    geometrically}\ \bR\ \text{over}\ \kappa(x)} \subseteq X
  \]
  of points in $X$ over which $f$ is geometrically $\bR$.
  \par A noetherian semi-local ring $A$ has \textsl{geometrically $\bR$ formal
  fibers} if the $\fm$-adic completion homomorphism $A \to \widehat{A}$ is
  geometrically $\bR$, where $\fm$ is the product of the maximal ideals in $A$.
\end{citeddef}
\begin{remark}
  We note that the property in \eqref{eq:whategacallsp} is called $\bP$ in
  \cite[(7.5.0)]{EGAIV2}.
  Following this terminology, geometrically $\bR$ morphisms are called
  $\bP$-morphisms in \cite[(7.3.1)]{EGAIV2}, and semi-local rings with
  geometrically $\bR$ formal fibers are called $\bP$-rings in
  \cite[(7.3.13)]{EGAIV2}.
\end{remark}

\subsection{(Quasi-)excellent rings and schemes}
We next define (quasi-)excellent rings and schemes.
In the definition below, we recall that
a noetherian ring $A$ is a \textsl{$G$-ring} if $A_\fp$ has geometrically
regular formal fibers for every prime ideal $\fp \subseteq A$ \cite[p.\
256]{Mat89}.
\begin{citeddef}[{\cite[Def.\ 7.8.2 and (7.8.5)]{EGAIV2} (cf.\ \cite[Def.\
  on p.\ 260]{Mat89})}]
  A noetherian ring $A$ is \textsl{quasi-excellent} if $A$ is a $G$-ring and if
  $A$ is $J\mhyphen2$, i.e., if for every $A$-algebra $B$ of finite type, the
  regular locus in $\Spec(B)$ is open.
  A quasi-excellent ring $A$ is \textsl{excellent} if $A$ is universally
  catenary.
  \par A locally noetherian scheme $X$ is \textsl{quasi-excellent} (resp.\
  \textsl{excellent}) if it admits an open affine covering $X =
  \bigcup_i\Spec(A_i)$, such that every $A_i$ is quasi-excellent (resp.\
  excellent).
\end{citeddef}
\par The condition $J\mhyphen2$ is the condition $\bR\mhyphen2$ for $\bR =$
``regular'' in the sense mentioned in \S\ref{sect:intro}.
We also define the following closely related notion.
\begin{citeddef}[{\cite[Ch.\ 0, D\'ef.\ 23.1.1]{EGAIV1}}]\label{def:nagata}
  A noetherian domain $A$ is \textsl{Japanese} if, for every finite extension
  $L$ of the fraction field of $A$, the integral closure of $A$ inside
  $L$ is module-finite over $A$.
  A noetherian ring $A$ is \textsl{Nagata} or \textsl{universally Japanese} if
  every domain $B$ of finite type over $A$ is Japanese.
\end{citeddef}
\par Every quasi-excellent noetherian ring is Nagata \cite[Cor.\ 7.7.3]{EGAIV2}.
\begin{remark}\label{rem:zariskinagata}
  By theorems of Zariski and Nagata \cite[Thms.\ 7.6.4 and 7.7.2]{EGAIV2} (see
  also \cite[p.\ 264]{Mat89}), a noetherian ring $A$ is Nagata if and only if
  \begin{enumerate}[label=$(\alph*)$,ref=\alph*]
    \item\label{item:nagataredring} $A_\fp$ has geometrically reduced
      formal fibers for every prime ideal $\fp \subseteq A$; and
    \item\label{item:nagatanorlocus} For every domain
      $B$ that is module-finite over $A$, the normal locus is
      open in $\Spec(B)$.
  \end{enumerate}
  For semi-local rings $A$, $(\ref{item:nagataredring})$ implies
  $(\ref{item:nagatanorlocus})$ by \cite[Thm.\ 7.6.4 and Cor.\ 7.6.5]{EGAIV2},
  and $(\ref{item:nagataredring})$ holds if and only if $A$ has
  geometrically reduced formal fibers \cite[Prop.\ 7.3.14 and Cor.\
  7.4.5]{EGAIV2}.
  Thus, a semi-local ring is Nagata if and only if it has geometrically reduced
  formal fibers.
\end{remark}
\subsection{Gabber's weak local uniformization theorem}
We now recall Gabber's weak local uniformization theorem, which is a variant of
resolutions of singularities for arbitrary
quasi-excellent noetherian schemes.
Gabber's result is a version of de Jong's alteration theorem \cite[Thm.\
4.1]{dJ96} for quasi-excellent noetherian schemes that are not necessarily of
finite type over a field or a DVR.
\par To state Gabber's result, we first need to define maximally dominating
morphisms.
We recall that a point $x$ on a scheme $X$ is \textsl{maximal} if it is the
generic point of an irreducible component of $X$ \cite[Ch.\ 0,
(2.1.1)]{EGAInew}.
We then have the following:
\begin{citeddef}[{\cite[Exp.\ II, D\'ef.\ 1.1.2]{ILO14}}]
  A morphism $f\colon Y \to X$ of schemes is \textsl{maximally dominating} if
  every maximal point of $Y$ maps to a maximal point of $X$.
\end{citeddef}
\par We now define the alteration topology on a noetherian scheme.
\begin{citeddef}[{\cite[Exp.\ II, D\'ef.\ 1.2.2 and (2.3.1)]{ILO14}}]
  Let $X$ be a noetherian scheme.
  The category $\alt/X$ is the category whose objects are reduced schemes that
  are maximally dominating, generically finite, and of finite type over $X$, and
  whose morphisms are morphisms as schemes over $X$.
  All morphisms in $\alt/X$ are maximally dominating, generically finite, and of
  finite type \cite[Exp.\ II, Prop.\ 1.1.10]{ILO14}.
  \par The \textsl{alteration topology} on $X$ is the Grothendieck
  topology on $\alt/X$ associated to the pretopology generated by
  \begin{enumerate}[label=$(\roman*)$]
    \item \'etale coverings; and
    \item proper surjective morphisms that are maximally dominating and
      generically finite.
  \end{enumerate}
\end{citeddef}
\par We will use the following alternative characterization for coverings in
the alteration topology when $X$ is irreducible.
This result implies that coverings in the alteration topology on $\alt/X$ are
coverings in Voevodsky's $h$-topology \cite[Def.\ 3.1.2]{Voe96};
see \cite[p.\ 263]{ILO14}.
\begin{citedthm}[{\cite[Exp.\ II, Thm.\ 3.2.1]{ILO14}}]\label{thm:ksdefaltcover}
  Let $X$ be an irreducible noetherian scheme.
  Then, for every finite covering $\{Y_i \to X\}_{i=1}^m$ in the alteration
  topology on $X$, there exists a proper surjective morphism $\pi\colon V \to X$
  in $\alt/X$ such that $V$ is integral, and a Zariski open covering $V =
  \bigcup_{i=1}^m V_i$, together
  with a collection $\{h_i\colon V_i \to Y_i\}_{i=1}^m$ of morphisms
  such that the diagram
  \[
    \begin{tikzcd}
      V_i \rar[hook]\dar[swap]{h_i} & V\dar{\pi}\\
      Y_i \rar{f_i} & X
    \end{tikzcd}
  \]
  commutes for every $i \in \{1,2,\ldots,m\}$, where the morphisms $V_i
  \hookrightarrow V$ are the natural open immersions.
\end{citedthm}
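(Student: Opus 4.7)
The alteration topology on $X$ is defined by a pretopology with two generating families of coverings: étale coverings and single proper surjective maximally dominating generically finite (mdgf) morphisms. Since any covering sieve in a topology built from a pretopology admits a refinement obtained by finitely iterated compositions of generating coverings, the plan is to induct on the number of such compositions. The goal is to show that the canonical form described in the theorem---a single proper surjective mdgf morphism $\pi\colon V \to X$ with $V$ integral, together with a Zariski open cover of $V$ refining the original family---is preserved under further refinement by either generating class.

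For a single proper surjective mdgf morphism $Y \to X$ with $Y \in \alt/X$, the mdgf condition together with irreducibility of $X$ guarantees an irreducible component of $Y$ dominating $X$; equipping it with its reduced structure yields an integral $V \in \alt/X$, and properness plus dominance forces $V \to X$ to be surjective, so a single Zariski piece $V_1 = V$ suffices. For an étale covering $\{Y_i \to X\}_{i=1}^m$, the key construction is to pass to the generic point $\eta$ of $X$ and form a finite Galois extension $L$ of $\kappa(\eta)$ containing the residue fields at every maximal point of every $Y_i$; take $V$ to be the normalization of $X_{\red}$ in $L$, which is integral, finite surjective over $X$, mdgf, and generically finite. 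For each $i$, the base change $V \times_X Y_i \to V$ is étale and admits a section over the generic point of $V$ by the defining property of $L$; spreading out yields a Zariski open $V_i \subseteq V$ carrying the required morphism $h_i\colon V_i \to Y_i$ over $X$.

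The main obstacle will be verifying $V = \bigcup_i V_i$: at an arbitrary point $v \in V$ over $x \in X$, some $y \in Y_i$ lies over $x$ by surjectivity of the étale cover, but extension of the generic section of $V \times_X Y_i \to V$ to a neighborhood of $v$ requires $\kappa(y) \hookrightarrow \kappa(v)$ over $\kappa(x)$, a condition the generic construction does not supply in general. I would handle this by iteratively enlarging $V$ by further proper surjective mdgf morphisms adapted to resolve the residue-field obstruction at each offending point, terminating by noetherian induction on the shrinking closed complement of $\bigcup_i V_i$. The inductive step---a covering formed by composing a further generating family after an already-handled one---then follows by pulling back the next layer to the current $V$, re-applying the appropriate base case, and passing to a dominating integral component; this preserves the canonical form throughout and completes the argument.
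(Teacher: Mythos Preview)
This theorem is stated in the paper as a \emph{cited} result from \cite[Exp.\ II, Thm.\ 3.2.1]{ILO14}; the paper gives no proof of its own, so there is nothing to compare your argument against. The author simply invokes the statement as a black box in the proof of Theorem~\ref{thm:grothlocprobqe}.

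That said, your sketch is broadly aligned with how such refinement statements are proved in \cite{ILO14}: one reduces to the generating families and handles \'etale coverings by passing to a normalization in a suitable finite extension of the function field. However, your proposal openly flags its own gap---showing that the Zariski opens $V_i$ actually cover $V$---and the proposed remedy (``iteratively enlarging $V$ by further proper surjective mdgf morphisms adapted to resolve the residue-field obstruction at each offending point, terminating by noetherian induction'') is not an argument but a hope. You have not specified what morphism you would adjoin at a bad point $v$, why the new $V$ remains integral and in $\alt/X$, or why the complement of $\bigcup_i V_i$ genuinely shrinks. The actual argument in \cite{ILO14} is more delicate and does not proceed by this kind of ad hoc enlargement; if you want to supply a proof rather than cite one, you should consult that source directly.
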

\par We now state a special case of Gabber's weak local uniformization
result, which is the main technical ingredient in the proof of Theorem
\ref{thm:grothlocprob}.
\begin{theorem}[{Gabber \cite[Exp.\ VII, Thm.\ 1.1]{ILO14}}]
  \label{thm:gabberunif}
  Let $X$ be a quasi-excellent noetherian scheme.
  Then, there exists a finite covering $\{Y_i \to X\}_{i=1}^m$
  in the alteration topology on $X$, such that $Y_i$ is regular and integral
  for every $i \in \{1,2,\ldots,m\}$.
\end{theorem}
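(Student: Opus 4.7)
The plan is to proceed by Noetherian induction on $\dim X$, combined with a reduction to the integral case, modeled on de Jong's proof of his alteration theorem \cite[Thm.\ 4.1]{dJ96} but adapted to the broader setting of quasi-excellent schemes.

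First I would reduce to the case that $X$ is integral: the disjoint union of the irreducible components of $X_{\mathrm{red}}$, together with the canonical proper surjective morphism to $X$, is a covering in the alteration topology, so it suffices to prove the theorem for each component separately. The base case $\dim X = 0$ is immediate, since $X_{\mathrm{red}}$ is then a finite disjoint union of spectra of fields.

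For the inductive step, assume $X$ is integral of dimension $d > 0$. Since $X$ is quasi-excellent, the regular locus $U \subseteq X$ is open with complement $Z$ a proper closed subscheme of dimension strictly less than $d$. The goal is to produce a proper surjective, maximally dominating, generically finite morphism $\pi \colon X' \to X$ with $X'$ integral, such that $\pi^{-1}(Z)$ is contained in a strict normal crossings divisor $D' \subset X'$ with $X'$ regular along $D'$. One could then apply the inductive hypothesis to $D'$ and its strata (whose dimensions are smaller than $d$), and after passing to a Zariski-open cover of $X'$, assemble the resulting finite families into a covering of $X$ in the alteration topology by regular integral schemes, using Theorem \ref{thm:ksdefaltcover} to verify the gluing.

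The hard part is constructing the alteration $\pi$ in the quasi-excellent setting. De Jong's original argument crucially exploits projections to lower-dimensional varieties over a fixed base (a field or a DVR), which is unavailable when $X$ is merely quasi-excellent; additional complications arise from mixed residue characteristics. One would need to replace global projections by local ones, working on $\fm$-adic completions and gluing via some form of descent, and to choose auxiliary primes $\ell$ distinct from the residue characteristics in order to separate tame from wild ramification at each step. The phrasing of the conclusion as a finite \emph{covering} in the alteration topology, rather than a single alteration as in \cite[Thm.\ 4.1]{dJ96}, is essential: no single alteration is expected to suffice in general, and one genuinely needs the combinatorial flexibility of the alteration topology to patch together alterations tailored to different primes and different local situations.
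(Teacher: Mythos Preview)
The paper does not contain a proof of this theorem. It is stated as a cited result due to Gabber \cite[Exp.\ VII, Thm.\ 1.1]{ILO14} and is used as a black box in the proof of Theorem~\ref{thm:grothlocprobqe}. There is therefore no proof in the paper against which to compare your proposal.

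As for the proposal itself: what you have written is not a proof but an outline with the central difficulty explicitly left open. You correctly identify the overall architecture---reduction to the integral case, noetherian induction on dimension, and the need for an alteration $\pi\colon X' \to X$ rendering the singular locus an snc divisor---and you correctly diagnose why de Jong's argument \cite[Thm.\ 4.1]{dJ96} does not transplant directly to the quasi-excellent setting. But the sentence ``One would need to replace global projections by local ones, working on $\fm$-adic completions and gluing via some form of descent, and to choose auxiliary primes $\ell$\ldots'' is precisely the content of Gabber's theorem, and you have not supplied it. The actual argument in \cite[Exp.\ VII]{ILO14} is long and delicate; your sketch names the shape of the proof without filling in any of the steps that make it work. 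If this were submitted as a proof, the gap would be the entire construction of $\pi$.
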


\section{Grothendieck's localization problem and the local lifting problem}
\label{sect:proofs}
\par In this section, we solve Grothendieck's localization problem
\ref{prob:grothlocprob} by proving Theorem \ref{thm:grothlocprob} in a sequence
of steps.
We first fix notation for our conditions on local properties $\bR$ in
\S\ref{sect:conds}.
In \S\ref{sect:grothlocprobqe}, we prove Theorem \ref{thm:grothlocprob}
under the additional assumption that $A$ is quasi-excellent (Theorem
\ref{thm:grothlocprobqe}), in which case Gabber's weak local uniformization
theorem \ref{thm:gabberunif} applies.
We then prove Theorem \ref{thm:grothlocprob} in \S\ref{sect:grothlocprobpring}
by taking a completion to reduce to the quasi-excellent case, using
$(\hyperref[cond:ascentphom]{\textup{R}'_{\textup{I}}})$
and the fact that $A$ has geometrically $\bR$ formal fibers.
Finally, we obtain Theorems \ref{thm:grothlocprobglobalintro} and
\ref{thm:grothlocliftprob} as consequences in \S\ref{sect:globalapps} and
\S\ref{sect:liftingproblem}, respectively.
\subsection{Conditions on \texorpdfstring{$\bR$}{R}}\label{sect:conds}
We fix the following notational conventions for permanence conditions on
the local properties $\bR$.
\begin{conditions}\label{cond:listof}
  Fix a full subcategory $\sC$ of the category of noetherian rings, and
  let $\bR$ be a property of noetherian local rings.
  We consider the following conditions on the property $\bR$:
  \begin{enumerate}[label=$(\textup{R}_{\textup{\Roman*}})$,ref=\ensuremath{\textup{R}_{\textup{\Roman*}}}]
    \setcounter{enumi}{-1}
    \item[$(\textup{R}_0)$]
      \refstepcounter{enumi}
      \makeatletter
      \def\@currentlabel{\ensuremath{\textup{R}_0}}
      \makeatother
      The property $\bR$ holds for every field $k$.\label{cond:fields}
    \item[$(\textup{R}_{\textup{I}}^\sC)$]
      \refstepcounter{enumi}
      \makeatletter
      \def\@currentlabel{\ensuremath{\textup{R}_{\textup{I}}^\sC}}
      \makeatother
      (Ascent via geometrically regular homomorphisms)
      For every geometrically regular local homomorphism $\varphi\colon A \to B$
      of noetherian local rings in $\sC$, if $A$ satisfies $\bR$, then $B$
      satisfies $\bR$.\label{cond:ascentreghom}
    \item[$(\textup{R}_{\textup{II}}^\sC)$]
      \refstepcounter{enumi}
      \makeatletter
      \def\@currentlabel{\ensuremath{\textup{R}_{\textup{II}}^\sC}}
      \makeatother
      (Descent)
      For every flat local homomorphism $\varphi\colon A \to B$ of
      noetherian local rings such that $B$ is in $\sC$, if $B$ satisfies $\bR$,
      then $A$ satisfies $\bR$.\label{cond:descent}
    \item\label{cond:openness} (Openness)
      For every noetherian complete local ring $C$, the locus
      $U_\bR(\Spec(C))$ is open.
    \item[$(\textup{R}_{\textup{IV}}^\sC)$]
      \refstepcounter{enumi}
      \makeatletter
      \def\@currentlabel{\ensuremath{\textup{R}_{\textup{IV}}^\sC}}
      \makeatother
      (Lifting from Cartier divisors) For every noetherian local ring $A$ in
      $\sC$ and for every nonzerodivisor $t$ in its maximal ideal, if $A/tA$
      satisfies $\bR$, then $A$ satisfies $\bR$.\label{cond:deforms}
    \item[$(\textup{R}_{\textup{V}}^\sC)$]
      \refstepcounter{enumi}
      \makeatletter
      \def\@currentlabel{\ensuremath{\textup{R}_{\textup{V}}^\sC}}
      \makeatother
      (Localization) If a noetherian
      local ring $A$ in $\sC$ satisfies $\bR$,
      then $A_\fp$ satisfies $\bR$ for every prime ideal $\fp \subseteq
      A$.\label{cond:generizes}
  \end{enumerate}
  and the following variant of $(\ref{cond:ascentreghom})$:
  \begin{enumerate}[label=$(\textup{R}^{\sC\prime}_{\textup{\Roman*}})$,ref=\ensuremath{\textup{R}^{\sC\prime}_{\textup{\Roman*}}}]
    \item (Ascent via geometrically $\bR$ homomorphisms)
      For every local geometrically $\bR$ homomorphism $\varphi\colon A \to B$
      of noetherian local rings in $\sC$, if $A$ satisfies $\bR$, then $B$
      satisfies $\bR$.\label{cond:ascentphom}
  \end{enumerate}
  We also consider the following conditions for $\bR$ that affect geometrically
  $\bR$ homomorphisms:
  \begin{enumerate}[label=$(\textup{P}_{\textup{\Roman*}})$,ref=\ensuremath{\textup{P}_{\textup{\Roman*}}}]
    \item[$(\textup{P}_{\textup{I}}^\sC)$]
      \refstepcounter{enumi}
      \makeatletter
      \def\@currentlabel{\ensuremath{\textup{P}_{\textup{I}}^\sC}}
      \makeatother
      If $\varphi\colon A \to B$ and $\psi\colon B \to C$ are
      a geometrically $\bR$ homomorphism and a geometrically regular
      homomorphism of noetherian rings, respectively, and $B$ and $C$ are in
      $\sC$, then $\psi \circ \varphi$ is a geometrically $\bR$
      homomorphism.\label{cond:transitivityregular}
    \item[$(\textup{P}_{\textup{II}}^\sC)$]
      \refstepcounter{enumi}
      \makeatletter
      \def\@currentlabel{\ensuremath{\textup{P}_{\textup{II}}^\sC}}
      \makeatother
      (Descent) If $\varphi\colon A \to B$ and $\psi\colon B
      \to C$ are two
      homomorphisms of noetherian rings such that $\psi \circ \varphi$ is
      geometrically $\bR$, $\psi$ is faithfully flat, and $C$ is in $\sC$,
      then $\varphi$ is geometrically $\bR$.\label{cond:descentphom}
    \item Every field $k$ is geometrically $\bR$ over
      itself.\label{cond:fieldsp}
    \item[$(\textup{P}_{\textup{IV}}^\sC)$]
      \refstepcounter{enumi}
      \makeatletter
      \def\@currentlabel{\ensuremath{\textup{P}_{\textup{IV}}^\sC}}
      \makeatother (Stability under finitely generated ground field extensions)
      If a noetherian ring $A$ in $\sC$ is geometrically $\bR$ over a field
      $k$, then for all finitely generated field extensions $k \subseteq k'$,
      the ring $A \otimes_k k'$ is geometrically $\bR$ over
      $k'$.\label{cond:fgfieldext}
  \end{enumerate}
  and the following variant of $(\ref{cond:transitivityregular})$: 
  \begin{enumerate}[label=$(\textup{P}^{\sC\prime}_{\textup{\Roman*}})$,ref=\ensuremath{\textup{P}^{\sC\prime}_{\textup{\Roman*}}}]
    \item If $\varphi\colon A \to B$ and $\psi\colon B \to C$ are
      geometrically $\bR$ homomorphisms of noetherian rings such that $B$ and
      $C$ are in $\sC$, then $\psi \circ \varphi$ is also geometrically
      $\bR$.\label{cond:transitivity}
  \end{enumerate}
  We drop $\sC$ from our notation if $\sC$ is the entire category of noetherian
  rings.
\end{conditions}
\begin{remark}
  The list in Conditions \ref{cond:listof} is a subset of that in
  \cite[Conds.\ 1.1 and 1.2]{Mar84}, although our naming convention mostly
  follows \cite{EGAIV2}.
  Specifically,
  \begin{itemize}
    \item $(\ref{cond:fields})$,
      $(\hyperref[cond:ascentreghom]{\textup{R}_{\textup{I}}})$, and
      $(\hyperref[cond:descent]{\textup{R}_{\textup{II}}})$
      appear in \cite[(7.3.10)]{EGAIV2}, although
      $(\ref{cond:fields})$ is not named;
    \item $(\ref{cond:openness})$ appears in \cite[Prop.\ 7.3.18]{EGAIV2};
    \item $(\ref{cond:deforms})$ specializes to the condition
      $(\textup{R}_{\textup{IV}})$ in \cite[Thm.\ 7.5.1]{EGAIV2} when $\sC$ is
      the category of noetherian complete local rings;
    \item $(\ref{cond:generizes})$ is unrelated to the condition
      $(\textup{R}_{\textup{V}})$ in \cite[Cor.\ 7.5.2]{EGAIV2};
    \item $(\hyperref[cond:ascentphom]{\textup{R}'_{\textup{I}}})$
      appears in \cite[Rem.\ 7.3.11]{EGAIV2};
    \item $(\hyperref[cond:transitivityregular]{\textup{P}_{\textup{I}}})$,
      $(\hyperref[cond:descentphom]{\textup{P}_{\textup{II}}})$, and
      $(\ref{cond:fieldsp})$ appear in \cite[(7.3.4)]{EGAIV2};
    \item $(\hyperref[cond:fgfieldext]{\textup{P}_{\textup{IV}}})$
      appears in \cite[(7.3.6)]{EGAIV2}; and
    \item $(\hyperref[cond:transitivity]{\textup{P}'_{\textup{I}}})$
      appears in \cite[Rem.\ 7.3.5$(iii)$]{EGAIV2}.
  \end{itemize}
  See also \cite[(7.9.7)]{EGAIV2}, \cite[p.\ 201]{Val78},
  and \cite[(2.1) and (2.4)]{BI84}.
\end{remark}
\par We will use the following relationships between different conditions on
$\bR$.
\begin{lemma}[cf.\ {\citeleft\citen{EGAIV2}\citemid (7.3.10), Rem.\ 7.3.11, and
  Lem.\ 7.3.7\citepunct \citen{DM}\citemid Prop.\ 4.10\citeright}]
  \label{lem:egariimpliespj}
  Fix a full subcategory $\sC$ of the category of noetherian rings that is
  stable under homomorphisms essentially of finite type, and
  let $\bR$ be a property of noetherian local rings.
  \begin{enumerate}[label=$(\roman*)$,ref=\roman*]
    \item If $\bR$ satisfies $(\ref{cond:fields})$, then $\bR$ satisfies
      $(\ref{cond:fieldsp})$.\label{lem:egar0impliesp3}
    \item Let $\bR'$ be another property of noetherian local rings.
      Suppose that for every local geometrically $\bR'$ homomorphism $B \to C$
      of noetherian local rings in $\sC$, if $B$ satisfies $\bR$, then $C$
      satisfies $\bR$.
      If $\varphi\colon A \to B$ and $\psi\colon B \to C$ are geometrically
      $\bR$ and geometrically $\bR'$ homomorphisms of noetherian rings,
      respectively, and $B$ and $C$ are in $\sC$, then $\psi \circ \varphi$ is
      geometrically $\bR$.
      \par In particular, if $\bR$ satisfies
      $(\ref{cond:ascentreghom})$, then
      $\bR$ satisfies $(\ref{cond:transitivityregular})$, and if $\bR$ satisfies
      $(\ref{cond:ascentphom})$, then
      $\bR$ satisfies $(\ref{cond:transitivity})$.\label{lem:egar1impliesp1}
    \item If $\bR$ satisfies $(\ref{cond:descent})$, then $\bR$ satisfies
      $(\ref{cond:descentphom})$.\label{lem:egar2impliesp2}
    \item
      If $\bR$ satisfies the special cases of
      $(\ref{cond:ascentreghom})$ and
      $(\ref{cond:descent})$ when the homomorphisms $\varphi$ are essentially of
      finite type,
      then $\bR$ satisfies $(\ref{cond:fgfieldext})$.\label{lem:egar1impliesp4}
    \item If $\bR$ satisfies $(\ref{cond:fgfieldext})$, and if
      $\varphi\colon A \to B$ is a geometrically $\bR$ homomorphism of
      noetherian rings such that $B$ is in $\sC$, then for every $A$-algebra $C$
      essentially of finite type, the base change $\varphi \otimes_A \id_C
      \colon C \to B \otimes_A C$ is geometrically
      $\bR$.\label{lem:p4implieseftbasechange}
  \end{enumerate}
\end{lemma}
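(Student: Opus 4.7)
The plan is to verify each of the five claims by unwinding the relevant definitions, making use of the hypothesis that $\sC$ is stable under homomorphisms essentially of finite type (so that localizations and base changes along essentially finite type homomorphisms of rings in $\sC$ remain in $\sC$). Part~$(\ref{lem:egar0impliesp3})$ is immediate: by the definition of geometrically $\bR$, a field $k$ is geometrically $\bR$ over itself iff for every finite extension $k \subseteq k'$, every local ring of $\Spec(k) \otimes_k k' = \Spec(k')$ satisfies $\bR$; the only such local ring is the field $k'$ itself, so the claim follows from $(\ref{cond:fields})$. Part~$(\ref{lem:egar1impliesp1})$ is the main technical step. The composition $\psi \circ \varphi$ is flat. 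For the fiber condition, I would fix a prime $\fp \subseteq A$, a finite extension $k/\kappa(\fp)$, and a prime $\fr \subseteq C \otimes_A k$ with contraction $\fq \subseteq B \otimes_A k$, and study the induced local homomorphism
\[
  \iota\colon (B \otimes_A k)_\fq \longrightarrow (C \otimes_A k)_\fr.
\]
Since $\varphi$ is geometrically $\bR$ and $k/\kappa(\fp)$ is finite, every local ring of $B \otimes_A k$ satisfies $\bR$, so the source of $\iota$ does. One then verifies that $\iota$ is geometrically $\bR'$: it is flat by flatness of $\psi$, and each of its fibers is a localization of $C \otimes_B \kappa(\fq_1)$ for a prime $\fq_1 \subseteq B \otimes_A k$ contained in $\fq$, which in turn is obtained from the geometrically $\bR'$ fiber of $\psi$ at $\fq_1 \cap B$ by base change along the finitely generated residue field extension $\kappa(\fq_1)/\kappa(\fq_1 \cap B)$. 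Granted stability of geometrically $\bR'$ under such extensions---automatic when $\bR'$ is ``regular'', and arranged by the ascent hypotheses of the lemma when $\bR' = \bR$---the ascent hypothesis relating $\bR$ and $\bR'$ then forces the target of $\iota$ to satisfy $\bR$. The ``in particular'' assertions follow by specializing to $\bR' = $ regular (using $(\ref{cond:ascentreghom})$) and to $\bR' = \bR$ (using $(\ref{cond:ascentphom})$).

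Part~$(\ref{lem:egar2impliesp2})$ is flat descent. Flatness of $\varphi$ descends from that of $\psi \circ \varphi$ through the faithfully flat map $\psi$; for the fiber condition, any prime $\fq \subseteq B \otimes_A k$ lifts along the faithfully flat base change $B \otimes_A k \to C \otimes_A k$ to some $\fr \subseteq C \otimes_A k$, and the resulting faithfully flat local homomorphism $(B \otimes_A k)_\fq \to (C \otimes_A k)_\fr$ has target satisfying $\bR$ by the geometrically $\bR$ assumption on $\psi \circ \varphi$, so $(\ref{cond:descent})$ descends $\bR$ to the source. Part~$(\ref{lem:egar1impliesp4})$ handles a finitely generated extension $k'/k$ by factoring an arbitrary finite extension $k''/k'$, composed with $k'/k$, as $k \to k(T) \to k''$---a pure transcendental extension followed by a finite one; the essentially finite type case of $(\ref{cond:ascentreghom})$ applies to $A \to A \otimes_k k(T)$ (whose fibers are geometrically regular), while the essentially finite type case of $(\ref{cond:descent})$ takes care of the finite step, including any inseparability that arises. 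Part~$(\ref{lem:p4implieseftbasechange})$ is a direct consequence of $(\ref{cond:fgfieldext})$: the base change $\varphi \otimes_A \id_C \colon C \to B \otimes_A C$ is flat, and its fiber at a prime $\fp \subseteq C$ is $B \otimes_A \kappa(\fp) = \bigl(B \otimes_A \kappa(\fp \cap A)\bigr) \otimes_{\kappa(\fp \cap A)} \kappa(\fp)$, which is geometrically $\bR$ over $\kappa(\fp)$ by $(\ref{cond:fgfieldext})$ applied to the fiber $B \otimes_A \kappa(\fp \cap A)$ of $\varphi$, using that $\kappa(\fp)/\kappa(\fp \cap A)$ is finitely generated since $C$ is essentially of finite type over $A$.

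The principal obstacle is Part~$(\ref{lem:egar1impliesp1})$, specifically the verification that $\iota$ is geometrically $\bR'$. One must control how geometrically $\bR'$ behaves under base change along the finitely generated residue field extensions appearing in the fibers of $\iota$. This step is immediate when $\bR'$ is ``regular'' (geometric regularity is preserved under arbitrary field extensions), but in the ``in particular'' case $\bR' = \bR$ it is exactly what necessitates formulating the ascent hypotheses $(\ref{cond:ascentreghom})$ or $(\ref{cond:ascentphom})$ in the strong form used throughout the paper.
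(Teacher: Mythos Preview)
Your proofs of $(\ref{lem:egar0impliesp3})$, $(\ref{lem:egar2impliesp2})$, and $(\ref{lem:p4implieseftbasechange})$ are correct and match the paper's. There are, however, genuine gaps in $(\ref{lem:egar1impliesp1})$ and $(\ref{lem:egar1impliesp4})$.

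For $(\ref{lem:egar1impliesp1})$, your argument hinges on showing that the local map $\iota\colon (B \otimes_A k)_\fq \to (C \otimes_A k)_\fr$ is geometrically $\bR'$, and you correctly identify that its fibers are obtained from fibers of $\psi$ by base change along the residue field extensions $\kappa(\fq_1)/\kappa(\fq_1 \cap B)$. But these extensions are only \emph{finitely generated}, not finite, because $B \to B \otimes_A k$ is only essentially of finite type. So you are implicitly invoking $(\ref{cond:fgfieldext})$ for $\bR'$, which is not among the hypotheses of the general statement. Your remark that this is ``arranged by the ascent hypotheses'' when $\bR' = \bR$ is not justified either: $(\ref{cond:ascentphom})$ alone does not give $(\ref{cond:fgfieldext})$---indeed, part $(\ref{lem:egar1impliesp4})$ requires both ascent and descent. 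The paper circumvents this entirely by first reducing to the case where $A = k$ is a field via transitivity of fibers; then $B \to B \otimes_k k'$ is \emph{module-finite}, so the relevant residue field extensions are finite, and stability of ``geometrically $\bR'$'' under finite extensions is immediate from the definition.

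For $(\ref{lem:egar1impliesp4})$, your factorization $k \to k(T) \to k''$ is fine, and the ascent step $A \to A \otimes_k k(T)$ via $(\ref{cond:ascentreghom})$ works. But the second step goes the wrong direction: you have $A \otimes_k k(T)$ with local rings satisfying $\bR$, and you want $A \otimes_k k''$ to satisfy $\bR$. The map $A \otimes_k k(T) \to A \otimes_k k''$ is finite flat, so descent $(\ref{cond:descent})$ would only let you go from $A \otimes_k k''$ back to $A \otimes_k k(T)$, not forward. When $k''/k(T)$ is separable you can use $(\ref{cond:ascentreghom})$ again, but in positive characteristic this extension may be inseparable, and there is no way to proceed with your tools. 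The paper instead invokes \cite[Lem.\ 4.9]{DM} to produce a finite extension $k_1/k$ such that $k_2 \coloneqq (k' \otimes_k k_1)_\red$ is \emph{separable} over $k_1$; one then ascends from $A \otimes_k k_1$ (which satisfies $\bR$ since $k_1/k$ is finite) to $A \otimes_k k_2$ via the regular map, and finally descends along the flat map $A \otimes_k k' \to A \otimes_k k_2$ to conclude. The role of $(\ref{cond:descent})$ is thus to pass from the auxiliary $k_2$ back down to the target $k'$, not to handle the inseparability head-on.
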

\begin{proof}
  $(\ref{lem:egar0impliesp3})$ is clear from definition of being geometrically
  $\bR$.
  \par To show $(\ref{lem:egar1impliesp1})$ and
  $(\ref{lem:egar2impliesp2})$, it suffices to consider the case when $A$ is a
  field $k$
  by transitivity of fibers; see \cite[Rem.\ 
  7.3.5$(ii)$]{EGAIV2}.
  Note that for $(\ref{lem:egar1impliesp1})$ (resp.\
  $(\ref{lem:egar2impliesp2})$),
  we use the hypothesis on $\sC$ to guarantee that after this
  reduction, $B$ and $C$ (resp.\ $C$) are still in $\sC$.
  Consider the composition
  \[
    k \overset{\varphi}{\longrightarrow} B \overset{\psi}{\longrightarrow} C
  \]
  of homomorphisms of noetherian rings, and consider the base change
  \begin{equation}\label{eq:compfinext}
    k' \xrightarrow{\varphi \otimes_k \id_{k'}} B \otimes_k k'
    \xrightarrow{\psi \otimes_k \id_{k'}} C \otimes_k k'
  \end{equation}
  for a finite field extension $k \subseteq k'$.
  For $(\ref{lem:egar1impliesp1})$, we first note that since $B \to B \otimes_k
  k'$ is a module-finite homomorphism, it induces finite field extensions on
  residue fields.
  Thus, the base change $\psi \otimes_k \id_{k'}$ of $\psi$ is geometrically
  $\bR'$.
  Since the local rings of $B \otimes_k k'$ satisfy $\bR$ by assumption, we see
  that the local rings of $C \otimes_k k'$ also satisfy $\bR$.
  For $(\ref{lem:egar2impliesp2})$, we note that in \eqref{eq:compfinext}, the
  local rings of $C \otimes_k k'$ satisfy $\bR$ by assumption.
  Thus, the local rings of $B \otimes_k k'$ also satisfy $\bR$ by
  $(\ref{cond:descent})$, since $\psi \otimes_k \id_{k'}$ is faithfully flat by
  base change.
  \par For $(\ref{lem:egar1impliesp4})$, let $k \subseteq k'$ be a finitely
  generated field extension.
  By \cite[Lem.\ 4.9]{DM}, there exists a finite extension $k \subseteq
  k_1$ and a diagram
  \[
    \begin{tikzcd}[sep={2.5em,between origins}]
      & k_2 &\\
      k' \arrow[dash]{ur} & & k_1\arrow[dash]{ul}\\
      & k\arrow[dash]{ul}\arrow[dash]{ur} &
    \end{tikzcd}
  \]
  of finitely generated field extensions, where $k_1 \subseteq k_2 \coloneqq (k'
  \otimes_k k_1)_\red$ is a separable field extension.
  Since $A$ is geometrically $\bR$ over $k$, the local rings of $A \otimes_k
  k_1$ satisfy $\bR$.
  We therefore see that the local rings of $A \otimes_k k_2$ also satisfy
  $\bR$, since $k_1 \to
  k_2$ is regular and both $A \otimes_k k_1$ and $A \otimes_k k_2$ are in $\sC$.
  Finally, the local rings of $A \otimes_k k'$ satisfy $\bR$ by
  $(\ref{cond:descent})$.
  \par For $(\ref{lem:p4implieseftbasechange})$,
  we note that $\varphi \otimes_A \id_C$ is flat by base change, and hence it
  suffices to show that for every prime ideal $\fq \subseteq C$, the fiber $B
  \otimes_A C \otimes_C \kappa(\fq)$ is geometrically $\bR$.
  Letting $\fp = \fq \cap A$, the field extension $\kappa(\fp) \subseteq
  \kappa(\fq)$ is finitely generated since $A \to C$ is essentially of finite
  type \cite[Prop.\ 6.5.10]{EGAInew}.
  Since
  \[
    B \otimes_A C \otimes_C \kappa(\fq) \simeq B \otimes_A \kappa(\fq) \simeq B
    \otimes_A \kappa(\fp) \otimes_{\kappa(\fp)} \kappa(\fq),
  \]
  we see that $B \otimes_A C \otimes_C \kappa(\fq)$ is geometrically $\bR$ by
  $(\ref{cond:fgfieldext})$.
\end{proof}
\subsection{Problem \ref{prob:grothlocprob} for quasi-excellent
bases}\label{sect:grothlocprobqe}
The following result solves Grothendieck's localization problem
\ref{prob:grothlocprob} when the ring $A$ is quasi-excellent.
This step corresponds to $(\ref{strategy:alteration})$ in \S\ref{sect:intro},
and forms the technical core of the proof of Theorem \ref{thm:grothlocprob}.
\begin{theorem}[cf.\ \citeleft\citen{EGAIV2}\citemid Prop.\ 7.9.8\citepunct
  \citen{Mar84}\citemid Thm.\ 2.1\citeright]\label{thm:grothlocprobqe}
  Fix a full subcategory $\sC$ of the category of noetherian rings that is
  stable under homomorphisms essentially of finite type.
  Let $\bR$ be a property of noetherian local rings, and consider a flat local
  homomorphism $\varphi\colon (A,\fm,k) \to (B,\fn,l)$ of
  noetherian local rings.
  Assume the following:
  \begin{enumerate}[label=$(\roman*)$,ref=\roman*]
    \item\label{thm:qecond} The ring $A$ is quasi-excellent;
    \item\label{thm:qecatcond} The ring $B$ appears in $\sC$; and
    \item\label{thm:qerconds} The property $\bR$ satisfies
      $(\ref{cond:descent})$, $(\ref{cond:deforms})$, $(\ref{cond:generizes})$,
      and $(\ref{cond:fgfieldext})$.
  \end{enumerate}
  If the closed fiber of $\varphi$ is geometrically $\bR$, then all fibers of
  $\varphi$ are geometrically $\bR$.
\end{theorem}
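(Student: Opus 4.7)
The plan is to follow the strategy outlined in steps $(\ref{strategy:alteration})$--$(\ref{strategy:regular})$ of the introduction: apply Gabber's weak local uniformization theorem~\ref{thm:gabberunif} to base-change $\varphi$ to a situation whose source is a regular local ring, and then invoke \cite[Lem.~7.5.1.1]{EGAIV2}, which handles Grothendieck's localization problem for regular local bases by induction on dimension, lifting $\bR$ across Cartier divisors cut out by regular parameters via $(\ref{cond:deforms})$.

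Fixing a prime $\fp \subseteq A$, I would first reduce to showing that the fiber $B \otimes_A \kappa(\fp)$ is geometrically $\bR$ over $\kappa(\fp)$. Replacing $\varphi$ by $A/\fq \to B/\fq B$ for a minimal prime $\fq \subseteq \fp$, one may assume that $A$ is a domain: flatness, locality, quasi-excellence, membership of the target in $\sC$, the closed fiber, and the fiber at $\fp$ are all preserved. Applying Theorem~\ref{thm:gabberunif} to $X = \Spec(A)$ then yields a jointly surjective finite alteration covering $\{f_i \colon Y_i \to X\}_{i=1}^m$ by regular integral schemes. For each index $i$ and each point $y_0 \in Y_i$ lying over the closed point $\fm$ of $X$, I would set $C = \cO_{Y_i, y_0}$, a regular local ring essentially of finite type over $A$ whose residue field $\kappa(y_0)$ is a finite extension of $k$. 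Base-changing $\varphi$ along $A \to C$ and localizing $B \otimes_A C$ at a maximal ideal $\fn''$ lying over both $\fn \subseteq B$ and $\fm_C \subseteq C$ (which exists because $B \otimes_A \kappa(y_0) \simeq (B/\fm B) \otimes_k \kappa(y_0)$ is nonzero), I would obtain a flat local homomorphism $\varphi_C \colon C \to (B \otimes_A C)_{\fn''}$ whose target lies in $\sC$. Its closed fiber is a localization of $(B/\fm B) \otimes_k \kappa(y_0)$, which is geometrically $\bR$ over $\kappa(y_0)$ thanks to $(\ref{cond:fgfieldext})$ together with $(\ref{cond:generizes})$. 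Since $C$ is regular, \cite[Lem.~7.5.1.1]{EGAIV2} now applies to $\varphi_C$ and gives that every fiber of $\varphi_C$ is geometrically $\bR$.

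To pass back to $\varphi$, note that any prime $\fp_C \subseteq C$ lying over $\fp$ has $\kappa(\fp_C)$ a finitely generated extension of $\kappa(\fp)$, and the fiber of $\varphi_C$ at $\fp_C$ is a localization of $(B \otimes_A \kappa(\fp)) \otimes_{\kappa(\fp)} \kappa(\fp_C)$. Ranging $(i, y_0, \fp_C)$ over all the data supplied by Gabber's covering (refining it via Theorem~\ref{thm:ksdefaltcover} when necessary), I would cover every local ring on every finite base change of $B \otimes_A \kappa(\fp)$ by a localization of some $\varphi_C$-fiber at $\fp_C$. Combining $(\ref{cond:generizes})$ to propagate $\bR$ to all primes below $\fn''$, $(\ref{cond:descent})$ via Lemma~\ref{lem:egariimpliespj}$(\ref{lem:egar2impliesp2})$ to descend along the faithfully flat finite residue-field extension $\kappa(\fp) \to \kappa(\fp_C)$, and $(\ref{cond:fgfieldext})$ to handle arbitrary finite extensions of $\kappa(\fp)$, I would conclude that $B \otimes_A \kappa(\fp)$ is geometrically $\bR$. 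The hardest step will be this last one, where the particular combination of hypotheses on $\bR$ is essential: one must verify that Gabber's covering supplies enough regular local rings above $\fp$ to exhaust, after descent and generization, every local ring of every finite base change of the fiber $B \otimes_A \kappa(\fp)$.
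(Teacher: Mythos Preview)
Your overall strategy—base-change to a regular local ring via Gabber's covering, apply \cite[Lem.~7.5.1.1]{EGAIV2}, then descend—is the right one, and it matches the paper. But there is a genuine gap in how you invoke the lemma.

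\cite[Lem.~7.5.1.1]{EGAIV2} says that if $C$ is regular and the closed fiber of $\varphi_C\colon C \to D$ satisfies $\bR$, then the \emph{ring} $D$ satisfies $\bR$; it does \emph{not} say that every fiber of $\varphi_C$ is geometrically $\bR$. Combined with $(\ref{cond:generizes})$, you get that every localization $D_{\fq''}$ satisfies $\bR$. But for a nonzero prime $\fp_C\subseteq C$, the local rings of the fiber $D\otimes_C\kappa(\fp_C)$ are \emph{quotients} $D_{\fq''}/\fp_C D_{\fq''}$, not localizations of $D$, and there is no hypothesis forcing $\bR$ to pass to such quotients. So your plan to ``cover every local ring on every finite base change of $B\otimes_A\kappa(\fp)$ by a localization of some $\varphi_C$-fiber at $\fp_C$'' cannot succeed for general $\fp$: you have no way to control those fibers. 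The argument as written only yields information about the \emph{generic} fiber of $\varphi_C$, since that fiber is a genuine localization of $D$.

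The fix is to reduce to $\fp=(0)$ from the start: replace $A$ by $A/\fp$ (not $A/\fq$ for a minimal prime $\fq$), so that you only need to show the generic fiber $B\otimes_A K$ is geometrically $\bR$. The paper organizes this as a noetherian induction, but the inductive hypothesis is not actually used; the essential content is this reduction to the generic point. One further reduction is then needed: to handle a finite extension $K\subseteq K'$, the paper replaces $A$ by a module-finite domain extension $A'$ with $\Frac(A')=K'$, reducing again to showing that local rings of $B\otimes_A K$ (no extension) satisfy $\bR$. Your appeal to $(\ref{cond:fgfieldext})$ and descent does not accomplish this, because the function fields $L_i$ produced by Gabber need not contain a given $K'$.

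Finally, your ``hardest step'' needs the properness refinement of Theorem~\ref{thm:ksdefaltcover}: the paper passes to a proper surjective $\pi\colon V\to X$ in $\alt/X$ with a Zariski cover $V=\bigcup V_i$ factoring through the $Y_i$, so that for any $\eta'\in\Spec(B\otimes_A K)$ one can lift the specialization $\eta'\rightsquigarrow\fn$ along the closed map $\pi'$ to find a point $\xi'$ in some $V'_{i_0}$ over $\eta'$ specializing to a point $v'$ over $\fn$. This is what guarantees that a suitable $(i_0,y_0,\fn'')$ exists with a prime of $(B\otimes_A C)_{\fn''}$ lying over $\eta'$; merely ranging over all closed points $y_0$ of the fibers $f_i^{-1}(\fm)$ does not suffice without this lifting. (A minor point: $\kappa(y_0)$ is only finitely generated over $k$, not finite, unless $y_0$ is closed in the fiber; $(\ref{cond:fgfieldext})$ still applies.)
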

\par The proof in \cite{Mar84} relies on the existence of resolutions of
singularities, which we avoid by using Gabber's weak local uniformization
theorem \ref{thm:gabberunif}.
As far as we are aware, the idea to use alterations instead of resolutions of
singularities first appeared in \cite[Rem.\ 6.7]{Has01}, where Hashimoto proves
Grothendieck's localization problem \ref{prob:grothlocprob} for $F$-rationality
when the base ring $A$
is essentially of finite type over a field of positive characteristic
using de Jong's alteration theorem \cite[Thm.\ 4.1]{dJ96}.
When $\bR =$ Cohen--Macaulay or $(S_n)$, one can use Kawasaki's
Macaulayfication theorem \cite[Thm.\ 1.1]{Kaw02} to prove Theorem
\ref{thm:grothlocprobqe}; see
\citeleft\citen{BI84}\citemid Prop.\ 3.1\citepunct \citen{Ion08}\citemid Thm.\
4.1 and Rem.\ 4.2\citeright.\medskip
\par Our strategy will be to ultimately reduce to the following version of
Problem \ref{prob:grothlocprob} for regular bases.
This statement corresponds to $(\ref{strategy:regular})$ in \S\ref{sect:intro}.
\begin{citedlem}[{\cite[Lem.\ 7.5.1.1]{EGAIV2}}]\label{lem:egaiv7511}
  Fix a full subcategory $\sC$ of the category of noetherian local rings that
  is stable under quotients.
  Let $\bR$ be a property of noetherian local rings satisfying
  $(\ref{cond:deforms})$, and consider a flat local homomorphism $\varphi\colon
  (C,\fm,k) \to (D,\fn,l)$ of noetherian local rings in $\sC$, where $C$ is
  regular.
  If $D \otimes_C k$ satisfies $\bR$, then $D$ satisfies $\bR$.
\end{citedlem}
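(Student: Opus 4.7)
The plan is to induct on $d \coloneqq \dim C$, using the lifting hypothesis $(\hyperref[cond:deforms]{\textup{R}_{\textup{IV}}^\sC})$ to peel off one element of a regular system of parameters at a time.

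For the base case $d = 0$, the regular local ring $C$ is a field, so $C = k$ and the natural map $D \to D \otimes_C k$ is an isomorphism; the hypothesis on the closed fiber is then exactly the conclusion.

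For the inductive step $d \geq 1$, I would choose an element $t \in \fm$ that is part of a regular system of parameters of $C$, so that $t$ is a nonzerodivisor in $C$ and $C/tC$ is regular of dimension $d - 1$. Since $\varphi$ is flat, $\varphi(t) \in \fn$ remains a nonzerodivisor in $D$, and the base change $\bar\varphi \colon C/tC \to D/tD$ is a flat local homomorphism whose source and target still lie in $\sC$ by stability under quotients. Its closed fiber is canonically identified with $D \otimes_C k$, which satisfies $\bR$ by assumption, so the inductive hypothesis applied to $\bar\varphi$ yields that $D/tD$ satisfies $\bR$. Finally, $(\hyperref[cond:deforms]{\textup{R}_{\textup{IV}}^\sC})$ applied to the nonzerodivisor $\varphi(t) \in \fn$ of $D$ gives that $D$ itself satisfies $\bR$.

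There is no real obstacle in this argument: its entire substantive content is the deformation hypothesis $(\hyperref[cond:deforms]{\textup{R}_{\textup{IV}}^\sC})$, and the auxiliary verifications—that a regular parameter of $C$ lifts to a nonzerodivisor in $D$ via flatness, that $C/tC$ is regular of strictly smaller dimension, and that $\sC$ is closed under the quotients used—are all immediate consequences of the assumptions in the statement.
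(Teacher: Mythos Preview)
Your proposal is correct and is precisely the standard argument from \cite[Lem.\ 7.5.1.1]{EGAIV2}: the paper does not supply its own proof of this lemma but simply cites EGA, and the EGA proof is exactly the induction on $\dim C$ that you describe, peeling off one regular parameter at a time via flatness and invoking $(\hyperref[cond:deforms]{\textup{R}_{\textup{IV}}^\sC})$.
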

\par We now prove Theorem \ref{thm:grothlocprobqe}.
\begin{proof}[Proof of Theorem \ref{thm:grothlocprobqe}]
  We want to show that for every prime ideal $\fp \subseteq A$, the
  $\kappa(\fp)$-algebra $B \otimes_A \kappa(\fp)$ is geometrically $\bR$ over
  $\kappa(\fp)$.
  By noetherian induction, it suffices to show that if $\fp_0 \subseteq A$ is a
  prime ideal and $B \otimes_A \kappa(\fp)$ is geometrically $\bR$ over
  $\kappa(\fp)$ for every prime ideal $\fp \supsetneq \fp_0$, then
  $B \otimes_A \kappa(\fp_0)$ is geometrically $\bR$ over $\kappa(\fp_0)$.
  Replacing $A$ by $A/\fp_0$ and $B$ by $B/\fp_0 B$, we may therefore assume
  that $A$ is a domain and that $B \otimes_A \kappa(\fp)$ is geometrically
  $\bR$ over $\kappa(\fp)$ for every nonzero prime ideal $\fp \subseteq A$.
  We note that $A/\fp_0$ is quasi-excellent by \cite[Prop.\
  7.3.15$(i)$]{EGAIV2},
  and that $B/\fp_0 B$ appears in $\sC$ by $(\ref{thm:qecatcond})$.
  \par We now reduce to showing that the local rings of $B \otimes_A K$ satisfy
  $\bR$, where $K \coloneqq \Frac(A)$.
  We want to show that for every
  finite field extension $K \subseteq K'$, the local rings of $B \otimes_A K'$
  satisfy $\bR$.
  Let $A \subseteq A'$ be a module-finite extension such that $A'$ is a domain
  and such that $K' = \Frac(A')$.
  Then, $A'$ is a semi-local ring, and setting $B' \coloneqq B \otimes_A A'$, we
  have $B \otimes_A K' \simeq B' \otimes_{A'} K'$.
  Note that every local ring of $B \otimes_A K'$ is a local ring of $B'_\fq
  \otimes_{A'_\fp} K'$ for some prime ideals $\fp \subseteq A'$ and $\fq
  \subseteq B'$.
  Moreover, denoting by $k'$ the residue field of $A'_\fp$, we have
  \[
    B \otimes_A k' \simeq B \otimes_A A' \otimes_{A'} k' \simeq B'
    \otimes_{A'} k'.
  \]
  Thus, $B'_\fq \otimes_{A'_\fp} k'$ is a localization of $B \otimes_A k'$,
  and hence $B'_\fq \otimes_{A'_\fp} k'$ is geometrically $\bR$ over $k'$.
  We may therefore replace $A$ by $A'_\fp$, $B$ by $B'_\fq$, and
  $K$ by $K'$, in which case it suffices to show that the local rings of
  $B \otimes_A K$ satisfy $\bR$.
  We note that $A'_\fp$ is quasi-excellent by \cite[Prop.\
  7.3.15$(i)$ and Thm.\ 7.7.2]{EGAIV2},
  and that $B'_\fq$ appears in $\sC$ by $(\ref{thm:qecatcond})$.
  \par We now apply Gabber's weak local uniformization theorem
  \ref{thm:gabberunif}.
  Since $X\coloneqq \Spec(A)$ is quasi-excellent by $(\ref{thm:qecond})$,
  Theorem \ref{thm:gabberunif} implies there exists a finite covering
  $\{f_i\colon Y_i \to X\}_{i=1}^m$ of $X$ in the alteration topology, where
  $Y_i$ is regular and integral for every $i$.
  Note that $X$ is irreducible by our reduction in the first paragraph and by
  construction of $A'$ in the previous paragraph.
  Thus, Theorem \ref{thm:ksdefaltcover} implies there exists a proper surjective
  morphism $\pi\colon V \to X$ and a Zariski open covering $V =
  \bigcup_{i=1}^m V_i$ fitting into a commutative diagram
  \[
    \begin{tikzcd}
      V_i \rar[hook]\dar[swap]{h_i} & V\dar{\pi}\\
      Y_i \rar{f_i} & X
    \end{tikzcd}
  \]
  for every $i$.
  By base change along the morphism $\Spec(\varphi)\colon X' \to X$, where $X'
  \coloneqq \Spec(B)$, we obtain the commutative diagram
  \begin{equation}\label{eq:appgabberunif}
    \begin{tikzcd}
      V_i' \rar[hook]\dar[swap]{h_i'} & V'\dar{\pi'}\\
      Y'_i \rar{f'_i}\dar[swap]{g_i} & X'\dar{\mathrlap{\Spec(\varphi)}}\\
      Y_i \rar{f_i} & X
    \end{tikzcd}
  \end{equation}
  with cartesian squares for every $i$.
  We now consider a point $\eta' \in X'$ lying over the generic point of $X$.
  We want to show that $\cO_{X',\eta'}$ satisfies $\bR$.
  Since $\pi'$ is surjective, there exists a point $\xi' \in V'$ such that
  $\pi'(\xi') = \eta'$.
  Since $\pi'$ is closed, the specialization $\eta' \rightsquigarrow \fn$ in $X'
  = \Spec(B)$ then lifts to a specialization $\xi' \rightsquigarrow v'$ in $V'$
  \cite[\href{https://stacks.math.columbia.edu/tag/0066}{Tag
  0066}]{stacks-project}.
  Since $V' = \bigcup_{i=1}^m V'_i$ is a Zariski open covering, there exists an
  index $i_0 \in \{1,2,\ldots,m\}$ such that $v' \in V_{i_0}'$, and since open
  sets are stable under generization, we have $\xi' \in V_{i_0}'$ as well.
  We claim that it suffices to show that $\cO_{Y_{i_0}',h_{i_0}'(\xi')}$
  satisfies $\bR$.
  Since the morphism $f_{i_0}$ is maximally dominating, base changing the bottom
  square in \eqref{eq:appgabberunif} for $i = i_0$ along the morphism $\Spec(K)
  \to X = \Spec(A)$, localizing at the generic point of $Y_{i_0}$,
  and taking global sections yields the cocartesian square
  \[
    \begin{tikzcd}
      B \otimes_A L_{i_0} & \lar B \otimes_A K\\
      L_{i_0} \uar & \lar[hook'] K \uar
    \end{tikzcd}
  \]
  of rings, where $L_{i_0}$ is the function field of $Y_{i_0}$.
  The bottom horizontal arrow is faithfully flat; thus, the top horizontal
  arrow is also faithfully flat by base change.
  After localizing, we therefore obtain a faithfully flat homomorphism
  \[
    \cO_{X',\eta'} \simeq (B \otimes_A K)_{\eta'} \longrightarrow (B \otimes_A
    L_{i_0})_{h_{i_0}'(\xi')} \simeq \cO_{Y_{i_0}',h_{i_0}'(\xi')},
  \]
  and $(\ref{cond:descent})$ implies that if $\cO_{Y_{i_0}',h_{i_0}'(\xi')}$
  satisfies $\bR$, then $\cO_{X',\eta'}$ satisfies $\bR$.
  Here we use the fact that $(B \otimes_A L_{i_0})_{h_{i_0}'(\xi')}$ is in $\sC$
  by $(\ref{thm:qecatcond})$.
  \par It remains to show that $\cO_{Y_{i_0}',h_{i_0}'(\xi')}$ satisfies $\bR$.
  Setting $y' \coloneqq h_{i_0}'(v')$ and $y \coloneqq g_{i_0}(y')$, the residue
  field
  extension $k \subseteq \kappa(y)$ is finitely generated since $f_{i_0}$ is of
  finite type.
  Thus, the closed fiber of the flat homomorphism $\psi$ in the commutative
  diagram
  \[
    \begin{tikzcd}
      \cO_{Y_{i_0}',y'} & \lar B\\
      \cO_{Y_{i_0},y} \uar{\psi} & \lar A \uar[swap]{\varphi}
    \end{tikzcd}
  \]
  is geometrically $\bR$ by $(\ref{cond:fgfieldext})$, since $B \otimes_A k$ is
  in $\sC$ by $(\ref{thm:qecatcond})$.
  Since $\cO_{Y_{i_0},y}$ is regular by construction and $\cO_{Y'_{i_0},y'}$
  appears in
  $\sC$ by $(\ref{thm:qecatcond})$, we can apply Lemma \ref{lem:egaiv7511}
  (which uses $(\ref{cond:deforms})$) to deduce that $\cO_{Y'_{i_0},y'}$
  satisfies
  $\bR$.
  Finally, $(\ref{cond:generizes})$ implies that $\cO_{Y_{i_0}',h_{i_0}'(\xi')}$
  satisfies $\bR$, since the specialization $\xi' \rightsquigarrow v'$ in $V'$
  maps to the specialization $h_{i_0}'(\xi') \rightsquigarrow h_{i_0}'(v') = y'$
  by continuity.
\end{proof}
\subsection{Problem \ref{prob:grothlocprob} in general and the proof of Theorem
\ref{thm:grothlocprob}}
\label{sect:grothlocprobpring}
We now prove Theorem \ref{thm:grothlocprob} by reducing to the complete (hence
quasi-excellent) case proved in Theorem \ref{thm:grothlocprobqe}.
This step corresponds to $(\ref{strategy:complete})$ in \S\ref{sect:intro}.
\par We first show the following stronger statement that is more specific
about what conditions from Conditions \ref{cond:listof} are needed.
\begin{corollary}[cf.\ {\citeleft\citen{Mar84}\citemid Thm.\ 2.2\citepunct
  \citen{BI84}\citemid Prop.\ 1.2\citeright}]\label{thm:grothlocprobpring}
  Fix a full subcategory $\sC$ of the category of noetherian rings that is
  stable under homomorphisms essentially of finite type.
  Let $\bR$ be a property of noetherian local rings, and consider a flat local
  homomorphism $\varphi\colon (A,\fm,k) \to (B,\fn,l)$ of
  noetherian local rings.
  Assume the following:
  \begin{enumerate}[label=$(\roman*)$,ref=\roman*]
    \item\label{thm:pringpcond} The ring $A$ has geometrically $\bR$ formal
      fibers;
    \item\label{thm:pringcatcond} The rings $\widehat{A}$ and $B^*$ appear in
      $\sC$, where $\widehat{A}$ and $B^*$ denote the $\fm$-adic completions of
      $A$ and $B$, respectively; and
    \item\label{thm:pringrconds} The property $\bR$ satisfies
      $(\ref{cond:descent})$, $(\ref{cond:deforms})$, $(\ref{cond:generizes})$,
      $(\ref{cond:transitivity})$,
      and $(\ref{cond:fgfieldext})$.
  \end{enumerate}
  If the closed fiber of $\varphi$ is geometrically $\bR$, then all fibers of
  $\varphi$ are geometrically $\bR$.
\end{corollary}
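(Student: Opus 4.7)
The plan is to reduce Corollary \ref{thm:grothlocprobpring} to the quasi-excellent case established in Theorem \ref{thm:grothlocprobqe} by passing to $\fm$-adic completions, then to transfer the conclusion back to $\varphi$ via transitivity and faithfully flat descent. This carries out step $(\ref{strategy:complete})$ of \S\ref{sect:intro}.

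First, I would form the $\fm$-adic completions $\widehat{A}$ and $B^* \simeq B \otimes_A \widehat{A}$ and the induced flat local homomorphism $\widehat{\varphi}\colon \widehat{A} \to B^*$. The natural isomorphism $B^* \otimes_{\widehat{A}} k \simeq B \otimes_A k$ shows that the closed fiber of $\widehat{\varphi}$ coincides with that of $\varphi$, hence is geometrically $\bR$. Since every complete noetherian local ring is excellent and therefore quasi-excellent, $\widehat{A}$ satisfies hypothesis $(\ref{thm:qecond})$ of Theorem \ref{thm:grothlocprobqe}; combined with $B^* \in \sC$ from $(\ref{thm:pringcatcond})$ and the conditions on $\bR$ coming from $(\ref{thm:pringrconds})$, Theorem \ref{thm:grothlocprobqe} produces that $\widehat{\varphi}$ is geometrically $\bR$.

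Next, I would compose with the completion map $A \to \widehat{A}$, which is geometrically $\bR$ by hypothesis $(\ref{thm:pringpcond})$. Because both $\widehat{A}$ and $B^*$ lie in $\sC$ by $(\ref{thm:pringcatcond})$, the transitivity condition $(\hyperref[cond:transitivity]{\textup{P}'_{\textup{I}}})$ from $(\ref{thm:pringrconds})$ yields that the composite $A \to \widehat{A} \to B^*$ is geometrically $\bR$.

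Finally, I would descend along the faithfully flat completion $B \to B^*$. Since $B^* \in \sC$ and $\bR$ satisfies $(\ref{cond:descent})$ from $(\ref{thm:pringrconds})$, Lemma \ref{lem:egariimpliespj}$(\ref{lem:egar2impliesp2})$ gives condition $(\hyperref[cond:descentphom]{\textup{P}_{\textup{II}}})$, and applying it to the factorization $A \to B \to B^*$ forces $\varphi$ itself to be geometrically $\bR$. With Theorem \ref{thm:grothlocprobqe} already at our disposal, no substantive obstacle remains: the argument amounts to verifying that the closed fiber survives base change to $\widehat{A}$ and that the categorical hypotheses propagate through completion, both of which follow immediately from the standing assumptions.
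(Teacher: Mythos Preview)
Your argument follows the paper's proof essentially verbatim: complete along $\fm$, apply Theorem \ref{thm:grothlocprobqe} to $\widehat{A} \to B^*$, then use $(\ref{cond:transitivity})$ and $(\ref{cond:descentphom})$ (via Lemma \ref{lem:egariimpliespj}$(\ref{lem:egar2impliesp2})$) to descend to $\varphi$.

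One technical caveat: the identification $B^* \simeq B \otimes_A \widehat{A}$ that you invoke is not valid in general, since $B$ need not be essentially of finite type over $A$ and the tensor product $B \otimes_A \widehat{A}$ can fail to be noetherian or $\fm$-adically complete. The paper avoids this by citing \cite[Ch.\ 0, Lem.\ 6.8.3.1]{EGAInew} directly for the facts that $B^*$ is noetherian local and that $B^* \otimes_{\widehat{A}} k \simeq B \otimes_A k$; flatness of $\varphi^* \colon \widehat{A} \to B^*$ then follows from the local flatness criterion rather than from base change. Since you only use these consequences, your argument goes through once you replace the tensor-product description of $B^*$ with the correct citation.
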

\begin{proof}
  We have the commutative square
  \[
    \begin{tikzcd}
      A \rar{\varphi}\dar[swap]{\sigma} & B\dar{\tau}\\
      \widehat{A} \rar{\varphi^*} & B^*
    \end{tikzcd}
  \]
  where $\sigma$ and $\tau$ are the canonical $\fm$-adic completion
  homomorphisms.
  By $(\ref{thm:pringpcond})$, the homomorphism $\sigma$ is geometrically $\bR$.
  We claim that $\varphi^*$ is geometrically $\bR$.
  By \cite[Ch.\ 0, Lem.\ 6.8.3.1]{EGAInew}, the ring $B^*$ is a noetherian local
  ring, and we have
  \[
    B^* \otimes_{\widehat{A}} k \simeq B \otimes_A k.
  \]
  Theorem \ref{thm:grothlocprobqe} therefore implies that
  $\varphi^*$ is geometrically $\bR$, where we use the fact that the
  complete local ring $\widehat{A}$ is excellent by \cite[Sch.\
  7.8.3$(iii)$]{EGAIV2}.
  The composition $\tau \circ \varphi =
  \varphi^* \circ \sigma$ is geometrically $\bR$ by
  $(\ref{cond:transitivity})$, and therefore $\varphi$ is also by
  $(\ref{cond:descentphom})$ (which holds by $(\ref{cond:descent})$ and Lemma
  \ref{lem:egariimpliespj}$(\ref{lem:egar2impliesp2})$) since $\tau$ is
  faithfully flat \cite[Thm.\ 8.14]{Mat89}.
\end{proof}
We now deduce Theorem \ref{thm:grothlocprob} as a consequence.
\begin{customthm}{B}\label{thm:grothlocprob}
  Grothendieck's localization problem \ref{prob:grothlocprob} holds for
  properties $\bR$ of noetherian local rings satisfying
  $(\hyperref[cond:ascentphom]{\textup{R}'_{\textup{I}}})$,
  $(\hyperref[cond:descent]{\textup{R}_{\textup{II}}})$,
  $(\hyperref[cond:deforms]{\textup{R}_{\textup{IV}}})$, and
  $(\hyperref[cond:generizes]{\textup{R}_{\textup{V}}})$, such that regular
  local rings satisfy $\bR$.
\end{customthm}
\begin{proof}
  It suffices to show that the hypotheses in Theorem \ref{thm:grothlocprob}
  imply those in Corollary \ref{thm:grothlocprobpring} when $\sC$ is the
  entire category of noetherian rings.
  Note that $(\ref{thm:pringpcond})$ is already a hypothesis in Theorem
  \ref{thm:grothlocprob} and that $(\ref{thm:pringcatcond})$ is vacuously true.
  It therefore suffices to note that
  $(\hyperref[cond:ascentphom]{\textup{R}'_{\textup{I}}})$ implies
  $(\hyperref[cond:transitivity]{\textup{P}'_{\textup{I}}})$
  by Lemma \ref{lem:egariimpliespj}$(\ref{lem:egar1impliesp1})$,
  and that
  $(\hyperref[cond:ascentreghom]{\textup{R}_{\textup{I}}})$ and
  $(\hyperref[cond:descent]{\textup{R}_{\textup{II}}})$ imply
  $(\hyperref[cond:fgfieldext]{\textup{P}_{\textup{IV}}})$
  by Lemma \ref{lem:egariimpliespj}$(\ref{lem:egar1impliesp4})$.
  Here,
  $(\hyperref[cond:ascentreghom]{\textup{R}_{\textup{I}}})$
  holds by
  $(\hyperref[cond:ascentphom]{\textup{R}'_{\textup{I}}})$
  and the assumption that regular local rings satisfy $\bR$.
\end{proof}
We can also prove a version of Corollary \ref{thm:grothlocprobpring} for a
specific choice of the category $\sC$, as long as we put an extra condition
on $B \otimes_A k$.
\begin{corollary}[cf.\ {\citeleft\citen{EGAIV2}\citemid Cor.\ 7.5.2\citepunct
  \citen{BI84}\citemid Prop.\ 1.2\citeright}]
  \label{thm:grothlocprobbothpring}
  Denote by $\sC$ the smallest full subcategory of the category of noetherian
  rings containing noetherian complete local rings that is stable under
  homomorphisms essentially of finite type.
  Let $\bR$ be a property of noetherian local rings, and consider a flat local
  homomorphism $\varphi\colon (A,\fm,k) \to (B,\fn,l)$ of
  noetherian local rings.
  Assume the following:
  \begin{enumerate}[label=$(\roman*)$,ref=\roman*]
    \item\label{thm:bothpringpcond}
      The rings $A$ and $B \otimes_A k$ have geometrically $\bR$ formal fibers;
      and
    \item\label{thm:bothpringrconds} The property $\bR$ satisfies
      $(\hyperref[cond:ascentphom]{\textup{R}'_{\textup{I}}})$,
      $(\ref{cond:descent})$, $(\ref{cond:deforms})$, $(\ref{cond:generizes})$,
      and $(\ref{cond:fgfieldext})$.
  \end{enumerate}
  If the closed fiber of $\varphi$ is geometrically $\bR$, then all fibers of
  $\varphi$ are geometrically $\bR$.
\end{corollary}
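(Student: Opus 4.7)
The plan is to reduce to Corollary~\ref{thm:grothlocprobpring} applied to the homomorphism $A \to \widehat{B}$, where $\widehat{B}$ denotes the $\fn$-adic completion of $B$. This reformulation is attractive because $\widehat{B}$ is a complete local ring, hence trivially in $\sC$; moreover, a noetherian local ring complete in its maximal-adic topology is $J$-adically complete for every ideal $J$ contained in its maximal ideal, since $J^k$ is then maximal-adically closed by Krull's intersection theorem. It follows that the $\fm$-adic completion $(\widehat{B})^*$ coincides with $\widehat{B}$ itself and so also lies in $\sC$. Hence the only remaining hypothesis to verify in order to invoke Corollary~\ref{thm:grothlocprobpring} is that the closed fiber $\widehat{B}/\fm\widehat{B} = \widehat{C}$ is geometrically $\bR$ over $k$, where $C = B \otimes_A k$ and the hat denotes completion with respect to the maximal ideal.

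I would establish that $\widehat{C}$ is geometrically $\bR$ over $k$ pointwise, by verifying that every local ring of $\widehat{C} \otimes_k k' \simeq \widehat{C \otimes_k k'}$ satisfies $\bR$ for every finite extension $k \subseteq k'$. Each such local ring is a formal completion $(C \otimes_k k')_\fp^\wedge$ at some maximal ideal $\fp$ of $C \otimes_k k'$. The local ring $(C \otimes_k k')_\fp$ satisfies $\bR$ because $C$ is geometrically $\bR$ over $k$ by the closed-fiber hypothesis on $\varphi$. The formal completion map $(C \otimes_k k')_\fp \to (C \otimes_k k')_\fp^\wedge$ is geometrically $\bR$ as a consequence of hypothesis~$(\ref{thm:bothpringpcond})$ on the formal fibers of $C$: the map $C \to \widehat{C}$ is geometrically $\bR$, Lemma~\ref{lem:egariimpliespj}$(\ref{lem:p4implieseftbasechange})$ preserves this under the essentially-of-finite-type base change to $C \otimes_k k'$, and localization at $\fp$ preserves it again. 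Applying $(\hyperref[cond:ascentphom]{\textup{R}'_{\textup{I}}})$ directly to this completion map then yields that $(C \otimes_k k')_\fp^\wedge$ satisfies $\bR$, so $\widehat{C}$ is geometrically $\bR$ over $k$. With this established, Corollary~\ref{thm:grothlocprobpring} shows that $A \to \widehat{B}$ is geometrically $\bR$; since $B \to \widehat{B}$ is faithfully flat and $\widehat{B} \in \sC$, descent via $(\hyperref[cond:descentphom]{\textup{P}_{\textup{II}}})$---obtained from $(\hyperref[cond:descent]{\textup{R}_{\textup{II}}})$ by Lemma~\ref{lem:egariimpliespj}$(\ref{lem:egar2impliesp2})$---applied to the composition $A \to B \to \widehat{B}$ yields that $\varphi$ itself is geometrically $\bR$, as required.

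The main delicate point is the ascent step in the pointwise argument. It is essential to invoke $(\hyperref[cond:ascentphom]{\textup{R}'_{\textup{I}}})$ in its full generality rather than its transitivity consequence $(\hyperref[cond:transitivity]{\textup{P}'_{\textup{I}}})$ applied naively to the factorization $k \to C \to \widehat{C}$. The transitivity statement, obtained via Lemma~\ref{lem:egariimpliespj}$(\ref{lem:egar1impliesp1})$, carries a $\sC$-membership hypothesis on the intermediate ring, and the intermediate ring $C = B \otimes_A k$ is only known to be a noetherian local ring with geometrically $\bR$ formal fibers---it may not lie in the restricted category $\sC$ of Corollary~\ref{thm:grothlocprobbothpring}. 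In contrast, $(\hyperref[cond:ascentphom]{\textup{R}'_{\textup{I}}})$ is stated for arbitrary noetherian local rings and applies to each completion map $(C \otimes_k k')_\fp \to (C \otimes_k k')_\fp^\wedge$ regardless of whether $C \otimes_k k'$ or its localizations belong to $\sC$; combined with the identification $(\widehat{B})^* = \widehat{B}$ noted above, this is what makes the reduction to Corollary~\ref{thm:grothlocprobpring} go through despite the restricted shape of $\sC$.
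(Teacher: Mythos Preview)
Your argument is correct and follows essentially the same strategy as the paper, just routed through Corollary~\ref{thm:grothlocprobpring} rather than directly through Theorem~\ref{thm:grothlocprobqe}: the paper forms the square with $\widehat{A}$ and $\widehat{B}$, verifies that the closed fiber $\widehat{B}\otimes_{\widehat{A}}k\simeq\widehat{C}$ of $\widehat{\varphi}$ is geometrically $\bR$, applies Theorem~\ref{thm:grothlocprobqe} to $\widehat{\varphi}$, composes with $\sigma$, and descends along $\tau$. Your ``delicate point'' is dispatched more quickly in the paper: since hypothesis~$(\ref{thm:bothpringrconds})$ assumes $(\textup{R}'_{\textup{I}})$ for \emph{all} noetherian local rings (note the absence of the $\sC$-superscript), Lemma~\ref{lem:egariimpliespj}$(\ref{lem:egar1impliesp1})$ yields $(\textup{P}'_{\textup{I}})$ for all noetherian rings, which one then applies directly to the composition $k \to C \to \widehat{C}$---this makes your pointwise unpacking via $(\textup{R}'_{\textup{I}})$ and Lemma~\ref{lem:egariimpliespj}$(\ref{lem:p4implieseftbasechange})$ unnecessary, though of course not wrong.
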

\begin{proof}
  We have the commutative square
  \[
    \begin{tikzcd}
      A \rar{\varphi}\dar[swap]{\sigma} & B\dar{\tau}\\
      \widehat{A} \rar{\widehat{\varphi}} & \widehat{B}
    \end{tikzcd}
  \]
  where $\sigma$ and $\tau$ are the canonical $\fm$-adic and $\fn$-adic
  completion homomorphisms, respectively.
  By $(\ref{thm:bothpringpcond})$, the homomorphism $\sigma$ is geometrically
  $\bR$.
  We claim that $\widehat{\varphi}$ is geometrically $\bR$.
  Note that $\widehat{\varphi}$ is flat by \cite[Thm.\ 22.4$(i)$]{Mat89}, and
  that $B \otimes_A k$ has geometrically $\bR$ formal fibers by
  $(\ref{thm:bothpringpcond})$.
  Thus, the composition
  \[
    k \longrightarrow B \otimes_A k \longrightarrow
    \widehat{B} \otimes_{\widehat{A}} k
  \]
  is geometrically $\bR$ by applying
  $(\hyperref[cond:transitivity]{\textup{P}'_{\textup{I}}})$ (which holds
  by $(\hyperref[cond:ascentphom]{\textup{R}'_{\textup{I}}})$
  and Lemma
  \ref{lem:egariimpliespj}$(\ref{lem:egar1impliesp1})$).
  Since this composition is equal to $\widehat{\varphi} \otimes_{\widehat{A}}
  \id_k$, Theorem \ref{thm:grothlocprobqe} implies that $\widehat{\varphi}$ is
  geometrically $\bR$, where we use
  the fact that the complete local ring $\widehat{A}$ is excellent by
  \cite[Sch.\ 7.8.3$(iii)$]{EGAIV2}.
  \par The composition $\tau \circ \varphi =
  \varphi^* \circ \sigma$ is geometrically $\bR$ by
  $(\hyperref[cond:transitivity]{\textup{P}'_{\textup{I}}})$, and therefore
  $\varphi$ is also by
  $(\ref{cond:descentphom})$ (which holds by $(\ref{cond:descent})$ and Lemma
  \ref{lem:egariimpliespj}$(\ref{lem:egar2impliesp2})$) since $\tau$ is
  faithfully flat \cite[Thm.\ 8.14]{Mat89}.
\end{proof}
\subsection{Global applications and the proof of Theorem
\ref{thm:grothlocprobglobalintro}}\label{sect:globalapps}
We now prove Theorem \ref{thm:grothlocprobglobalintro} by reducing to the local
statements proved above.
This step corresponds to $(\ref{strategy:localq})$ in \S\ref{sect:intro}.
\par We first give global versions of Theorem \ref{thm:grothlocprobqe} and
Corollaries \ref{thm:grothlocprobpring} and \ref{thm:grothlocprobbothpring}.
Theorem
\ref{thm:grothlocprobglobalintro}$(\ref{thm:grothlocprobglobalintroclosedfib})$
will be deduced from $(\ref{thm:grothlocprobpringglobal})$ below.
These results are related to a theorem of Shimomoto
\cite[Main Thm.\ 1]{Shi17}, which applies to morphisms of finite type between
excellent noetherian schemes.
\begin{proposition}[cf.\ {\cite[Main Thm.\ 1]{Shi17}}]\label{thm:grothlocprobglobal}
  Fix a full subcategory
  $\sC$ of the category of noetherian rings that is stable under homomorphisms
  essentially of finite type.
  Let $\bR$ be a property of noetherian local rings, and
  consider a flat morphism $f\colon Y \to X$ of locally noetherian
  schemes mapping closed points to closed points.
  Assume one of the following:
  \begin{enumerate}[label=$(\roman*)$,ref=\roman*]
    \item $\bR$ satisfies the hypotheses of Theorem \ref{thm:grothlocprobqe} for
      the category $\sC$, the local rings of $X$ at closed points are
      quasi-excellent, and the local rings of $Y$
      at closed points appear in $\sC$;\label{thm:grothlocprobqeglobal}
    \item $\bR$ satisfies the hypotheses of Corollary \ref{thm:grothlocprobpring}
      for the category $\sC$, the local rings of $X$ at closed points have
      geometrically $\bR$ formal fibers, and the rings $\widehat{\cO}_{X,f(y)}$
      and $\cO_{Y,y}^*$ appear in $\sC$ for every closed point $y \in Y$, where
      $\cO_{Y,y}^*$ denotes the
      $\fm_{f(y)}$-adic completion of $\cO_{Y,y}$;
      or\label{thm:grothlocprobpringglobal}
    \item $\bR$ satisfies the hypotheses of Corollary
      \ref{thm:grothlocprobbothpring} for the specific choice of $\sC$ therein,
      the local rings of $X$ at closed points have geometrically $\bR$ formal
      fibers,
      and the local rings $\cO_{f^{-1}(x),y}$
      have geometrically $\bR$ formal
      fibers for every closed point $x \in X$ and every closed
      point $y \in f^{-1}(x)$.\label{thm:grothlocprobbothpringglobal}
  \end{enumerate}
  If every closed fiber of $f$ is geometrically $\bR$, then all fibers of $f$
  are geometrically $\bR$.
\end{proposition}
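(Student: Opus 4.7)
The plan is to follow Strategy $(\ref{strategy:localq})$ from \S\ref{sect:intro}: deduce each of $(\ref{thm:grothlocprobqeglobal})$--$(\ref{thm:grothlocprobbothpringglobal})$ from its local counterpart (Theorem \ref{thm:grothlocprobqe}, Corollary \ref{thm:grothlocprobpring}, and Corollary \ref{thm:grothlocprobbothpring}, respectively) by replacing $X$ and $Y$ with the spectra of local rings at appropriately chosen closed points. Fix $x \in X$; since being geometrically $\bR$ is local on the fiber, it is enough to show that for every $y \in f^{-1}(x)$, the local ring $\cO_{f^{-1}(x),y}$ is geometrically $\bR$ over $\kappa(x)$.

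First I would choose a closed specialization $y_0 \in \overline{\{y\}}$ of $y$, using that every point of a locally noetherian scheme admits such a specialization. The hypothesis that $f$ maps closed points to closed points then guarantees that $x_0 \coloneqq f(y_0)$ is closed in $X$, and flatness of $f$ yields a flat local homomorphism
\[
    \varphi_0 \colon \cO_{X,x_0} \longrightarrow \cO_{Y,y_0}
\]
whose closed fiber is $\cO_{f^{-1}(x_0),y_0}$. Because $f^{-1}(x_0)$ is geometrically $\bR$ over $\kappa(x_0)$ by hypothesis, and every local ring of $\cO_{f^{-1}(x_0),y_0} \otimes_{\kappa(x_0)} k'$ is a local ring of $f^{-1}(x_0) \otimes_{\kappa(x_0)} k'$ for any finite extension $k'/\kappa(x_0)$, the closed fiber of $\varphi_0$ is itself geometrically $\bR$ over $\kappa(x_0)$.

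The remaining hypotheses needed to invoke the relevant local result on $\varphi_0$ are then supplied directly by the corresponding case: quasi-excellence of $\cO_{X,x_0}$ together with $\cO_{Y,y_0} \in \sC$ in $(\ref{thm:grothlocprobqeglobal})$; the parallel formal-fiber and $\sC$-membership conditions on $\widehat{\cO}_{X,x_0}$ and $\cO_{Y,y_0}^{*}$ in $(\ref{thm:grothlocprobpringglobal})$; and the additional formal-fiber hypothesis on $\cO_{f^{-1}(x_0),y_0}$ in $(\ref{thm:grothlocprobbothpringglobal})$. Thus every fiber of $\varphi_0$ is geometrically $\bR$. Since $x_0 \in \overline{\{x\}}$, the point $x$ corresponds to a prime $\fp \subseteq \cO_{X,x_0}$ with $\kappa(\fp) = \kappa(x)$, and since $y \rightsquigarrow y_0$ with $f(y) = x$, the point $y$ corresponds to a prime of the fiber $\cO_{Y,y_0} \otimes_{\cO_{X,x_0}} \kappa(x)$. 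Localizing at that prime recovers $\cO_{f^{-1}(x),y}$, which is therefore geometrically $\bR$ over $\kappa(x)$, as desired.

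The hard part is not the reasoning itself---which is largely a uniform dispatch to the three local results---but the careful bookkeeping: confirming the existence of a closed specialization $y_0 \in \overline{\{y\}}$ in a locally noetherian scheme, the closedness of $x_0 = f(y_0)$, the scheme-theoretic identification of the closed fiber of $\varphi_0$ with a local ring of the closed fiber of $f$, and the verification that in each case $(\ref{thm:grothlocprobqeglobal})$--$(\ref{thm:grothlocprobbothpringglobal})$ the precise hypotheses of the corresponding local theorem are available.
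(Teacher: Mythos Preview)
Your proposal is correct and follows essentially the same approach as the paper: pick an arbitrary point, specialize to a closed point of $Y$ (using that locally noetherian schemes have closed points in every closure), push it down to a closed point of $X$ via the hypothesis on $f$, and apply the appropriate local result (Theorem \ref{thm:grothlocprobqe}, Corollary \ref{thm:grothlocprobpring}, or Corollary \ref{thm:grothlocprobbothpring}) to the resulting flat local homomorphism. Your write-up is in fact slightly more explicit than the paper's about the fiber identifications, but the argument is the same.
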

\begin{proof}
  Let $\xi \in Y$ be an arbitrary point, and let $\eta = f(\xi)$.
  We want to show that $\cO_{Y,\xi}$ is geometrically $\bR$ over $\kappa(\eta)$.
  By \cite[\href{https://stacks.math.columbia.edu/tag/02IL}{Tag
  02IL}]{stacks-project}, the point $\xi$ specializes to a closed point $y \in
  Y$.
  Since $f$ maps closed points to closed points, the point $x = f(y)$ is closed
  in $X$.
  After localization, we then obtain a flat local homomorphism
  \[
    \varphi\colon \cO_{X,x} \longrightarrow \cO_{Y,y}
  \]
  whose closed fiber is geometrically $\bR$ by assumption.
  Finally, we can apply Theorem \ref{thm:grothlocprobqe}, Corollary
  \ref{thm:grothlocprobpring}, and Corollary \ref{thm:grothlocprobbothpring} to
  the homomorphism $\varphi$ under the assumptions in
  $(\ref{thm:grothlocprobqeglobal})$, $(\ref{thm:grothlocprobpringglobal})$, and
  $(\ref{thm:grothlocprobbothpringglobal})$, respectively, to conclude that
  $\cO_{Y,\xi}$ is geometrically $\bR$ over $\kappa(\eta)$.
\end{proof}
\par We also prove that the locus over which $f$ has geometrically $\bR$
fibers is stable under generization for closed flat morphisms.
Theorem
\ref{thm:grothlocprobglobalintro}$(\ref{thm:grothlocprobglobalintrogenerizes})$
will be deduced from $(\ref{cor:grothlocprobpringglobal})$ below.
\begin{proposition}\label{cor:pmorphismgenerizes}
  Fix a full subcategory
  $\sC$ of the category of noetherian rings that is stable under homomorphisms
  essentially of finite type.
  Let $\bR$ be a property of noetherian local rings, and consider a closed flat
  morphism $f\colon Y \to X$ of locally noetherian schemes.
  Assume one of the following:
  \begin{enumerate}[label=$(\roman*)$,ref=\roman*]
    \item $\bR$ satisfies the hypotheses of Theorem \ref{thm:grothlocprobqe} for
      the category $\sC$, the local rings of $X$ are quasi-excellent, and the
      local rings of $Y$ appear in $\sC$;\label{cor:grothlocprobqeglobal}
    \item $\bR$ satisfies the hypotheses of Corollary \ref{thm:grothlocprobpring}
      for the category $\sC$, the local rings of $X$ have geometrically $\bR$
      formal fibers, and the rings $\widehat{\cO}_{X,f(y)}$ and $\cO_{Y,y}^*$
      appear in $\sC$ for every $y \in Y$, where $\cO_{Y,y}^*$ denotes the
      $\fm_{f(y)}$-adic completion of $\cO_{Y,y}$;
      or\label{cor:grothlocprobpringglobal}
    \item $\bR$ satisfies the hypotheses of Corollary
      \ref{thm:grothlocprobbothpring} for the specific choice of $\sC$ therein,
      the local rings of $X$ have geometrically $\bR$ formal fibers, and the
      local rings $\cO_{f^{-1}(x),y}$
      have geometrically $\bR$ formal fibers for every $x \in X$ and every
      $y \in f^{-1}(x)$.\label{cor:grothlocprobbothpringglobal}
  \end{enumerate}
  Then, the locus
  \[
    U_{\bR}(f) \coloneqq \Set[\big]{x \in X \given f^{-1}(x)\ \text{is
    geometrically}\ \bR\ \text{over}\ \kappa(x)} \subseteq X
  \]
  is stable under generization.
\end{proposition}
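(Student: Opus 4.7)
\par The plan is to reduce to Proposition \ref{thm:grothlocprobglobal} by localizing $X$ at a specialization point. Let $x \in U_\bR(f)$ and let $x' \in X$ be a generization of $x$; we must show that $f^{-1}(x')$ is geometrically $\bR$ over $\kappa(x')$. Set $X_x \coloneqq \Spec(\cO_{X,x})$, so that the canonical morphism $X_x \to X$ has image equal to the set of generizations of $x$ and in particular contains $x'$, and let $Y_x \coloneqq Y \times_X X_x$ with its induced flat morphism $f_x \colon Y_x \to X_x$. The fiber of $f_x$ over any point of $X_x$ coincides, as a scheme over the corresponding residue field, with the fiber of $f$ over the image point in $X$.

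\par The crucial claim is that $f_x$ maps closed points to closed points. Since the unique closed point of $X_x$ is $x$, it suffices to show that every closed point $y \in Y_x$ satisfies $f_x(y) = x$. Writing $x_0 = f_x(y)$, so that $x_0 \rightsquigarrow x$ in $X$, the closedness of $f$ implies that $f(\overline{\{y\}}_Y)$ is closed in $X$; since this set contains $x_0$ and is closed, it contains $x$ as well. Hence there exists $y' \in \overline{\{y\}}_Y$ with $f(y') = x$, and because $x \in X_x$, we have $y' \in Y_x$ with $y \rightsquigarrow y'$ in $Y_x$. Since $y$ is closed in $Y_x$, we conclude $y = y'$, and thus $f_x(y) = x$.

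\par The remaining hypotheses of the appropriate case among $(\ref{cor:grothlocprobqeglobal})$--$(\ref{cor:grothlocprobbothpringglobal})$ of Proposition \ref{thm:grothlocprobglobal} transfer from $(X,Y)$ to $(X_x, Y_x)$ without effort: the local rings of $X_x$ (resp.\ $Y_x$) are local rings of $X$ (resp.\ $Y$), so the quasi-excellence, formal-fiber, and membership-in-$\sC$ conditions are all inherited. Moreover, the closed fiber $f_x^{-1}(x)$ is identified with $f^{-1}(x)$, which is geometrically $\bR$ because $x \in U_\bR(f)$. Applying Proposition \ref{thm:grothlocprobglobal} to $f_x$ then yields that all fibers of $f_x$ are geometrically $\bR$; in particular $f^{-1}(x') = f_x^{-1}(x')$ is geometrically $\bR$, so $x' \in U_\bR(f)$.

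\par The main obstacle is the closed-points-to-closed-points verification above, which is precisely where closedness of $f$ (as opposed to the weaker hypothesis of merely mapping closed points to closed points used in Proposition \ref{thm:grothlocprobglobal}) is essential: a closed point of $Y_x$ could a priori lie over any generization of $x$, and it is only the closedness of $f$ that, via the specialization-lifting argument above, forces such a point to map to $x$.
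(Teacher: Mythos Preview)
Your proof is correct and morally identical to the paper's argument; the only difference is packaging. The paper fixes a point $\xi \in f^{-1}(\eta)$, uses closedness of $f$ to lift the specialization $\eta \rightsquigarrow x$ to $\xi \rightsquigarrow y$ (this is exactly your closure argument $f(\overline{\{y\}})\ni x$, just read in the other direction), and then applies Theorem \ref{thm:grothlocprobqe}, Corollary \ref{thm:grothlocprobpring}, or Corollary \ref{thm:grothlocprobbothpring} directly to the local homomorphism $\cO_{X,x}\to\cO_{Y,y}$. You instead base change to $X_x=\Spec(\cO_{X,x})$, convert the same specialization-lifting computation into the statement that $f_x$ sends closed points to the closed point, and then invoke Proposition \ref{thm:grothlocprobglobal}; but unwinding that invocation reproduces the paper's local step verbatim. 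Both routes are equally short, and neither buys anything the other does not.
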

\begin{proof}
  Consider a specialization $\eta \rightsquigarrow x$ in $X$, and suppose that
  $f^{-1}(x)$ is geometrically $\bR$ over $\kappa(x)$.
  We want to show that $f^{-1}(\eta)$ is geometrically $\bR$ over
  $\kappa(\eta)$.
  It suffices to show that for every $\xi \in f^{-1}(\eta)$, the local ring
  $\cO_{Y,\xi}$ is geometrically $\bR$ over $\kappa(\eta)$.
  Since $f$ is closed, the specialization $\eta \rightsquigarrow x$ lifts to a
  specialization $\xi \rightsquigarrow y$
  \cite[\href{https://stacks.math.columbia.edu/tag/0066}{Tag
  0066}]{stacks-project}.
  After localization, we then obtain a flat local homomorphism
  \[
    \varphi\colon \cO_{X,x} \longrightarrow \cO_{Y,y}
  \]
  whose closed fiber is geometrically $\bR$ by assumption.
  Finally, we can apply Theorem \ref{thm:grothlocprobqe}, Corollary
  \ref{thm:grothlocprobpring}, and Corollary \ref{thm:grothlocprobbothpring} to
  the homomorphism $\varphi$ under the assumptions in
  $(\ref{cor:grothlocprobqeglobal})$, $(\ref{cor:grothlocprobpringglobal})$, and
  $(\ref{cor:grothlocprobbothpringglobal})$, respectively, to conclude that
  $\cO_{Y,\xi}$ is geometrically $\bR$ over $\eta$.
\end{proof}
\par We now deduce Theorem \ref{thm:grothlocprobglobalintro} as a consequence of
Propositions \ref{thm:grothlocprobglobal} and \ref{cor:pmorphismgenerizes}.
\begin{customthm}{A}\label{thm:grothlocprobglobalintro}
  Let $\bR$ be a property of noetherian local rings satisfying
  $(\hyperref[cond:ascentphom]{\textup{R}'_{\textup{I}}})$,
  $(\hyperref[cond:descent]{\textup{R}_{\textup{II}}})$,
  $(\hyperref[cond:deforms]{\textup{R}_{\textup{IV}}})$, and
  $(\hyperref[cond:generizes]{\textup{R}_{\textup{V}}})$, such that regular
  local rings satisfy $\bR$.
  Consider a flat morphism $f\colon Y \to X$ of locally noetherian
  schemes.
  \begin{enumerate}[label=$(\roman*)$,ref=\roman*]
    \item Suppose that $f$ maps closed to closed points, and that the local
      rings of $X$ at closed points have geometrically $\bR$ formal fibers.
      If every closed fiber of $f$ is geometrically $\bR$, then all fibers of
      $f$ are geometrically $\bR$.\label{thm:grothlocprobglobalintroclosedfib}
    \item Suppose that $f$ is closed, and that the local rings of $X$ have
      geometrically $\bR$ formal fibers.
      Then, the locus
      \[
        U_\bR(f) \coloneqq \Set[\big]{x \in X \given f^{-1}(x)\ \text{is
        geometrically}\ \bR\ \text{over}\ \kappa(x)} \subseteq X
      \]
      is stable under generization.\label{thm:grothlocprobglobalintrogenerizes}
  \end{enumerate}
\end{customthm}
\begin{proof}
  As in the proof of Theorem \ref{thm:grothlocprob}, it suffices to note that
  the hypotheses in $(\ref{thm:grothlocprobglobalintroclosedfib})$ and
  $(\ref{thm:grothlocprobglobalintrogenerizes})$
  imply those in Propositions
  \ref{thm:grothlocprobglobal}$(\ref{thm:grothlocprobpringglobal})$ and
  \ref{cor:pmorphismgenerizes}$(\ref{cor:grothlocprobpringglobal})$,
  respectively, when $\sC$ is the entire category of noetherian rings.
\end{proof}

\subsection{The local lifting problem \ref{prob:grothliftprob}}
\label{sect:liftingproblem}
To solve the local lifting problem \ref{prob:grothliftprob}, we use the
following theorem of Brezuleanu and Ionescu.
We state their theorem using the notation in Conditions \ref{cond:listof}
instead of that in \cite[(2.1)]{BI84}.
\begin{citedthm}[{\cite[Thm.\ 2.3]{BI84}}]\label{thm:bi23}
  Let $\bR'$ be a property of noetherian local rings.
  Assume the following:
  \begin{enumerate}[label=$(\roman*)$,ref=\roman*]
    \item\label{thm:bi23reg} We have the following sequence of implications:
      regular $\Rightarrow$ $\bR'$ $\Rightarrow$
      reduced.
    \item\label{thm:bi23conds} The property $\bR'$ satisfies 
      $(\hyperref[cond:ascentphom]{\textup{R}'_{\textup{I}}})$,
      $(\hyperref[cond:descent]{\textup{R}_{\textup{II}}})$,
      $(\ref{cond:openness})$,
      and
      $(\hyperref[cond:generizes]{\textup{R}_{\textup{V}}})$.
    \item\label{thm:bi23locprob}
      For every flat local homomorphism $\varphi\colon A \to B$ of noetherian
      complete local rings, if the closed fiber of $\varphi$ is geometrically
      $\bR'$, then all fibers of $\varphi$ are geometrically $\bR'$.
  \end{enumerate}
  Let $A$ be a noetherian semi-local ring that is $I$-adically complete with
  respect to an ideal $I \subseteq A$.
  If $A/I$ is has geometrically $\bR'$ formal fibers, then $A$ has geometrically
  $\bR'$ formal fibers.
\end{citedthm}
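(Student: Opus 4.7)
The plan is to prove that the $\fm$-adic completion $\varphi\colon A\to\widehat A$ is geometrically $\bR'$ by a case analysis on primes $\fp\in\Spec(A)$ according to whether $\fp$ contains $I$. I would first reduce to the case where $A$ is local via the standard decomposition $\widehat A\simeq\prod_i\widehat{A_{\fm_i}}$ of the $\fm$-adic completion of a noetherian semi-local ring. For a prime $\fp\supseteq I$, the compatibility $\widehat A/I\widehat A\simeq\widehat{A/I}$ identifies the fiber of $\varphi$ at $\fp$ with the fiber of $A/I\to\widehat{A/I}$ at $\fp/I$, which is geometrically $\bR'$ by the hypothesis on $A/I$. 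In particular every closed point of $\Spec\widehat A$ lies in $U_{\bR'}(\Spec\widehat A)$, and the openness condition $(\ref{cond:openness})$ applied to each complete local factor $\widehat{A_{\fm_i}}$ makes $U_{\bR'}(\Spec\widehat A)$ a nonempty open set.

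The main case is $\fp\not\supseteq I$. Fix such a $\fp$ together with a prime $\fP\subseteq\widehat A$ over $\fp$; the goal is to show that $\widehat A_\fP\otimes_{A_\fp}\kappa(\fp)$ satisfies $\bR'$, and the same after finite extensions of $\kappa(\fp)$. I would complete both sides of the flat local map $A_\fp\to\widehat A_\fP$ to obtain a flat local map $\widehat{A_\fp}\to\widehat{(\widehat A_\fP)}$ of noetherian complete local rings, to which hypothesis $(\ref{thm:bi23locprob})$ --- Grothendieck's localization problem for complete local rings --- would apply once its closed fiber is shown to be geometrically $\bR'$. To verify the latter, choose a maximal ideal $\fQ\supseteq\fP$ of $\widehat A$; then $\fq:=\fQ\cap A\supseteq\fp+I$ lies in the regime of the easy case, so $\widehat A_\fQ$ sits in $U_{\bR'}(\Spec\widehat A)$. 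Propagating $\bR'$ along the generization $\fQ\rightsquigarrow\fP$ within this open locus by means of $(\hyperref[cond:generizes]{\textup{R}_{\textup{V}}})$ yields $\bR'$ for $\widehat A_\fP$ and, after a further application of $(\hyperref[cond:descent]{\textup{R}_{\textup{II}}})$, for the closed fiber of $\widehat{A_\fp}\to\widehat{(\widehat A_\fP)}$. I would then invoke $(\ref{thm:bi23locprob})$, descend via $(\hyperref[cond:descent]{\textup{R}_{\textup{II}}})$ from the completed map back to $A_\fp\to\widehat A_\fP$ (using that the vertical completion maps are geometrically regular, hence geometrically $\bR'$ since regular $\Rightarrow\bR'$), and finally apply $(\hyperref[cond:ascentphom]{\textup{R}'_{\textup{I}}})$ and Lemma \ref{lem:egariimpliespj} to handle finite extensions of $\kappa(\fp)$ and obtain the \emph{geometric} $\bR'$ statement.

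The principal obstacle lies in the second case: one must bridge the complete-local hypothesis $(\ref{thm:bi23locprob})$ to our original flat local map by means of a double completion, while transferring $\bR'$ information along the generization $\fQ\rightsquigarrow\fP$ and through the various completions without circularity. The interaction between the implication $\bR'\Rightarrow$ reduced (which controls completions at non-maximal primes and ensures that minimal primes behave well), the openness on $\Spec\widehat A$, and $(\hyperref[cond:descent]{\textup{R}_{\textup{II}}})$ is the technical heart that ties the pieces together.
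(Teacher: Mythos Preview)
First, note that the paper does not prove this statement: it is recorded as a \emph{cited} theorem from \cite[Thm.\ 2.3]{BI84} and then used as a black box to deduce Theorem~\ref{thm:grothlocliftprob}. There is therefore no in-paper proof to compare your proposal against.

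That said, your sketch contains a genuine gap in the ``main case.'' After handling primes $\fp\supseteq I$ via $\widehat A/I\widehat A\simeq\widehat{A/I}$, you assert that ``every closed point of $\Spec\widehat A$ lies in $U_{\bR'}(\Spec\widehat A)$.'' This does not follow from what you have established. The easy case shows that the \emph{formal fibers} $\widehat A\otimes_A\kappa(\fp)$ for $\fp\supseteq I$ are geometrically $\bR'$; it says nothing about whether the local rings of $\widehat A$ itself satisfy $\bR'$. In the local case, $\widehat A_\fQ$ for the maximal ideal $\fQ$ is just $\widehat A$, and $A$ (hence $\widehat A$) may be arbitrarily singular---there is no hypothesis forcing $\widehat A$ to satisfy $\bR'$ anywhere. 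Your application of $(\ref{cond:openness})$ to $\Spec\widehat A$ and the subsequent generization $\fQ\rightsquigarrow\fP$ inside $U_{\bR'}(\Spec\widehat A)$ are therefore unsupported.

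There is a second issue even if one grants that $\widehat A_\fP$ satisfies $\bR'$. You then invoke $(\hyperref[cond:descent]{\textup{R}_{\textup{II}}})$ to obtain $\bR'$ for the closed fiber of $\widehat{A_\fp}\to\widehat{(\widehat A_\fP)}$. But descent along a flat local map transports $\bR'$ from the target to the \emph{source}, not to a fiber: from $\widehat A_\fP$ one would descend to $A_\fp$, which is irrelevant. The closed fiber in question is the completion of $\widehat A_\fP/\fp\widehat A_\fP$, and neither passage to a quotient nor passage to a completion is covered by the axioms you list. The right objects on which to invoke $(\ref{cond:openness})$ are complete local rings built from the \emph{fibers} themselves (e.g.\ quotients $\widehat A/\fp\widehat A$ or their completions), combined with a Noetherian induction on $\Spec A$, rather than $\Spec\widehat A$ as a whole; this is where the hypotheses regular $\Rightarrow\bR'\Rightarrow$ reduced and the openness axiom actually engage.
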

We obtain Theorem \ref{thm:grothlocliftprob} and an important corollary as
immediate consequences.
\begin{customthm}{C}\label{thm:grothlocliftprob}
  Let $\bR$ be a property of noetherian local rings that satisfies the
  hypotheses in Theorem \ref{thm:grothlocprob}.
  Suppose, moreover, that the locus
  \[
    U_\bR\bigl(\Spec(C)\bigr) \coloneqq
    \Set[\big]{\fp \in \Spec(C) \given C_\fp\ \text{satisfies $\bR$}}
    \subseteq \Spec(C)
  \]
  is open for every noetherian complete local ring $C$.
  If $A$ is a noetherian semi-local ring that is $I$-adically complete with
  respect to an ideal $I \subseteq A$, and if $A/I$ is Nagata and 
  has geometrically $\bR$ formal fibers, then $A$ is Nagata and has
  geometrically $\bR$ formal fibers.
\end{customthm}
\begin{proof}
  Since a noetherian semi-local ring is Nagata if and only if it has
  geometrically reduced formal fibers (Remark \ref{rem:zariskinagata}),
  it suffices to verify the hypotheses in Theorem \ref{thm:bi23} for
  $\bR' = \bR + \text{``reduced.''}$
  Both $(\ref{thm:bi23reg})$ and $(\ref{thm:bi23conds})$ hold by assumption.
  Theorem \ref{thm:grothlocprob} implies $(\ref{thm:bi23locprob})$
  holds, since complete local rings have geometrically $\bR$ formal fibers by
  the assumption that regular local rings satisfy $\bR$.
\end{proof}
\begin{corollary}\label{cor:grothlocliftprob}
  With assumptions as in Theorem \ref{thm:grothlocliftprob}, if $B$ is a
  noetherian semi-local Nagata ring that has geometrically $\bR$ formal fibers,
  then for every ideal $I \subseteq B$,
  the $I$-adic completion of $B$ is Nagata and has geometrically $\bR$ formal
  fibers.
\end{corollary}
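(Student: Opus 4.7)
The plan is to apply Theorem~\ref{thm:grothlocliftprob} to the ring $A \coloneqq \widehat{B}^I$ (the $I$-adic completion of $B$) together with the ideal $IA \subseteq A$. First I would verify the setup hypotheses: $A$ is noetherian since $B$ is, semi-local because the maximal ideals of $A$ correspond bijectively to those maximal ideals of $B$ containing $I$, and $IA$-adically complete by construction. Moreover, there is a canonical isomorphism $A/IA \cong B/I$, so Theorem~\ref{thm:grothlocliftprob} will produce the desired conclusion for $A$ as soon as I can show that $B/I$ is Nagata and has geometrically $\bR$ formal fibers.

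That $B/I$ is Nagata follows quickly from the corresponding property for $B$: every domain of finite type over $B/I$ is also of finite type over $B$, hence Japanese. The main content is therefore to transfer the geometrically $\bR$ formal fibers property from $B$ down to $B/I$. Writing $\fm_1, \ldots, \fm_s$ for the maximal ideals of $B$ containing $I$ and $\fn_j \coloneqq \fm_j/I$ for the corresponding maximal ideals of $B/I$, I would reduce, via the splitting of the semi-local completion into a product over maximal ideals, to verifying that each completion homomorphism $(B/I)_{\fn_j} \to \widehat{(B/I)_{\fn_j}}$ is geometrically $\bR$. Under the identifications $(B/I)_{\fn_j} \cong B_{\fm_j}/IB_{\fm_j}$ and $\widehat{(B/I)_{\fn_j}} \cong \widehat{B_{\fm_j}}/I\widehat{B_{\fm_j}}$, this homomorphism is precisely the base change of $B_{\fm_j} \to \widehat{B_{\fm_j}}$ along the quotient $B_{\fm_j} \to B_{\fm_j}/IB_{\fm_j}$.

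The key point, and the step I expect to require the most care, is confirming that base change along a quotient map preserves the geometrically $\bR$ property. This is not an instance of $(\hyperref[cond:fgfieldext]{\textup{P}_{\textup{IV}}})$, but rather holds for the elementary reason that for every prime $\fp' \in \Spec(B_{\fm_j}/IB_{\fm_j})$ lifting to $\fp \in \Spec(B_{\fm_j})$ with $\fp \supseteq IB_{\fm_j}$, one has $\kappa(\fp') = \kappa(\fp)$, and the fiber of the base-changed homomorphism at $\fp'$ is canonically isomorphic to the fiber of $B_{\fm_j} \to \widehat{B_{\fm_j}}$ at $\fp$, which is geometrically $\bR$ over $\kappa(\fp) = \kappa(\fp')$ by hypothesis. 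Once this is established, Theorem~\ref{thm:grothlocliftprob} applies directly and yields that $A$ is Nagata with geometrically $\bR$ formal fibers, completing the proof.
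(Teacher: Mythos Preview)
Your proposal is correct and follows essentially the same approach as the paper: reduce to Theorem~\ref{thm:grothlocliftprob} applied to $A = \widehat{B}^I$ with the ideal $IA$, using $A/IA \cong B/I$, and then verify that $B/I$ is Nagata with geometrically $\bR$ formal fibers. The only difference is that where the paper dispatches the latter two facts by citing \cite[Prop.\ 7.3.15$(i)$]{EGAIV2} and Definition~\ref{def:nagata}, you spell them out directly---your fiber-by-fiber argument for the formal fiber condition is exactly the content of the cited EGA reference.
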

\begin{proof}
  The $I$-adic completion $\widehat{B}$ of $B$ is a noetherian semi-local ring
  that is $I\widehat{B}$-adically complete by \cite[Ch.\ III,
  \S3, n\textsuperscript{o} 4, Prop.\ 8]{BouCA}.
  Thus, by Theorem \ref{thm:grothlocliftprob}, it suffices to show that
  $\widehat{B}/I\widehat{B} \simeq B/I$ is a Nagata ring with geometrically
  $\bR$ formal fibers.
  But $B/I$ has geometrically $\bR$ formal fibers by \cite[Prop.\
  7.3.15$(i)$]{EGAIV2}, and is also Nagata by Definition \ref{def:nagata}.
\end{proof}
\section{Specific properties \texorpdfstring{$\bR$}{R}}\label{sect:specific}
We now explicitly consider our new cases of Grothendieck's localization problem
\ref{prob:grothlocprob} and the local lifting problem \ref{prob:grothliftprob}.
We have listed known cases of conditions 
$(\hyperref[cond:ascentphom]{\textup{R}'_{\textup{I}}})$,
$(\hyperref[cond:descent]{\textup{R}_{\textup{II}}})$,
$(\ref{cond:openness})$,
$(\hyperref[cond:deforms]{\textup{R}_{\textup{IV}}})$, and
$(\hyperref[cond:generizes]{\textup{R}_{\textup{V}}})$
in Table \ref{table:condslist}.
While Problem \ref{prob:grothlocprob} follows readily from these results
when $\bR =$ ``domain,'' ``Cohen--Macaulay and $F$-injective,'' and
``$F$-rational,'' we will have to verify
$(\hyperref[cond:deforms]{\textup{R}_{\textup{IV}}})$ for weak normality
(Proposition \ref{prop:weaklynormaldeforms}).
We will deduce our results for terminal, canonical, and rational singularities
from \cite[Prop.\ 7.9.8]{EGAIV2} instead of Theorems
\ref{thm:grothlocprobglobalintro} and \ref{thm:grothlocprob}.
\par We will not explicitly formulate versions of Theorem
\ref{thm:grothlocprobglobalintro} below, except for terminal, canonical, and
rational singularities
(Corollary \ref{cor:rationalsingglobal}).
\subsection{Domain}
We first note that a version of Problem \ref{prob:grothlocprob} holds for
$\bR =$ ``domain.''
This property satisfies the hypotheses in Theorem
\ref{thm:grothlocprobqe} (see Table \ref{table:condslist}).
This extends a result of Marot \cite{Mar84}, which holds in residue
characteristic zero.
We use the terminology ``geometrically punctually integral'' following
\cite[D\'ef.\ 4.6.9]{EGAIV2} for the property obtained by applying Definition
\ref{def:pring} to $\bR =$ ``domain.''
\begin{corollary}[{cf.\ \cite[Thm.\ 2.1]{Mar84}}]\label{cor:domain}
  Let $\varphi\colon A \to B$ be a flat local homomorphism of
  noetherian local rings, and assume that $A$ is quasi-excellent.
  If the closed fiber of $\varphi$ is geometrically punctually integral, then
  all fibers of $\varphi$ are geometrically punctually integral.
\end{corollary}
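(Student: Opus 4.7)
The plan is to apply Theorem~\ref{thm:grothlocprobqe} with $\bR$ taken to be the property ``is a noetherian integral domain'' and with $\sC$ the full category of noetherian rings. The hypothesis $(\ref{thm:qecond})$ that $A$ is quasi-excellent is directly assumed, hypothesis $(\ref{thm:qecatcond})$ is vacuous for this $\sC$, and ``closed fiber geometrically punctually integral'' is exactly the condition that the closed fiber is geometrically $\bR$ in the sense of Definition~\ref{def:pring}. The whole task therefore reduces to verifying that the domain property satisfies the four permanence conditions $(\hyperref[cond:descent]{\textup{R}_{\textup{II}}})$, $(\hyperref[cond:deforms]{\textup{R}_{\textup{IV}}})$, $(\hyperref[cond:generizes]{\textup{R}_{\textup{V}}})$, and $(\hyperref[cond:fgfieldext]{\textup{P}_{\textup{IV}}})$.

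Three of these verifications are routine. Condition $(\hyperref[cond:descent]{\textup{R}_{\textup{II}}})$ follows because a flat local homomorphism of noetherian local rings is faithfully flat and hence injective, so any subring of a domain is a domain. Condition $(\hyperref[cond:generizes]{\textup{R}_{\textup{V}}})$ is immediate, since localizations of domains are domains. For $(\hyperref[cond:deforms]{\textup{R}_{\textup{IV}}})$, suppose $t \in \fm$ is a nonzerodivisor with $A/tA$ a domain, and suppose $xy = 0$ in $A$; reducing modulo $t$ forces (say) $x \in tA$, so $x = tx_1$, and cancelling the nonzerodivisor $t$ gives $x_1 y = 0$. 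Iterating places $x$ in $\bigcap_{n \ge 1} t^n A$, which vanishes in the noetherian local ring $A$ by Krull's intersection theorem.

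The only delicate point is $(\hyperref[cond:fgfieldext]{\textup{P}_{\textup{IV}}})$, the stability of the geometrically punctually integral property under finitely generated ground field extensions. By Lemma~\ref{lem:egariimpliespj}$(\ref{lem:egar1impliesp4})$, it suffices to check the special cases of $(\hyperref[cond:ascentreghom]{\textup{R}_{\textup{I}}})$ and $(\hyperref[cond:descent]{\textup{R}_{\textup{II}}})$ for essentially of finite type homomorphisms; the descent case is already in hand, while the ascent case amounts to the standard fact that a smooth (essentially of finite type, geometrically regular, flat) local extension of a noetherian local domain is itself a domain. These verifications will be recorded in Table~\ref{table:condslist}. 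With all four conditions in place, Theorem~\ref{thm:grothlocprobqe} applies and delivers the conclusion that every fiber of $\varphi$ is geometrically punctually integral.
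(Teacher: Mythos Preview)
Your overall strategy is the paper's: apply Theorem~\ref{thm:grothlocprobqe} with $\sC$ the whole category of noetherian rings, and your verifications of $(\hyperref[cond:descent]{\textup{R}_{\textup{II}}})$, $(\hyperref[cond:deforms]{\textup{R}_{\textup{IV}}})$, and $(\hyperref[cond:generizes]{\textup{R}_{\textup{V}}})$ are correct. The problem is your route to $(\hyperref[cond:fgfieldext]{\textup{P}_{\textup{IV}}})$.

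You invoke Lemma~\ref{lem:egariimpliespj}$(\ref{lem:egar1impliesp4})$, which requires the essentially-of-finite-type case of $(\hyperref[cond:ascentreghom]{\textup{R}_{\textup{I}}})$, and you assert as a ``standard fact'' that an essentially smooth local extension of a noetherian local domain is again a domain. This is false. Take $k$ algebraically closed of characteristic $\ne 2$ and let $A = k[x,y]_{(x,y)}/(y^2 - x^2(1+x))$ be the local ring of the node. Set $B = \bigl(A[u]/(u^2 - 1 - x)\bigr)_{\fn}$, localized at the maximal ideal with $u \equiv 1$. Then $A \to B$ is \'etale (hence essentially smooth) and local, but eliminating $x = u^2 - 1$ shows that in $B$ one has $(y - u(u^2-1))(y + u(u^2-1)) = 0$ with both factors nonzero, so $B$ is not a domain. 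This is precisely why Table~\ref{table:condslist} records ascent as \emph{false} for $\bR =$ ``domain''; the failure already occurs in the \'etale-local case, so restricting to essentially-of-finite-type maps does not save Lemma~\ref{lem:egariimpliespj}$(\ref{lem:egar1impliesp4})$.

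The condition $(\hyperref[cond:fgfieldext]{\textup{P}_{\textup{IV}}})$ does hold for ``domain,'' but one must argue differently: it is the EGA statement that geometric punctual integrality, once known for all \emph{finite} extensions of $k$, automatically persists to arbitrary (in particular finitely generated) extensions; see \cite[\S4.6]{EGAIV2}, especially the discussion surrounding D\'ef.~4.6.9. Concretely, after reducing to $A$ local one factors a finitely generated extension as a purely transcendental extension followed by a finite one; the transcendental step preserves the domain property since $A \otimes_k k(t_1,\ldots,t_n)$ is a localization of the domain $A[t_1,\ldots,t_n]$, and one shows directly that $A \otimes_k k(t_1,\ldots,t_n)$ remains geometrically punctually integral over $k(t_1,\ldots,t_n)$, so the finite step is then handled by the hypothesis. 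Replace your appeal to Lemma~\ref{lem:egariimpliespj}$(\ref{lem:egar1impliesp4})$ with this argument (or the EGA citation) and the proof goes through.
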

\begin{remark}
  Since $(\ref{cond:ascentphom})$ is false for the property $\bR =$ ``domain''
  \cite[Rems.\ 6.5.5$(ii)$ and 6.15.11$(ii)$]{EGAIV2},
  we only know that global versions of Corollary
  \ref{cor:domain} hold under quasi-excellence assumptions by applying
  Propositions \ref{thm:grothlocprobglobal}$(\ref{thm:grothlocprobqeglobal})$
  and \ref{cor:pmorphismgenerizes}$(\ref{cor:grothlocprobqeglobal})$.
  See also \cite[Cor.\ 7.9.9 and Rem.\ 7.9.10$(i)$]{EGAIV2}.
\end{remark}
\subsection{\emph{F}-singularities}
We now solve Problems \ref{prob:grothlocprob} and \ref{prob:grothliftprob} for
``Cohen--Macaulay and $F$-injective'' and for $F$-rationality.
See \cite[Def.\ on p.\ 473]{Fed83} and \cite[Def.\ 4.1]{HH94} for the
definitions of $F$-injectivity and $F$-rationality, respectively.
We recall (see \cite[Rem.\ A.4]{DM}) that we have the following sequence of
implications:
\begin{equation}\label{eq:fsingsimplications}
  \text{regular} \Longrightarrow \text{$F$-rational} \Longrightarrow
  \text{$F$-injective} \Longrightarrow \text{reduced}
\end{equation}
\par Since the property ``Cohen--Macaulay and $F$-injective'' satisfies the
hypotheses in Theorems \ref{thm:grothlocprob} and \ref{thm:grothlocliftprob}
(see Table \ref{table:condslist} and \eqref{eq:fsingsimplications}),
we can solve Problems \ref{prob:grothlocprob} and \ref{prob:grothliftprob} for
this property.
The result for Problem \ref{prob:grothlocprob} extends a result of Hashimoto
\cite{Has01} to the non-$F$-finite case.
\begin{corollary}[cf.\ {\cite[Thm.\ 5.8]{Has01}}]\label{cor:applythmacmfi}
  Grothendieck's localization problem \ref{prob:grothlocprob} and the local
  lifting problem \ref{prob:grothliftprob} hold for
  ``Cohen--Macaulay and $F$-injective,'' where $A$ is assumed to be of prime
  characteristic $p > 0$.
\end{corollary}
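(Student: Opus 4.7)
The plan is to derive the corollary from the general Theorems \ref{thm:grothlocprob} and \ref{thm:grothlocliftprob} by verifying their hypotheses for $\bR =$ ``Cohen--Macaulay and $F$-injective'' on the category of noetherian local rings of characteristic $p > 0$. The assumption that regular rings satisfy $\bR$ is immediate: regular local rings are Cohen--Macaulay, and by the chain of implications \eqref{eq:fsingsimplications} they are also $F$-injective.

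It remains to verify $(\hyperref[cond:ascentphom]{\textup{R}'_{\textup{I}}})$, $(\hyperref[cond:descent]{\textup{R}_{\textup{II}}})$, $(\hyperref[cond:deforms]{\textup{R}_{\textup{IV}}})$, $(\hyperref[cond:generizes]{\textup{R}_{\textup{V}}})$, and $(\ref{cond:openness})$ for the conjoined property. Since each of these five conditions is stable under taking conjunctions of properties (a routine check: each is of the form ``if (hypotheses) then $\bR$ holds,'' and one simply applies the implication once for each conjunct), it suffices to verify them separately for ``Cohen--Macaulay'' and for ``$F$-injective.'' For Cohen--Macaulay, all five conditions are classical and I would invoke them directly from the corresponding entries of Table \ref{table:condslist}. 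For $F$-injective, the conditions $(\hyperref[cond:ascentphom]{\textup{R}'_{\textup{I}}})$, $(\hyperref[cond:descent]{\textup{R}_{\textup{II}}})$, $(\hyperref[cond:generizes]{\textup{R}_{\textup{V}}})$, and $(\ref{cond:openness})$ are again documented in Table \ref{table:condslist}.

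The main obstacle, and the reason for considering ``Cohen--Macaulay and $F$-injective'' rather than $F$-injective alone, is condition $(\hyperref[cond:deforms]{\textup{R}_{\textup{IV}}})$: $F$-injectivity does \emph{not} deform from Cartier divisors in general. However, a classical lemma of Fedder shows that if $(A,\fm)$ is a noetherian local ring of characteristic $p > 0$, $t \in \fm$ is a nonzerodivisor, and $A/tA$ is Cohen--Macaulay and $F$-injective, then so is $A$. (The Cohen--Macaulay half is the usual statement that Cohen--Macaulayness deforms; the $F$-injective half is Fedder's argument, which uses Cohen--Macaulayness of $A$ to commute local cohomology with Frobenius modulo $t$.) This lifting statement is precisely $(\hyperref[cond:deforms]{\textup{R}_{\textup{IV}}})$ for the conjoined property, and with all five conditions now in hand, Theorems \ref{thm:grothlocprob} and \ref{thm:grothlocliftprob} apply directly to yield both conclusions of the corollary.
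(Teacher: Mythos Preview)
Your proposal is correct and follows essentially the same route as the paper: verify that ``Cohen--Macaulay and $F$-injective'' satisfies the hypotheses of Theorems \ref{thm:grothlocprob} and \ref{thm:grothlocliftprob} (via Table \ref{table:condslist} and \eqref{eq:fsingsimplications}) and then apply those theorems. Your exposition is more detailed than the paper's one-line justification, and your explanation of why the conjoined property is needed for $(\hyperref[cond:deforms]{\textup{R}_{\textup{IV}}})$ via Fedder's result is exactly the point.

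Two small remarks. First, Table \ref{table:condslist} records the five conditions for the \emph{conjoined} property ``C--M $+$ $F$-injective'' in a single row, not for $F$-injectivity alone; so your appeal to the table ``for $F$-injective'' is a slight misreading, and the conjunction-stability detour is unnecessary (though not wrong). Second, to conclude that Theorem \ref{thm:grothlocliftprob} actually solves Problem \ref{prob:grothliftprob} \emph{as stated} (i.e., without an extra Nagata hypothesis on $A/I$), you need the implication $\bR \Rightarrow$ reduced: then geometrically $\bR$ formal fibers for $A/I$ force geometrically reduced formal fibers, hence $A/I$ is Nagata by Remark \ref{rem:zariskinagata}. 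This comes from the tail of \eqref{eq:fsingsimplications}, which you cite only for the ``regular $\Rightarrow$'' direction; it is worth making the other direction explicit.
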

\begin{remark}
  Shimomoto and Zhang proved Grothendieck's localization problem
  \ref{prob:grothlocprob} for the property ``Gorenstein and $F$-pure'' when $A$
  and $B$ are $F$-finite \cite[Thm.\ 3.10]{SZ09}.
  Their result is a special case of Hashimoto's result, since $F$-purity and
  $F$-injectivity coincide for Gorenstein rings \cite[Lem.\ 3.3]{Fed83}, and
  since Problem \ref{prob:grothlocprob} holds for Gorensteinness \cite[Thm.\
  3.2]{Mar84}.
  Corollary \ref{cor:applythmacmfi} also extends Shimomoto and Zhang's result to
  the non-$F$-finite case.
  See also Remark \ref{rem:psz}.
\end{remark}
\par Next, we consider Grothendieck's localization problem
\ref{prob:grothlocprob} for $F$-rationality.
The following extends a result of Hashimoto \cite{Has01} to rings $A$ not
necessarily essentially of finite type over a field, and a result of Shimomoto
\cite{Shi17} to homomorphisms not necessarily of finite type,
giving a complete answer to a question of Hashimoto \cite[Rem.\
6.7]{Has01}.
\begin{corollary}[cf.\ {\citeleft\citen{Has01}\citemid Rem.\
  6.7\citepunct\citen{Shi17}\citemid Cor.\ 3.10\citeright}]
  \label{cor:frational}
  Let $\varphi\colon A \to B$ be a flat local homomorphism of
  noetherian local rings of prime characteristic $p > 0$.
  Assume that $A$ is quasi-excellent and that $B$ is excellent.
  If the closed fiber of $\varphi$ is geometrically $F$-rational, then all
  fibers of $\varphi$ are geometrically $F$-rational.
\end{corollary}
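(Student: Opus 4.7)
The plan is to deduce Corollary~\ref{cor:frational} directly from Theorem~\ref{thm:grothlocprobqe} applied to the property $\bR = $ ``$F$-rational,'' with $\sC$ taken to be the category of excellent noetherian rings of characteristic $p$. Since excellence is stable under essentially-of-finite-type homomorphisms, this $\sC$ satisfies the blanket hypothesis of Theorem~\ref{thm:grothlocprobqe}. The quasi-excellence of $A$ gives condition $(i)$ of that theorem, and the excellence of $B$ places $B$ in $\sC$, yielding $(ii)$. The content of the proof is therefore just the verification of the permanence conditions $(\hyperref[cond:descent]{\textup{R}_{\textup{II}}})$, $(\hyperref[cond:deforms]{\textup{R}_{\textup{IV}}})$, $(\hyperref[cond:generizes]{\textup{R}_{\textup{V}}})$, and $(\hyperref[cond:fgfieldext]{\textup{P}_{\textup{IV}}})$ for $F$-rationality within $\sC$.

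For the verification, descent of $F$-rationality along flat local homomorphisms $(\hyperref[cond:descent]{\textup{R}_{\textup{II}}})$ and deformation from Cartier divisors $(\hyperref[cond:deforms]{\textup{R}_{\textup{IV}}})$ (the latter being the classical theorem of Hochster--Huneke) are both recorded in Table~\ref{table:condslist}, as is the localization property $(\hyperref[cond:generizes]{\textup{R}_{\textup{V}}})$ for excellent rings (due to V\'elez). For $(\hyperref[cond:fgfieldext]{\textup{P}_{\textup{IV}}})$, I would appeal to Lemma~\ref{lem:egariimpliespj}$(\ref{lem:egar1impliesp4})$, which reduces the claim to the special cases of $(\hyperref[cond:ascentreghom]{\textup{R}_{\textup{I}}})$ and $(\hyperref[cond:descent]{\textup{R}_{\textup{II}}})$ for homomorphisms essentially of finite type within $\sC$; ascent of $F$-rationality along geometrically regular homomorphisms of this restricted type is standard and is again cited in Table~\ref{table:condslist}.

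The key subtlety, and the reason the hypotheses are asymmetric, is that localization of $F$-rationality is only known in the excellent case. This is what forces $B$ to be assumed excellent rather than merely quasi-excellent, and it is also why $\sC$ must be the category of excellent rings: the proof of Theorem~\ref{thm:grothlocprobqe} carries out several essentially-of-finite-type and completion-free constructions on $B$, and one needs all of these to remain in a category on which $(\hyperref[cond:generizes]{\textup{R}_{\textup{V}}})$ is known. There is no real obstacle beyond matching the needed permanence properties of $F$-rationality to the axiomatic hypotheses of Theorem~\ref{thm:grothlocprobqe}; once this bookkeeping is carried out, the corollary falls out immediately, which is precisely what makes this framework effective for unifying previously ad hoc proofs.
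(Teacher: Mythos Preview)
Your approach is essentially the paper's: apply Theorem~\ref{thm:grothlocprobqe} with $\sC$ the category of excellent rings, and verify the four permanence conditions from Table~\ref{table:condslist}. However, there is a small but genuine gap in your verification of $(\hyperref[cond:deforms]{\textup{R}_{\textup{IV}}})$ and $(\hyperref[cond:generizes]{\textup{R}_{\textup{V}}})$.

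The Hochster--Huneke results you invoke (and which Table~\ref{table:condslist} cites as \cite[Thm.\ 4.2$(f)$,$(h)$]{HH94}) are proved only for rings that are homomorphic images of Cohen--Macaulay rings, not for excellent rings in general; this is the footnote ``CM'' in the table. The paper closes this gap by invoking Kawasaki's theorem \cite[Cor.\ 1.2]{Kaw02}, which says that every excellent local ring is a homomorphic image of a Cohen--Macaulay ring. Without this step, your claim that $(\hyperref[cond:generizes]{\textup{R}_{\textup{V}}})$ ``holds for excellent rings'' is unjustified. Incidentally, you attribute $(\hyperref[cond:generizes]{\textup{R}_{\textup{V}}})$ to V\'elez, but V\'elez's result \cite[Thm.\ 3.5]{Vel95} is the openness statement $(\ref{cond:openness})$; localization is \cite[Thm.\ 4.2$(f)$]{HH94}. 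Once Kawasaki's theorem is inserted, your argument matches the paper's exactly, including the derivation of $(\hyperref[cond:fgfieldext]{\textup{P}_{\textup{IV}}})$ via Lemma~\ref{lem:egariimpliespj}$(\ref{lem:egar1impliesp4})$, which the paper leaves implicit.
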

\begin{proof}
  We apply Theorem \ref{thm:grothlocprobqe} when $\sC$ is the category
  of excellent rings, which is stable under homomorphisms essentially of finite
  type by \cite[Sch.\ 7.8.3$(ii)$]{EGAIV2}.
  To verify the hypotheses of Theorem \ref{thm:grothlocprobqe}, it suffices to
  note that excellent local rings are homomorphic images of Cohen--Macaulay
  rings \cite[Cor.\ 1.2]{Kaw02}, and hence $(\ref{cond:deforms})$,
  $(\ref{cond:generizes})$, and $(\ref{cond:fgfieldext})$ hold by
  Table \ref{table:condslist}.
\end{proof}
\begin{remark}\label{rem:psz}
  Patakfalvi, Schwede, and Zhang also obtained a version of Problem
  \ref{prob:grothlocprob} for proper flat morphisms $f \colon Y \to X$, showing
  that the locus $U_\bR(f)$ defined in Definition \ref{def:pring}
  is open when $\bR =$ ``Cohen--Macaulay and
  $F$-injective'' (resp.\ ``$F$-rational'') under
  the assumption that $X$ is an excellent integral scheme with a dualizing
  complex \cite[Thm.\ 5.13]{PSZ18}.
  Theorem \ref{thm:grothlocprobglobalintro}$(\ref{thm:grothlocprobglobalintrogenerizes})$
  (resp.\ Proposition
  \ref{cor:pmorphismgenerizes}$(\ref{cor:grothlocprobqeglobal})$)
  implies that the locus $U_\bR(f)$ is stable under generization, even if
  $f$ is only closed and flat (resp.\ $f$ is closed and flat, $X$
  is quasi-excellent, and $Y$ is excellent).
  \par For $F$-rationality, we note that
  Theorem
  \ref{thm:grothlocprobglobalintro}$(\ref{thm:grothlocprobglobalintroclosedfib})$
  does not apply, but one can apply Proposition
  \ref{thm:grothlocprobglobal}$(\ref{thm:grothlocprobqeglobal})$ instead.
\end{remark}

\subsection{Weak normality and seminormality}
In this subsection, we prove that weak normality lifts from Cartier divisors.
We then solve Problems \ref{prob:grothlocprob} and \ref{prob:grothliftprob} for
both weak normality and seminormality.
\par To fix notation, we first define weak normality and seminormality.
\begin{definition}[see {\cite[Def.\ 50]{Kol16}}]
  Let $X$ be a noetherian scheme.
  A morphism $g\colon X' \to X$ is a \textsl{partial normalization} if $X'$ is
  reduced, $g$ is integral, and $X' \to X_\red$ is birational.
  A partial normalization is a \textsl{partial weak normalization} if $g$ is a
  universal homeomorphism.
  A partial weak normalization is a \textsl{partial seminormalization} if
  the induced extensions of residue fields
  \begin{equation}\label{eq:extresfields}
    \kappa(x) \subseteq \kappa\bigl(g^{-1}(x)_\red\bigr)
  \end{equation}
  are bijective for all $x \in X$.
  \par We say that $X$ is \textsl{weakly normal} (resp.\ \textsl{seminormal})
  if every finite partial weak normalization (resp.\ finite partial
  seminormalization) $g\colon X' \to X$ is an isomorphism.
  A noetherian ring $A$ is \textsl{weakly normal} (resp.\ seminormal) if
  $\Spec(A)$ is weakly normal (resp.\ seminormal).
\end{definition}
\par We have the sequence of implications
\begin{equation}\label{eq:varnorimplications}
  \text{normal} \Longrightarrow \text{weakly normal} \Longrightarrow
  \text{seminormal} \Longrightarrow \text{reduced}
\end{equation}
where for the last implication, we use that $X_\red \to X$ is a finite partial
seminormalization; see \cite[(4.4)]{Kol16}.
We also note that a partial normalization is a partial weak normalization if and
only if the extensions \eqref{eq:extresfields} are
purely inseparable for all $x \in X$ by \cite[Cor.\ 18.12.11]{EGAIV4}.\medskip
\par To show that weak normality lift from Cartier divisors,
we follow the proof in \cite{BF93}, with suitable modifications to avoid
excellence hypotheses.
\begin{lemma}[cf.\ {\cite[Prop.\ 4.7]{BF93}}]\label{lem:bfprop47}
  Let $(A,\fm_A)$ be a noetherian local ring with $\depth(A_\red) \ge 2$, and
  let $B$ be a module-finite $A$-algebra such that $\Spec(B) \to
  \Spec(A)$ is a partial weak normalization.
  Set $U \coloneqq \Spec(A) - \{\fm_A\}$.
  Then, the local hull
  \[
    C \coloneqq \Gamma\bigl(U,\tilde{B}\rvert_{U}\bigr)
  \]
  is a module-finite local $B$-algebra with $\depth(C) \ge 2$, such that
  $\Spec(C) \to \Spec(B)$ is a partial weak normalization.
\end{lemma}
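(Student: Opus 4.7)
The plan is to verify, in order, the injectivity of $B \hookrightarrow C$, the module-finiteness of $C$ over $B$, that $\Spec(C) \to \Spec(B)$ is a partial weak normalization, and the depth assertion $\depth(C) \ge 2$.

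For injectivity, I use that $\Spec(B) \to \Spec(A)$ is a finite universal homeomorphism and $B$ is reduced, so the $A$-associated primes of $B$ are the minimal primes of $A$ (pulled back along the homeomorphism of spectra). The hypothesis $\depth(A_\red) \ge 2$ forces $\dim A \ge 2$, so $\fm_A \notin \Ass_A(B)$, giving $\depth_{\fm_A}(B) \ge 1$ and $H^0_{\fm_A}(B) = 0$. The standard local cohomology sequence
\[
0 \to B \to C \to H^1_{\fm_A}(B) \to 0
\]
then realizes $B$ as a subring of $C$ with cokernel $H^1_{\fm_A}(B)$.

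For module-finiteness, I would work inside the common total ring of fractions $K = \Frac(A_\red) = \Frac(B)$ (equal by birationality) and exploit the short exact sequence $0 \to A_\red \to B \to Q \to 0$, where $Q$ is supported in codimension $\ge 1$ on $\Spec(A_\red)$. Since $H^i_{\fm_A}(A_\red) = 0$ for $i \le 1$, the long exact sequence yields an injection $H^1_{\fm_A}(B) \hookrightarrow H^1_{\fm_A}(Q)$. Combining this with the fact that $Q$ is annihilated by the conductor of $A_\red$ in $B$, which contains an $A_\red$-regular element (as $A_\red$ has positive depth on $\fm_A$), one bounds $H^1_{\fm_A}(B) = C/B$ and realizes $C$ as a module-finite, reduced $B$-subalgebra of $K$, integral over $B$.

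The remaining claims are more formal. Since $\tilde{C}|_U = \tilde{B}|_U$ by construction, the map $\Spec(C) \to \Spec(B)$ is an isomorphism over $U$, hence birational; combined with finiteness and reducedness, it is a partial normalization. Upgrading this to a partial weak normalization reduces to checking that $C$ is local with purely inseparable residue field extension at the unique maximal ideal over $\fm_B$, which follows from $C$ sitting between $B$ and the weak normalization $B^{\flat}$ of $B$ in $K$, together with $A \to B$ being a partial weak normalization (so that purely inseparability is inherited along the tower $\kappa(\fm_A) \subseteq \kappa(\fm_B) \subseteq \kappa(\fn_C)$). For the depth claim, applying the universal exact sequence for local cohomology to $C$ itself and using $\tilde{C}|_U = \tilde{B}|_U$ automatically gives $H^0_{\fm_A}(C) = H^1_{\fm_A}(C) = 0$, so $\depth_{\fm_C}(C) = \depth_{\fm_A}(C) \ge 2$. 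The main obstacle in this argument is the module-finiteness of $C$ over $B$ without invoking excellence or Nagata-type hypotheses, which is precisely where the depth assumption $\depth(A_\red) \ge 2$ enters in an essential way.
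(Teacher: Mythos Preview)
Your conductor argument for module-finiteness is correct and in fact more elementary than the paper's: from $H^1_{\fm_A}(B) \hookrightarrow H^1_{\fm_A}(Q)$ and $tQ = 0$ you get $t\cdot(C/B) = 0$, hence $C \subseteq t^{-1}B$, which is finitely generated. The paper instead invokes Koll\'ar's finiteness criterion \cite[Thm.\ 2]{Kol17}, reducing to the finiteness of $\Gamma(U,\widetilde{A/\fp}\rvert_U)$ for $\fp \in \Ass_A(B) = \Ass_A(A)$, and then to $\Gamma(U,\tilde{A}\rvert_U) = A$ via $\depth(A_\red) \ge 2$. Your route is shorter here.

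However, there is a genuine gap in your argument that $\Spec(C) \to \Spec(B)$ is a partial weak normalization. You assert that $C$ sits between $B$ and the weak normalization $B^\flat$, and then read off locality of $C$ and pure inseparability of $\kappa(\fm_B) \subseteq \kappa(\fn_C)$ from the sandwich $B \subseteq C \subseteq B^\flat$. But the containment $C \subseteq B^\flat$ is exactly what is at stake: an element of $C = \Gamma(U,\tilde{B}\rvert_U)$ is only constrained to lie in $B_\fp$ for $\fp \neq \fm_A$, and nothing you have written controls its image in the residue fields over $\fm_B$. Likewise, the ``tower'' remark is circular: knowing $\kappa(\fm_A) \subseteq \kappa(\fm_B)$ is purely inseparable says nothing about $\kappa(\fm_B) \subseteq \kappa(\fn_C)$ unless you already know $\kappa(\fm_A) \subseteq \kappa(\fn_C)$ is purely inseparable, which you have not established.

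The paper handles this by base-changing to the strict Henselization $A' = A^\sh$. Two things are then automatic: the residue field $k_{A'}$ is separably closed, so \emph{any} residue field extension over it is purely inseparable; and $C' = C \otimes_A A'$, being finite over the Henselian ring $A'$, splits as a product of local rings. To show this product has one factor, the paper uses Hartshorne's connectedness theorem (valid because $\depth(A') \ge 2$) to see that the punctured spectrum $U'$ is connected, hence so is its homeomorphic image $V' \subseteq \Spec(B')$, and the support of $C' \simeq \Gamma(U',\widetilde{B'}\rvert_{U'})$ is the closure of the preimage of $V'$. This is where the depth hypothesis does real work beyond module-finiteness; your sketch does not supply a substitute for it.
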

\par Here, $\tilde{\cdot}$ denotes the sheaf associated to a module on
an affine scheme.
\begin{proof}
  Throughout this proof, if $R$ is a local ring, then we denote by $\fm_R$ and
  $k_R$ the maximal ideal and residue field of $R$, respectively.
  By replacing $A$ with its reduction $A_\red$, it suffices to consider the case
  when $A$ is reduced.
  \par We first show that $B \to C$ is module-finite, for which it suffices to
  show that $A \to C$ is module-finite.
  By Koll\'ar's finiteness theorem \cite[Thm.\ 2]{Kol17}, $(5) \Rightarrow (1)$,
  it suffices to show
  that for every $\fp \in \Ass_A(B)$, the local hull
  \[
    \Gamma\bigl(U,(A/\fp)^\sim\rvert_U\bigr)
  \]
  is a finitely generated $A$-module.
  Since $A$ and $B$ are reduced and $\Spec(B) \to \Spec(A)$ is a homeomorphism,
  we see that $\Ass_A(A) = \Ass_A(B)$.
  Thus, applying Koll\'ar's finiteness theorem \cite[Thm.\ 2]{Kol17}, $(1)
  \Rightarrow (5)$ to $F = \cO_{\Spec(A)}$, it suffices to show that the local
  hull $\Gamma(U,\tilde{A}\rvert_U)$
  is a finitely generated $A$-module.
  But this is automatic since the natural homomorphism $A \to
  \Gamma(U,\tilde{A}\rvert_U)$ is an isomorphism by \cite[Lem.\
  14]{Kol17}, using the condition $\depth(A) \ge 2$.
  \par We now show that $\Spec(C) \to \Spec(B)$ is a universal homeomorphism.
  Let $A' \coloneqq A^\sh$ be a strict Henselization of $A$, and set $U'
  \coloneqq \Spec(A') - \{\fm_{A'}\}$.
  Denoting $B' \coloneqq B \otimes_A A^\sh$ and $C' \coloneqq C \otimes_A
  A^\sh$, it suffices to show that the morphism
  \begin{equation}\label{eq:specbprimecprime}
    \Spec(C') \longrightarrow \Spec(B')
  \end{equation}
  is a universal homeomorphism by \cite[Cor.\ 2.6.4$(iv)$]{EGAIV2} since $A \to
  A'$ is faithfully flat.
  Setting
  \[
    U' \coloneqq \Spec(A') - \{\fm_{A'}\} \qquad \text{and} \qquad
    V' \coloneqq \Spec(B') - \{\fm_{B'}\},
  \]
  we see that \eqref{eq:specbprimecprime} is an isomorphism over $V'$, and hence
  it suffices to show that there is only one prime ideal in $C'$ lying over
  $\fm_{B'}$, and that the residue field extension $k_{B'} \to k_{C'}$ is
  purely inseparable \cite[Cor.\ 18.12.11]{EGAIV4}.
  First, since $\depth(A') \ge 2$ by \cite[Prop.\ 1.2.16$(a)$]{BH98},
  Hartshorne's connectedness theorem \cite[Thm.\ 5.10.7]{EGAIV2} implies that
  the punctured spectrum $U'$ is connected, and thus, $V'$ is also connected by
  the fact that $\Spec(B') \to \Spec(A')$ is a homeomorphism.
  By the Henselian property of $A' = A^\sh$, the semi-local ring $C'$ is a
  direct product of local rings \cite[Prop.\ 18.5.9$(ii)$]{EGAIV4}.
  But the fact that $V'$ is connected forces there to be only one factor in this
  decomposition, since the morphism
  \eqref{eq:specbprimecprime} is an isomorphism over $V'$, and the support of
  \begin{equation}\label{eq:cprimeiso}
    C' \simeq \Gamma\bigl(U',\tilde{B'}\rvert_{U'}\bigr)
  \end{equation}
  is the closure of the inverse image of $V'$ under \eqref{eq:specbprimecprime}
  by \cite[Def.\ 1, $(3')$]{Kol17}.
  Thus, $C'$ is a local ring, and
  there is therefore only one prime ideal in $C'$ lying over $\fm_{B'}$.
  Note that the isomorphism \eqref{eq:cprimeiso} follows from flat base change
  for local hulls \cite[(13)]{Kol17} since the preimage of $\{\fm_B\}$ in
  $\Spec(B')$ is $\{\fm_{B'}\}$.
  Finally, since $k_{A'}$ is separably closed, the residue field extension
  $k_{B'} \to k_{C'}$ is purely inseparable, and hence
  \eqref{eq:specbprimecprime} is a universal homeomorphism.
  \par We now note that
  \begin{equation}\label{eq:depthofc}
    \depth_{\fm_C}(C) = \depth_{\fm_A}(C) \ge 2,
  \end{equation}
  where the first equality is \cite[Ch.\ 0, Prop.\ 16.4.8]{EGAIV1}, and the
  second inequality follows from \cite[Lem.\ 14]{Kol17} since $C$ is
  module-finite over $A$.
  Moreover, the morphism $\Spec(C) \to \Spec(B)$ is birational since it induces
  an isomorphism between $\Spec(B) - \{\fm_B\}$ and $\Spec(C)
  - \{\fm_C\}$, which are dense in $\Spec(B)$ and $\Spec(C)$,
  respectively.
  It remains to show that $C$ is reduced.
  Since $\Spec(C) \to \Spec(B)$ is birational, we see that $C$ satisfies
  $(R_0)$, and satisfies $(S_1)$ everywhere except possibly at $\fm_C$.
  But \eqref{eq:depthofc} implies
  \[
    \depth_{\fm_C}(C) \ge 2 \ge 1 = \min\bigl\{1,\dim(C)\bigr\}.
  \]
  Thus, $C$ satisfies $(S_1)$ at $\fm_C$, and $C$ is therefore reduced.
\end{proof}
The following result allows us to restrict a partial weak normalization to a
reduced Cartier divisor.
\begin{lemma}[cf.\ {\cite[Prop.\ 4.8]{BF93}}]\label{lem:bfprop48}
  Let $(A,\fm_A)$ be a noetherian local ring, and let $t \in \fm$ be a
  nonzerodivisor such that $A/tA$ is reduced of dimension $\ge1$.
  Let $B$ be a module-finite
  $A$-algebra such that $\Spec(B) \to \Spec(A)$ is a partial weak normalization
  that is an isomorphism along $V(t) - \{\fm_A\}$.
  Then, the local hull
  \[
    C \coloneqq \Gamma\bigl(U,\tilde{B}\rvert_U\bigr)
  \]
  is a module-finite local $B$-algebra such that the composition
  \[
    \Spec(C/tC) \overset{h}{\longrightarrow} \Spec(B/tB)
    \overset{g}{\longrightarrow} \Spec(A/tA)
  \]
  is a partial weak normalization.
\end{lemma}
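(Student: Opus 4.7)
The plan is to invoke Lemma \ref{lem:bfprop47} to construct the ring $C$ and then to verify each of the defining properties of a partial weak normalization for the composition $\Spec(C/tC) \to \Spec(A/tA)$.

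First, I check the hypotheses of Lemma \ref{lem:bfprop47}. Since $t \in \fm_A$ is a nonzerodivisor and $A/tA$ is reduced, Krull's intersection theorem shows that $A$ itself is reduced, so $A_\red = A$. Because $A/tA$ is local, reduced, and of dimension at least $1$, its maximal ideal is not a minimal (hence not an associated) prime, giving $\depth(A/tA) \ge 1$; since $t$ is a nonzerodivisor on $A$, this yields $\depth(A) \ge 2$. Lemma \ref{lem:bfprop47} therefore produces a module-finite local $B$-algebra $C$ with $\depth(C) \ge 2$ such that $\Spec(C) \to \Spec(B)$ is a partial weak normalization. Composing with the given partial weak normalization $\Spec(B) \to \Spec(A)$, one obtains that $\Spec(C) \to \Spec(A)$ is itself a partial weak normalization, and in particular a universal homeomorphism.

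Next, I verify that $t$ is a nonzerodivisor on $C$: since $C$ is reduced, its zerodivisors form the union of its minimal primes; since $\Spec(C) \to \Spec(A)$ is a homeomorphism, the minimal primes of $C$ correspond bijectively to those of $A$; and $t$ lies in no minimal prime of $A$ because $A$ is reduced and $t$ is a nonzerodivisor. I then show that $C/tC$ is reduced via Serre's criterion. Outside $\fm_C$, the local hull construction provides an isomorphism $\tilde{C}\rvert_U \cong \tilde{B}\rvert_U$, and by hypothesis $\Spec(B) \to \Spec(A)$ is an isomorphism along $V(t) \cap U$, so $C/tC$ and $A/tA$ agree on $U$; this yields $(R_0)$ and $(S_1)$ at every prime other than $\fm_C$. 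At $\fm_C$ itself, the identity $\depth_{\fm_C}(C/tC) = \depth_{\fm_C}(C) - 1 \ge 1$ combined with $\dim(C/tC) = \dim(A/tA) \ge 1$ gives $(S_1)$ at $\fm_C$.

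Finally, I verify the three defining properties of a partial weak normalization for $\Spec(C/tC) \to \Spec(A/tA)$: integrality is immediate from module-finiteness of $A \to C$; universal homeomorphy follows by base change along $\Spec(A/tA) \hookrightarrow \Spec(A)$ from the universal homeomorphism $\Spec(C) \to \Spec(A)$; and for birationality, since $\fm_C$ is not a minimal prime of $C/tC$ (as $\dim(C/tC) \ge 1$), every minimal prime of $C/tC$ lies in $U$, where the identification $C/tC \cong A/tA$ provides a bijection of minimal primes inducing residue-field isomorphisms. The main obstacle is the reducedness of $C/tC$, for which one must carefully combine the depth bound from Lemma \ref{lem:bfprop47} with the identification $\tilde{C}\rvert_U \cong \tilde{B}\rvert_U$ on the punctured spectrum to apply Serre's $(R_0) + (S_1)$ criterion; the remaining verifications are routine once this setup is in place.
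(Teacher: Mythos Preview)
Your proof is correct and follows essentially the same approach as the paper: apply Lemma~\ref{lem:bfprop47} to construct $C$, then verify the defining properties of a partial weak normalization for $\Spec(C/tC) \to \Spec(A/tA)$ by checking reducedness of $C/tC$ via Serre's criterion, using the identification with $A/tA$ on the punctured spectrum together with the depth bound at the closed point. Your justifications for $t$ being a nonzerodivisor on $C$ (via minimal primes and the homeomorphism $\Spec(C)\to\Spec(A)$) and for birationality (directly from the identification $C/tC \cong A/tA$ on $\bar U$) are mild streamlinings of the paper's corresponding arguments, which instead use a total-ring-of-fractions diagram and a regularity-lifting step, respectively.
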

\begin{proof}
  Throughout this proof, if $R$ is a local ring, then we denote by $\fm_R$
  the maximal ideal of $R$.
  We note that $A$ is reduced since
  reducedness lifts from Cartier divisors
  \cite[Prop.\ 3.4.6]{EGAIV2}.
  \par Set $\bar{A} \coloneqq A/tA$ and $\bar{C} \coloneqq C/tC$.
  We have
  \[
    \depth(\bar{A}) \ge \min\bigl\{1,\dim(\bar{A})\bigr\} \ge 1
  \]
  since $A/tA$ satisfies $(S_1)$ \cite[Prop.\ 5.8.5]{EGAIV2}.
  Thus, we have $\depth(A) \ge 2$ by \cite[Prop.\
  1.2.10$(d)$]{BH98}, and hence $\Spec(C) \to \Spec(B)$ is a partial weak
  normalization by Lemma \ref{lem:bfprop47}.
  Since $g \circ h$ is an integral universal homeomorphism by base change, to
  show that $g \circ h$ is a partial weak normalization, it suffices to show
  that $g \circ h$ is birational and that $\bar{C}$ is reduced.
  \par We start by showing that $g \circ h$ is birational.
  Since $g \circ h$ is a universal homeomorphism and $\bar{A}$ is reduced, to
  show that $g \circ h$ is birational, it suffices to show that $\bar{A} \to
  \bar{C}$ localizes to
  an isomorphism $\bar{A}_{\bar{\fp}} \overset{\sim}{\to}
  \bar{C}_{\bar{\fp}}$ at every minimal
  prime $\bar{\fp} \subseteq \bar{A}$.
  Let $\fp \subseteq A$ be the prime in $A$ corresponding to such a minimal
  prime $\bar{\fp}$.
  Since regularity lifts from Cartier divisors \cite[Ch.\ 0, Cor.\
  17.1.8]{EGAIV1} and $\bar{A}$ is reduced, we see that
  $A_\fp$ is regular, in which case the homomorphism $A \to C$ induces an
  isomorphism $A_\fp \overset{\sim}{\to} C_\fp$ by the fact that regular rings
  are weakly normal; see \eqref{eq:varnorimplications}.
  Thus, $g \circ h$ is birational.
  \par We now show that $\bar{C}$ is reduced.
  By our assumption that $\Spec(B) \to \Spec(A)$ is an isomorphism
  along $V(t) - \{\fm_A\}$, we know that $\Spec(C) \to \Spec(B)
  \to \Spec(A)$ is an isomorphism along $V(t) - \{\fm_A\}$ by
  \cite[Def.\ 1, $(4')$]{Kol17}.
  Since $\bar{A}$ is reduced and $g \circ h$ is birational,
  we see that $\bar{A} \to \bar{C}$ is injective by \cite[Cor.\ 1.2.6]{EGAInew}.
  Thus, the homomorphism $\bar{A} \to \bar{C}$ restricts to an isomorphism
  \[
    \tilde{\bar{A}}\rvert_{\bar{U}} \overset{\sim}{\longrightarrow}
    \tilde{\bar{C}}\rvert_{\bar{U}}
  \]
  over $\bar{U} \coloneqq \Spec(\bar{A}) - \{\fm_{\bar{A}}\}$.
  Since $\bar{A}$ is reduced, this shows that $\bar{C}$ satisfies $(R_0)$, and
  satisfies $(S_1)$ everywhere except possibly at $\fm_{\bar{C}}$.
  It suffices to show $\depth(\bar{C}) \ge 1$, since this would imply
  \[
    \depth(\bar{C}) \ge \min\bigl\{1,\dim(\bar{C})\bigr\} = 1
  \]
  by the assumption that $\dim(\bar{A}) = \dim(\bar{C}) \ge 1$.
  Note that $\depth(C) \ge 2$ by Lemma \ref{lem:bfprop47}, and hence it suffices
  to show that $t$ is a nonzerodivisor on $C$ by \cite[Prop.\
  1.2.10$(d)$]{BH98}.
  Consider the commutative diagram
  \[
    \begin{tikzcd}
      A \rar\dar[hook] & C\dar[hook]\\
      \Frac(A) \rar{\sim} & \Frac(C)
    \end{tikzcd}
  \]
  where the isomorphism of total rings of fractions on the bottom holds by the
  birationality of $g \circ h$ and the two vertical homomorphisms are injective
  since $A$ and $C$ are reduced.
  Since $A \to \Frac(A)$ is flat, it maps $t$ to a nonzerodivisor on $\Frac(A)$.
  By the commutativity of the diagram, $t$ is therefore a nonzerodivisor on $C$.
\end{proof}
We can now show that weak normality satisfies
$(\hyperref[cond:deforms]{\textup{R}_{\textup{IV}}})$.
This statement is due to Bingener and Flenner in the excellent
case \cite{BF93}.
\begin{proposition}[cf.\ {\citeleft\citen{BF93}\citemid Cor.\ 4.1\citeright}]
  \label{prop:weaklynormaldeforms}
  Let $(A,\fm)$ be a noetherian local ring, and let $t \in \fm$ be a
  nonzerodivisor.
  If $A/tA$ is weakly normal, then $A$ is weakly normal.
\end{proposition}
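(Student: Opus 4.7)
The plan is to induct on $\dim A$, using Lemmas \ref{lem:bfprop47} and \ref{lem:bfprop48} together with the localization property $(\hyperref[cond:generizes]{\textup{R}_{\textup{V}}})$ for weak normality (which is known; see Table \ref{table:condslist}). The base case is $\dim A = 1$: since $A/tA$ is weakly normal, it is reduced, and being zero-dimensional it must be a field, so $\fm = tA$ and $A$ is a DVR, which is normal and hence weakly normal.

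For the inductive step, assume $\dim A \ge 2$, so that $\dim(A/tA) \ge 1$ (the hypothesis of Lemma \ref{lem:bfprop48}). Fix an arbitrary finite partial weak normalization $g\colon \Spec(B) \to \Spec(A)$; the goal is to show $A \to B$ is an isomorphism. The key preliminary step is to verify that $g$ is already an isomorphism over $V(t) - \{\fm\}$. For any prime $\fp \in V(t)$ with $\fp \ne \fm$, the ring $A_\fp/tA_\fp \simeq (A/tA)_\fp$ is weakly normal by $(\hyperref[cond:generizes]{\textup{R}_{\textup{V}}})$, the element $t/1$ remains a nonzerodivisor in $\fm_{A_\fp}$, and $\dim A_\fp < \dim A$. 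The inductive hypothesis then gives that $A_\fp$ is weakly normal; since being a finite partial weak normalization is stable under localization, $B_\fp$ is a finite partial weak normalization of $A_\fp$, forcing $B_\fp = A_\fp$.

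With this iso condition in place, I would apply Lemma \ref{lem:bfprop48} directly to $B$: the output is a module-finite local $B$-algebra $C = \Gamma(U, \tilde{B}\rvert_U)$ on $U = \Spec(A) - \{\fm\}$ such that $\Spec(C/tC) \to \Spec(A/tA)$ is a partial weak normalization. Since $A/tA$ is weakly normal, this map must be an isomorphism, giving $C = A + tC$. Rewriting this as $C/A = t \cdot (C/A)$ in the finitely generated $A$-module $C/A$ and applying Nakayama's lemma (since $t \in \fm$) yields $C = A$. The natural inclusions $A \hookrightarrow B \hookrightarrow C$ (the second injective because $B$ is reduced and its minimal primes all lie in $U$) then force $B = A$, completing the induction.

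The main obstacle is the first step of the inductive case, namely reducing to the setting where Lemma \ref{lem:bfprop48} applies by showing $g$ is an isomorphism along $V(t) - \{\fm\}$. This is precisely where the dimension induction combines with localization of weak normality to feed back into itself and produce the hypothesis demanded by the Bingener--Flenner deformation machinery; once this is in hand, the Nakayama argument and the explicit hull construction supplied by Lemmas \ref{lem:bfprop47} and \ref{lem:bfprop48} close out the proof mechanically.
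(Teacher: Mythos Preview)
Your proof is correct and follows essentially the same approach as the paper's: both arguments use Lemmas \ref{lem:bfprop47} and \ref{lem:bfprop48} to produce the local hull $C$, invoke weak normality of the Cartier divisor to force $A/tA \to C/tC$ to be an isomorphism, and finish with Nakayama. The only difference is the induction variable---you induct on $\dim A$ across all rings, while the paper fixes $A$ and $B$ and inducts on $\height(\bar{\fp})$ for primes $\fp \in V(t)$, showing $A_\fp \to B_\fp$ is an isomorphism at each stage; these two schemes are equivalent, and your formulation is arguably cleaner since the inductive hypothesis is exactly the statement of the proposition.
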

\begin{proof}
  We want to show that for every module-finite $A$-algebra $B$ such that
  $\Spec(B) \to \Spec(A)$ is a partial weak normalization, the homomorphism $A
  \to B$ is an isomorphism.
  We will show that $A_\fp \to B_\fp$ is an isomorphism for every prime ideal
  $\fp \subseteq A$ containing $t$,
  which would suffice since $\fm$ is a prime ideal containing $t$.
  \par Set $\bar{A} \coloneqq A/tA$ and $\bar{B} \coloneqq B/tB$.
  We induce on the height of $\bar{\fp} \coloneqq \fp\bar{A}$.
  If $\height(\bar{\fp}) = 0$, then $\bar{A}_{\bar{\fp}} \simeq A_\fp/tA_\fp$
  is regular since $\bar{A}$ is reduced by \eqref{eq:varnorimplications}, and
  hence $A_\fp$ is regular since regularity lifts from Cartier divisors
  \cite[Ch.\ 0, Cor.\ 17.1.8]{EGAIV1}.
  Thus, $A_\fp$ is weakly normal by \eqref{eq:varnorimplications}, and $A_\fp
  \to B_\fp$ is an isomorphism.
  \par Now suppose that $\height(\bar{\fp}) > 0$.
  By the inductive hypothesis, we know that $A_\fp \to B_\fp$ is an isomorphism
  along $V(t) - \{\fp A_\fp\} \subseteq \Spec(A_\fp)$.
  Applying Lemmas \ref{lem:bfprop47} and \ref{lem:bfprop48} on $A_\fp$, the
  local hull
  \[
    C \coloneqq \Gamma\bigl( \Spec(A_\fp) -
    \{\fp A_\fp\},\tilde{B}_\fp\rvert_{\Spec(A_\fp) -
    \{\fp A_\fp\}}\bigr)
  \]
  is module-finite over $A$ and
  yields a partial weak normalization $\Spec(C) \to \Spec(B)$ such that the
  composition
  \[
    \Spec(C/tC) \longrightarrow \Spec(\bar{B}_{\bar{\fp}}) \longrightarrow
    \Spec(\bar{A}_{\bar{\fp}})
  \]
  is a partial weak normalization.
  Since $\bar{A}_{\bar{\fp}} \simeq A_\fp/tA_\fp$ is weakly normal by
  \cite[Cor.\ IV.2]{Man80}, we see that this composition is an isomorphism.
  The lemma of Nakayama--Azumaya--Krull \cite[Thm.\ 2.2]{Mat89} then implies that $A_\fp \to C$ is an
  isomorphism.
  Since $B_\fp \to C$ is injective by \cite[Cor.\ 1.2.6]{EGAInew}, we see that
  $B_\fp \to C$ is an isomorphism as well, and hence $A_\fp \to B_\fp$ is also
  an isomorphism.
\end{proof}
Since we have seen that both weak normality and seminormality satisfy the
hypotheses in Theorems \ref{thm:grothlocprob} and \ref{thm:grothlocliftprob}
(see Table \ref{table:condslist} and \eqref{eq:varnorimplications}),
we can solve Problems \ref{prob:grothlocprob} and \ref{prob:grothliftprob} for
both weak normality and seminormality.
The proof of Problem \ref{prob:grothlocprob} for seminormality extends a result
of Shimomoto \cite{Shi17} in the finite type case.
\begin{corollary}[cf.\ {\cite[Cor.\ 3.4]{Shi17}}]\label{cor:thmaforwnsn}
  Grothendieck's localization problem \ref{prob:grothlocprob} and the local
  lifting problem \ref{prob:grothliftprob} hold for
  weak normality and seminormality.
\end{corollary}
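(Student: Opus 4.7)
The plan is to apply Theorem \ref{thm:grothlocprob} and Theorem \ref{thm:grothlocliftprob} directly to $\bR = \text{``weakly normal''}$ and $\bR = \text{``seminormal''}$, so the task reduces to verifying the permanence conditions $(\hyperref[cond:ascentphom]{\textup{R}'_{\textup{I}}})$, $(\hyperref[cond:descent]{\textup{R}_{\textup{II}}})$, $(\hyperref[cond:deforms]{\textup{R}_{\textup{IV}}})$, $(\hyperref[cond:generizes]{\textup{R}_{\textup{V}}})$, and $(\ref{cond:openness})$, together with the fact that regular local rings satisfy $\bR$, for each of these two properties.

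The implication ``regular $\Rightarrow$ $\bR$'' is immediate from the chain \eqref{eq:varnorimplications}. Conditions $(\hyperref[cond:ascentphom]{\textup{R}'_{\textup{I}}})$, $(\hyperref[cond:descent]{\textup{R}_{\textup{II}}})$, $(\hyperref[cond:generizes]{\textup{R}_{\textup{V}}})$, and $(\ref{cond:openness})$ for both weak normality and seminormality are collected in Table \ref{table:condslist} and will simply be cited from the literature (for instance, the ascent and descent statements for seminormality appear in work of Greco--Traverso and Hamann, and the openness of the seminormal and weakly normal loci on a complete local base is classical). The only condition that requires real work is the deformation condition $(\hyperref[cond:deforms]{\textup{R}_{\textup{IV}}})$.

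For weak normality, condition $(\hyperref[cond:deforms]{\textup{R}_{\textup{IV}}})$ is exactly the content of Proposition \ref{prop:weaklynormaldeforms}, which we have just proved by an adaptation of the Bingener--Flenner argument, avoiding excellence hypotheses via Koll\'ar's finiteness theorem for local hulls. For seminormality, $(\hyperref[cond:deforms]{\textup{R}_{\textup{IV}}})$ is already available in the literature (again listed in Table \ref{table:condslist}); the weakly normal case is the harder of the two, and having settled it is what makes this uniform treatment possible.

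With all five conditions verified, Theorem \ref{thm:grothlocprob} yields the solution to Grothendieck's localization problem \ref{prob:grothlocprob} for both weak normality and seminormality, and Theorem \ref{thm:grothlocliftprob} yields the solution to the local lifting problem \ref{prob:grothliftprob}. The main obstacle was the deformation statement for weak normality, already dispensed with in Proposition \ref{prop:weaklynormaldeforms}, so the present proof is essentially just the bookkeeping of assembling these ingredients.
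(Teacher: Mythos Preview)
Your proposal is correct and matches the paper's approach exactly: the paper simply observes that Table~\ref{table:condslist} and the chain \eqref{eq:varnorimplications} supply all the hypotheses of Theorems~\ref{thm:grothlocprob} and~\ref{thm:grothlocliftprob}, with Proposition~\ref{prop:weaklynormaldeforms} (and Heitmann's result for seminormality) handling $(\hyperref[cond:deforms]{\textup{R}_{\textup{IV}}})$. One small point you leave implicit: Theorem~\ref{thm:grothlocliftprob} carries the extra hypothesis that $A/I$ is Nagata, but this is automatic here since weakly normal and seminormal both imply reduced by \eqref{eq:varnorimplications}, so geometrically $\bR$ formal fibers for $A/I$ force geometrically reduced formal fibers, and hence $A/I$ is Nagata by Remark~\ref{rem:zariskinagata}.
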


\subsection{Terminal, canonical, and rational singularities}
We now consider Problem \ref{prob:grothlocprob} for terminal, canonical, and
rational singularities in equal characteristic zero,
using the version of Theorem
\ref{thm:grothlocprob} in \cite[Prop.\ 7.9.8]{EGAIV2}.
\par See \cite[Def.\ 2.8]{Kol13} for the definition of terminal and canonical
singularities, which apply to excellent noetherian schemes with
dualizing complexes.
For rational singularities, we will
work with the following definition:
\begin{citeddef}[{\citeleft\citen{KKMSD73}\citemid p.\ 51\citepunct
  \citen{Mur}\citemid Def.\ 7.2\citeright}]\label{def:ratsings}
  Let $A$ be a quasi-excellent local $\QQ$-algebra.
  We say that $A$ has \textsl{rational singularities} if $A$ is normal
  and if for every resolution of singularities $f\colon W \to \Spec(A)$, we
  have $R^if_*\cO_W = 0$ for all $i > 0$.
  If $X$ is a locally noetherian $\QQ$-scheme whose local rings $\cO_{X,x}$ are
  quasi-excellent,
  we say that $X$ \textsl{has rational singularities} if $\cO_{X,x}$ has
  rational singularities for every $x \in X$.
  By \cite[Lem.\ 7.3]{Mur}, a quasi-excellent local $\QQ$-algebra has
  rational singularities if and only if $\Spec(R)$ has rational
  singularities.
\end{citeddef}
\begin{remark}[{cf.\ \citeleft\citen{KKMSD73}\citemid p.\ 51\citepunct
  \citen{Kol13}\citemid Cor.\ 2.86\citeright}]\label{rem:ratsingsoneres}
  The object $\RR f_*\cO_W$ does not depend on the choice of resolution
  $f\colon W \to \Spec(A)$, since higher direct images of structure sheaves
  vanish for morphisms of noetherian schemes of finite type over a
  quasi-excellent local $\QQ$-algebra (one combines the strategy of \cite[(2) on
  pp.\ 144--145]{Hir64} with the version of elimination of indeterminacies in
  \cite[Ch.\ I, \S3, Main Thm.\ II$(N)$]{Hir64}).
  Thus, to show that a quasi-excellent local $\QQ$-algebra has rational
  singularities, it suffices to show that $A$ is normal and that there exists a
  resolution of singularities $f\colon W \to \Spec(A)$ such that
  $R^if_*\cO_W = 0$ for all $i > 0$.
  \par In particular, this shows that if $X$ is a quasi-excellent $\QQ$-scheme
  of finite Krull dimension,
  then the rational locus is open in $X$:
  Since resolutions of singularities
  exist in this context \cite[Thm.\ 1.1]{Tem08}, the rational locus is
  the normal locus intersected with the complement of $\bigcup_{i>0}
  \Supp(R^if_*\cO_W)$, where $f\colon W \to X$ is a resolution of singularities.
\end{remark}
\par We now show the following strong version of
$(\hyperref[cond:fgfieldext]{\textup{P}_{\textup{IV}}})$:
\begin{lemma}\label{lem:tercanratisgeom}
  Let $R$ be a quasi-excellent local $\QQ$-algebra,
  and let $k \to R$ be a homomorphism from a field of characteristic zero.
  \begin{enumerate}[label=$(\roman*)$]
    \item If $R$ has a dualizing complex, then $\Spec(R)$ has terminal (resp.\
      canonical) singularities if and only if $\Spec(R \otimes_k k')$ has
      terminal (resp.\ canonical) singularities for every finitely generated
      field extension $k \subseteq k'$.
    \item $\Spec(R)$ has rational singularities if and only if $\Spec(R
      \otimes_k k')$ has rational singularities for every finitely generated
      field extension $k \subseteq k'$.
  \end{enumerate}
\end{lemma}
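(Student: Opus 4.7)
The plan is to observe that the ``if'' direction in both statements is immediate by taking $k' = k$, so the content is that rational (resp.\ terminal, canonical) singularities are preserved under base change along any finitely generated field extension $k \subseteq k'$ in characteristic zero. The key observation is that in characteristic zero every finitely generated field extension $k \subseteq k'$ is separable, and $\Spec(k') \to \Spec(k)$ is a regular morphism: indeed, $k'$ is a localization of a finitely generated $k$-algebra, and for any finite extension $k \subseteq L$ the ring $k' \otimes_k L$ is a finite product of separable field extensions of $k'$, hence regular. Consequently, setting $R' \coloneqq R \otimes_k k'$, the base change $\pi\colon \Spec(R') \to \Spec(R)$ is essentially of finite type and flat with geometrically regular fibers, so in particular $R'$ is quasi-excellent.

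For part $(ii)$, assume $\Spec(R)$ has rational singularities. Choose a resolution of singularities $f\colon W \to \Spec(R)$, which exists in this setting by \cite[Thm.\ 1.1]{Tem08}, and form the base change $f'\colon W' \coloneqq W \times_{\Spec(R)} \Spec(R') \to \Spec(R')$. Then $f'$ is proper and birational by flat base change along $\pi$, and $W'$ is regular because regularity is preserved under the regular morphism $W' \to W$. Normality of $R'$ follows because normality is preserved under regular base change. For higher direct images, flat base change gives $R^i f'_* \cO_{W'} \simeq (R^i f_* \cO_W) \otimes_R R' = 0$ for every $i > 0$. Localizing at any point $\fq \in \Spec(R')$ preserves both the resolution property and the vanishing, so $R'_\fq$ has rational singularities by Remark \ref{rem:ratsingsoneres}.

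For part $(i)$, apply a log resolution $f\colon Y \to \Spec(R)$ and form the base change $f'\colon Y' \to \Spec(R')$. Again, $Y'$ is regular, and because $\pi\colon Y' \to Y$ is a regular morphism the pullback of any simple normal crossings divisor remains simple normal crossings, so $f'$ is a log resolution of $\Spec(R')$. The relative dualizing sheaf is compatible with flat base change, giving $\omega_{Y'/\Spec(R')} \simeq \pi^*\omega_{Y/\Spec(R)}$, so for each exceptional prime divisor $E$ of $f$, every prime component $E'$ of $\pi^{-1}(E)$ is an exceptional divisor for $f'$ with the same discrepancy $a(E', \Spec(R')) = a(E, \Spec(R))$. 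Localizing at an arbitrary point of $\Spec(R')$ and invoking the computation of terminality (resp.\ canonicity) via any log resolution, we conclude that each local ring of $R'$ has terminal (resp.\ canonical) singularities.

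The main obstacle is verifying that a log resolution remains a log resolution after base change and that discrepancies transform in the expected way. This is where characteristic zero is essential: the morphism $\Spec(k') \to \Spec(k)$ is regular precisely because every field extension is separable in characteristic zero, which is what enables the simultaneous preservation of regularity, simple normal crossings, and the compatibility of dualizing sheaves under base change.
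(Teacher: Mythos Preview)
Your proof is correct and follows essentially the same approach as the paper. For part $(ii)$ the arguments are nearly identical; for part $(i)$ the paper simply invokes \cite[Prop.\ 2.15]{Kol13} to conclude that terminal and canonical singularities are preserved under the essentially smooth surjective morphism $\Spec(R \otimes_k k') \to \Spec(R)$ (also noting that $R \otimes_k k'$ inherits a dualizing complex via \cite[(2) on p.\ 299]{Har66}), whereas you unpack the content of that proposition by base-changing a log resolution and checking that discrepancies are preserved.
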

\begin{proof}
  $\Leftarrow$ holds by setting $k = k'$.
  It therefore suffices to show the converse.
  The terminal and canonical cases follow from \cite[Prop.\ 2.15]{Kol13}, since
  $R \to R \otimes_k k'$ is surjective and essentially smooth by base
  change \citeleft\citen{EGAIV2}\citemid Prop.\ 6.8.3$(iii)$\citepunct
  \citen{EGAIV4}\citemid Thm.\ 17.5.1\citeright.
  Here, we note that $R \otimes_k k'$ has a dualizing
  complex by \cite[(2) on p.\ 299]{Har66}.
  For the rational case, we use the criterion for rational singularities stated
  in Remark \ref{rem:ratsingsoneres}.
  Let $f\colon W \to \Spec(R)$ be a resolution of singularities.
  Since $R \to R \otimes_k k'$ is flat with geometrically regular fibers, the
  morphism $f'\colon W \otimes_k k' \to \Spec(R \otimes k')$ is a resolution of
  singularities \cite[Thm.\ 23.7]{Mat89} and satisfies $R^if'_*\cO_{W \otimes_k
  k'} = 0$ for all $i > 0$ by flat base change.
  Finally, $R \otimes_k k'$ is normal by \cite[Cor.\ to Thm.\ 23.9]{Mat89}.
\end{proof}
The main ingredient for terminal and canonical singularities is the following
version of $(\hyperref[cond:deforms]{\textup{R}_{\textup{IV}}})$, which follows
from the deformation results for complex varieties due to Kawamata \cite{Kaw99}
for canonical singularities and Nakayama \cite{Nak04} for terminal
singularities.
Note that similar results do not hold for klt or log canonical singularities
without assuming that the canonical divisor on $A$ is $\QQ$-Cartier;
see \cite[Ex.\ 9.1.7 and Rem.\ 9.1.15]{Ish18}.
\begin{proposition}[cf.\ {\citeleft\citen{Kaw99}\citemid Main Thm.\citepunct
  \citen{Nak04}\citemid Ch.\ VI, Thm.\ 5.2(2)\citeright}]
  \label{prop:tercanratdeform}
  Let $(A,\fm)$ be a local ring essentially of finite type over a field $k$ of
  characteristic zero, and let $t \in \fm$ be a nonzerodivisor.
  If $A/tA$ has terminal (resp.\ canonical) singularities, then $A$
  has terminal (resp.\ canonical) singularities.
\end{proposition}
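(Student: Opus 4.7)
The plan is to reduce to the case $k = \CC$ and invoke the cited deformation results of Kawamata and Nakayama, which apply to complex varieties. The reduction proceeds in two stages: first spreading out over a finitely generated $\QQ$-subalgebra of $k$, then base-changing along an embedding into $\CC$.

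First, I would realize $A = B_\fp$ for some finitely generated $k$-algebra $B$, and choose a finitely generated $\QQ$-subalgebra $R_0 \subseteq k$ containing all coefficients needed to define a presentation of $B$, a generating set for $\fp$, and a lift of $t$. This produces a model $B_0$ over $R_0$ together with a prime $\fp_0 \subseteq B_0$ and an element $t_0 \in B_0$; after localizing $R_0$ at finitely many nonzero elements, I may assume that $A_0 \coloneqq (B_0)_{\fp_0}$ is a noetherian local ring in which $t_0$ is a nonzerodivisor, and that the family $\Spec B_0 \to \Spec R_0$ has reasonable geometric properties on all fibers.

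Second, I would fix an embedding of $\Frac R_0$ into $\CC$ (possible since $\Frac R_0$ is a countable field of characteristic zero) and base change to obtain a local ring $A_\CC$ essentially of finite type over $\CC$, equipped with the Cartier divisor cut by the image $t_\CC$ of $t_0$. I would then argue that $\Spec(A_\CC/t_\CC A_\CC)$ has canonical (resp.\ terminal) singularities. This follows from the hypothesis that $\Spec(A/tA)$ does, together with Lemma \ref{lem:tercanratisgeom} and a constructibility argument: the set of points in $\Spec B_0/t_0 B_0$ whose fibers over $\Spec R_0$ have canonical (resp.\ terminal) singularities is constructible, so the condition transfers from the fiber over $k$ to the fiber over $\CC$.

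Third, I would apply Kawamata's theorem (resp.\ Nakayama's theorem) to the Cartier divisor $V(t_\CC) \subseteq \Spec A_\CC$ to conclude that $\Spec A_\CC$ itself has canonical (resp.\ terminal) singularities; here we also invoke that canonical and terminal singularities are preserved under passage between a complex algebraic variety and its germs at closed points, so that the cited analytic/algebraic deformation results apply. Finally, using Lemma \ref{lem:tercanratisgeom} and the same constructibility argument in reverse, I would descend the conclusion from $A_\CC$ to $A$.

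The main obstacle is managing the bookkeeping involved in shuttling the canonical (resp.\ terminal) property between the three local rings $A$, $A_0$, and $A_\CC$. In particular, the field extension $\Frac R_0 \subseteq k$ is typically not finitely generated, so Lemma \ref{lem:tercanratisgeom} cannot be applied directly to transfer the property between $A_0$ and $A$; instead one must appeal to constructibility of the canonical (resp.\ terminal) locus in the family over $\Spec R_0$, possibly factoring the base change through intermediate finitely generated subextensions and verifying that each intermediate transfer uses only finitely generated extensions.
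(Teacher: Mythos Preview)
Your overall strategy---reduce to $\CC$ and invoke Kawamata/Nakayama---matches the paper's, but you have made the reduction harder than necessary, and the ``main obstacle'' you flag is a symptom of that.

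The paper does not spread out over a finitely generated $\QQ$-subalgebra $R_0 \subseteq k$; it spreads out over a sub\emph{field} $k_0 \subseteq k$ that is finitely generated over $\QQ$. Concretely: after first passing from $A$ to a finite-type $k$-algebra $A'$ (using openness of the normal and terminal/canonical loci to shrink $\Spec A'$ so that $A'$ is a normal domain and $A'/t'A'$ is everywhere terminal/canonical), one simply takes $k_0$ to be the subfield of $k$ generated by the coefficients of a presentation of $A'$ and of $t'$, obtaining $A_0$ with $A' \simeq A_0 \otimes_{k_0} k$. Then one embeds $k_0 \hookrightarrow \CC$ and works with $A_0 \otimes_{k_0} \CC$. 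Since $k_0$, $k$, and $\CC$ are all fields, there is no family over a positive-dimensional base, hence no need for any constructibility argument.

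The transfer of the terminal/canonical property along the (possibly non--finitely-generated) extensions $k_0 \subseteq k$ and $k_0 \subseteq \CC$ is handled directly by the argument underlying Lemma~\ref{lem:tercanratisgeom}: in characteristic zero, any field extension $k_0 \subseteq K$ gives a faithfully flat map $A_0 \to A_0 \otimes_{k_0} K$ with geometrically regular fibers, so \cite[Prop.\ 2.15]{Kol13} (or a direct discrepancy computation, since a log resolution over $k_0$ base-changes to one over $K$ with identical discrepancies) shows that $\Spec A_0$ has terminal/canonical singularities if and only if $\Spec(A_0 \otimes_{k_0} K)$ does. This eliminates the bookkeeping you were worried about: you do not need to factor through intermediate finitely generated subextensions or invoke constructibility in a family. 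Your route via $R_0$ would work, but it trades a one-line invariance-under-field-extension statement for a genuinely more delicate spreading-out and constructibility argument.
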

\begin{proof}
  Note that $A$ is a normal domain by \cite[Prop.\ I.7.4]{Sey72} and
  \cite[Prop.\ 3.4.5]{EGAIV2}.
  \begin{step}\label{step:tercanratdeformft}
    It suffices to show that if $A'$ is a normal domain of finite type over $k$,
    and $t' \in A'$ is a nonzero element such that $A'/t'$ has terminal (resp.\
    canonical) singularities, then $A'$ has terminal (resp.\
    canonical) singularities along $V(t')$.
  \end{step}
  Since $A$ is essentially of finite type over $k$, there exists a ring $A'$ of
  finite type over $k$, a prime ideal $\fp \subseteq A'$ such that $A =
  A'_\fp$, and an element $t' \in A'$ such that $t'$ maps to $t \in A$.
  Since the normal locus in $\Spec(A')$ is open \cite[Cor.\ 6.13.5]{EGAIV2}, we
  can replace $\Spec(A')$ by an affine open subset containing $\fp$ to assume
  that $A'$ is normal domain and that $t'$ is nonzero.
  Since the locus over which $\Spec(A'/t')$ has terminal (resp.\ canonical)
  singularities is open by \cite[Cor.\ 2.12]{Kol13}, we can
  replace $A'$ by an affine open subset again to assume that $A'/t'$ has
  terminal (resp.\ canonical) singularities.
  Then, $A'$ has terminal (resp.\ canonical) singularities by
  assumption, and hence $A'_\fp$ has terminal (resp.\ canonical)
  singularities by \cite[Cors.\ 2.12]{Kol13}.
  \begin{step}
    It suffices to show that the statement in Step \ref{step:tercanratdeformft}
    holds when $k = \CC$.
  \end{step}
  Since $A$ is of finite type over $k$, there exists a subfield $k_0 \subseteq
  k$ that is a finitely generated field extension of $\QQ$ together with a ring
  $A_0$ of finite type over $k_0$ such that $A \simeq A_0 \otimes_{k_0} k$, and
  such that there exists an element $t_0 \in A_0$ mapping to $t \in A$ under the
  homomorphism $A_0 \to A$.
  Note that $A_0$ is a normal domain by \cite[Cor.\ to Thm.\ 23.9]{Mat89} and
  \cite[Prop.\ 2.1.14]{EGAIV2}.
  Since
  $A_0/t_0A_0 \otimes_{k_0} k \simeq A/tA$,
  applying Lemma \ref{lem:tercanratisgeom}, we know that $A_0/t_0A_0
  \otimes_{k_0} \CC$ has terminal (resp.\ canonical) singularities,
  and that moreover, $A$ has terminal (resp.\ canonical)
  singularities if $A_0 \otimes_{k_0} \CC$ does.
  Note that the image of $t_0$ in $A_0 \otimes_{k_0} \CC$ is a nonzerodivisor by
  flat base change, and that $A_0 \otimes_{k_0} \CC$ is normal by
  \cite[Cor.\ to Thm.\ 23.9]{Mat89}.
  Working one direct factor of $A_0 \otimes_{k_0} \CC$ at a time, the
  statement in Step \ref{step:tercanratdeformft} for $k = \CC$ implies that $A_0
  \otimes_{k_0} \CC$ has terminal (resp.\ canonical)
  singularities.
  \begin{step}
    The statement in Step \ref{step:tercanratdeformft} holds for $k = \CC$.
  \end{step}
  The terminal case is \cite[Ch.\ VI, Thm.\ 5.2(2)]{Nak04} and the canonical
  case is \cite[Main Thm.]{Kaw99} (see also
  \cite[Ch.\ VI, Thm.\ 5.2(1)]{Nak04}).
\end{proof}
For rational singularities, we prove
$(\hyperref[cond:deforms]{\textup{R}_{\textup{IV}}})$ in greater generality.
For the proof, we will work with the notion of pseudo-rational
rings defined below, which gives a characteristic-free version of
rational singularities.
\begin{citeddef}[{\cite[\S2]{LT80}}]\label{def:pseudorational}
  Let $(A,\fm)$ be a noetherian local ring of dimension $d$.
  We say that $A$ is \textsl{pseudo-rational} if
  \begin{enumerate}[label=$(\roman*)$,ref=\roman*]
    \item $A$ is normal;
    \item $A$ is Cohen--Macaulay;
    \item The $\fm$-adic completion $\widehat{A}$ of $A$ is reduced; and
    \item\label{def:pseudoratbiratcond}
      For every proper birational morphism $f\colon W \to \Spec(A)$
      with $W$ normal, if $E = f^{-1}(\{\fm\})$ is the closed fiber,
      then the canonical homomorphism
      \[
        H^d_\fm(A) = H^d_{\{\fm\}}(f_*\cO_W)
        \xrightarrow{\delta^d_f(\cO_W)}
        H^d_E(\cO_W)
      \]
      appearing as the edge map in the Leray--Serre spectral sequence for the
      composition of functors $\Gamma_{\{\fm\}} \circ f_* = \Gamma_E$
      is injective.
  \end{enumerate}
\end{citeddef}
We now prove $(\hyperref[cond:deforms]{\textup{R}_{\textup{IV}}})$ for rational
singularities using the strategy in \cite[Prop.\ 3.4]{MS}.
The case when $A$ is essentially of finite type over a field of characteristic
zero is due
to Elkik \cite{Elk78}.
\begin{proposition}[{cf.\ \cite[Thm.\ 5]{Elk78}}]\label{prop:ratdeform}
  Let $(A,\fm)$ be a quasi-excellent local $\QQ$-algebra, and let $t \in \fm$ be
  a nonzerodivisor.
  If $A/tA$ has rational singularities, then $A$ has rational singularities.
\end{proposition}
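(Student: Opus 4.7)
The plan is to reduce to the notion of pseudo-rationality introduced in Definition \ref{def:pseudorational}, which is characteristic-free and, in equal characteristic zero, coincides with rational singularities for quasi-excellent local $\QQ$-algebras. Concretely, I would argue first that for a quasi-excellent local $\QQ$-algebra $R$, having rational singularities (in the sense of Definition \ref{def:ratsings}) is equivalent to being pseudo-rational. The normal and Cohen--Macaulay conditions are either immediate or follow from Remark \ref{rem:ratsingsoneres} together with the existence of resolutions in characteristic zero, reducedness of $\widehat{R}$ follows from excellence of $\widehat{R}$, and the injectivity of the edge map $H^d_{\fm}(R) \to H^d_E(\cO_W)$ for a resolution $f\colon W\to\Spec(R)$ follows from Grauert--Riemenschneider-type vanishing $R^if_*\cO_W = 0$ for $i>0$ via the degeneration of the Leray spectral sequence. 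Hence it suffices to show that pseudo-rationality lifts from Cartier divisors for quasi-excellent local $\QQ$-algebras.

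I would next verify the easy conditions (i)--(iii) of Definition \ref{def:pseudorational} for $A$. Cohen--Macaulayness of $A$ from Cohen--Macaulayness of $A/tA$ is standard (since $t$ is a nonzerodivisor and regular sequences lift). Normality of $A$ follows from Serre's criterion: Cohen--Macaulayness gives $(S_n)$ for all $n$, and the $(R_1)$ condition descends from $A/tA$ together with the fact that $t$ is a nonzerodivisor, using \cite[Ch.\ 0, Cor.\ 17.1.8]{EGAIV1} (regularity lifts from Cartier divisors) at height-$1$ primes containing $t$, and automatic $(R_1)$ at primes not containing $t$ via a localization argument. Reducedness of $\widehat{A}$ follows since $\widehat{A}/t\widehat{A} \simeq \widehat{A/tA}$ is reduced (as $A/tA$ is rational, hence normal, hence its completion, being quasi-excellent, is reduced), and reducedness lifts from Cartier divisors by \cite[Prop.\ 3.4.6]{EGAIV2}.

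The main obstacle, and the heart of the argument, is establishing condition \ref{def:pseudoratbiratcond}: given a proper birational $f\colon W\to \Spec(A)$ with $W$ normal, show that $\delta^d_f(\cO_W)\colon H^d_{\fm}(A) \to H^d_E(\cO_W)$ is injective. The plan, following the strategy of Murayama--Schwede \cite[Prop.\ 3.4]{MS}, is to transport the problem to the Cartier divisor $\Spec(A/tA)$. Specifically, multiplication by $t$ on $\cO_W$ is still injective (since $W$ is normal, in particular torsion-free, and $t\neq 0$), yielding a short exact sequence $0\to \cO_W \xrightarrow{t} \cO_W \to \cO_{W}/t\cO_W \to 0$. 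Taking local cohomology along $E$, and comparing via the map $\delta^\bullet_f$ with the corresponding long exact sequence for $0\to A\xrightarrow{t} A\to A/tA\to 0$ on $\Spec(A)$, one obtains a commutative ladder. The pseudo-rationality of $A/tA$ applied to the closed fiber of $W\times_{\Spec(A)}\Spec(A/tA)\to\Spec(A/tA)$ (or to a suitable modification thereof producing a normal scheme) gives injectivity of the relevant edge map in dimension $d-1$, and a diagram chase together with the derivation theorem (reducing multiplication by $t$) then forces injectivity of $\delta^d_f(\cO_W)$. A subtlety here is that $W\times_{\Spec(A)}\Spec(A/tA)$ need not be normal, so one first passes to its normalization, using quasi-excellence of $A$ to guarantee that normalization is finite; the resulting comparison of cohomologies is controlled because the nilpotents and denormalization occur away from the generic points, hence do not affect top local cohomology. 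This concludes that $A$ is pseudo-rational, hence has rational singularities.
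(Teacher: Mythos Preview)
Your overall strategy matches the paper's: reduce to pseudo-rationality, verify conditions (i)--(iii) by standard lifting arguments, and establish condition $(\ref{def:pseudoratbiratcond})$ by comparing the long exact sequences in local cohomology arising from multiplication by $t$ on $A$ and on $\cO_W$, invoking the pseudo-rationality of $A/tA$ on a suitable normal modification of $W_t$. However, there is one genuine gap in your diagram chase.

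The chase requires not only that the edge map $H^{d-1}_\fm(A/tA)\to H^{d-1}_E(\cO_{W_t})$ be injective (which you correctly deduce from pseudo-rationality of $A/tA$ after passing to a normal model of $W_t$), but also that the connecting map $H^{d-1}_E(\cO_{W_t})\to H^d_E(\cO_W)$ be injective, i.e., that the bottom row begins with a zero. This is exactly where the paper invokes the Grauert--Riemenschneider-type vanishing $H^{d-1}_E(\cO_W)=0$ from \cite[Thm.\ B*$(i)$]{Mur}, which requires $W$ to be regular. Without this, an element $\bar\eta\in H^{d-1}_\fm(A/tA)$ could map to a nonzero class in $H^{d-1}_E(\cO_{W_t})$ that nonetheless dies in $H^d_E(\cO_W)$ because it comes from $H^{d-1}_E(\cO_W)$, and the argument collapses. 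Your sentence ``a diagram chase together with the derivation theorem \ldots\ then forces injectivity'' glosses over precisely this point.

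Consequently, you should not attempt to verify $(\ref{def:pseudoratbiratcond})$ for an arbitrary proper birational $f\colon W\to\Spec(A)$ with $W$ normal. Instead, take $W$ to be a resolution of singularities (available since $A$ is a quasi-excellent $\QQ$-algebra), so that Grauert--Riemenschneider applies; checking the edge map only for resolutions suffices because any proper birational normal $W$ is dominated by a resolution and the edge maps compose compatibly \cite[Prop.\ 1.12]{Smi97}. Your use of the normalization of $W_t$ in place of the paper's resolution $W'_t\to W_t$ is fine for extracting injectivity from the pseudo-rationality of $A/tA$, since Definition \ref{def:pseudorational}$(\ref{def:pseudoratbiratcond})$ only requires the source to be normal; the paper's choice of a resolution there is a convenience, not a necessity.
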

\begin{proof}
  By \cite[Rem.\ 7.4]{Mur}, a quasi-excellent local $\QQ$-algebra has rational
  singularities if and only if it is pseudo-rational.
  Since $A$ is normal and Cohen--Macaulay by \cite[Prop.\ I.7.4]{Sey72} and
  \cite[Thm.\ 2.1.3$(a)$]{BH98}, and $\widehat{A}$ is reduced by \cite[Cor.\ to
  Thm.\ 23.9]{Mat89}, it suffices to show that for every resolution of
  singularities $f\colon W \to \Spec(A)$, the homomorphism
  $\delta^d_f(\cO_W)$ is injective, where $d = \dim(A)$.
  \par Let $E = f^{-1}(\{\fm\})$.
  Set $W_t = f^{-1}(V(t))$, and let $g\colon W'_t \to W_t$ be a resolution
  of singularities, which exists by \cite[Ch.\ I, \S3, Main Thm.\
  I$(n)$]{Hir64}.
  Consider the commutative diagram
  \[
    \begin{tikzcd}
      0 \rar & H^{d-1}_\fm(A/tA) \rar\dar[hook] & H^d_\fm(A) \rar{t \cdot}
      \dar{\delta^d_f(\cO_W)} & H^d_\fm(A) \rar\dar{\delta^d_f(\cO_W)} & 0\\
      0 \rar & H^{d-1}_E(\cO_{W_t}) \rar\dar & H^d_E(\cO_W) \rar{t \cdot} &
      H^d_E(\cO_W) \rar & 0\\
      & H^{d-1}_{g^{-1}(E)}(\cO_{W'_t})
    \end{tikzcd}
  \]
  where the top half is obtained from \cite[Lem.\ 3.12]{Mur}.
  The top left arrow is injective since the composition in the left column is
  injective by the hypothesis that $A/tA$ has rational singularities, where we
  use the fact that the edge maps in Definition
  \ref{def:pseudorational}$(\ref{def:pseudoratbiratcond})$ behave well under
  composition of morphisms \cite[Prop.\ 1.12]{Smi97}.
  The rows are exact on the left by the fact that $A$ is Cohen--Macaulay and by
  the version of Grauert--Riemenschneider vanishing in \cite[Thm.\
  B*$(i)$]{Mur}, respectively.
  \par Now suppose there exists an element $0 \ne \eta \in
  \ker(\delta^d_f(\cO_W))$.
  Since every element in $H^d_\fm(A)$ is annihilated by a power of $t$, after
  multiplying $\eta$ by a power of $t$ we may assume that $t\eta = 0$, in which
  case $\eta$ lies in the image of $H^{d-1}_\fm(A/tA)$ in the top row.
  The commutativity of the diagram implies that the composition
  $H^{d-1}_\fm(A/tA) \to H^d_\fm(\cO_W)$ is injective.
  Since $\eta \in \ker(\delta^d_f(\cO_W))$ by assumption, this shows that $\eta
  = 0$, which is a contradiction.
\end{proof}
We can now solve Grothendieck's localization problem for terminal, canonical,
and rational singularities using \cite[Prop.\ 7.9.8]{EGAIV2}.
While our Theorems \ref{thm:grothlocprob} and \ref{thm:grothlocprobqe} apply to 
positive or mixed characteristic, we are not able to prove this result in
these contexts in part because we do not know whether Propositions
\ref{prop:tercanratdeform} and \ref{prop:ratdeform} hold in positive or mixed
characteristic.
\begin{corollary}\label{cor:locprobfortercanrat}
  Let $\varphi\colon A \to B$ be a flat local homomorphism of quasi-excellent
  local $\QQ$-algebras,
  where $A$ is quasi-excellent and $B$ is essentially of finite
  type over a field of characteristic zero.
  \begin{enumerate}[label=$(\roman*)$,ref=\roman*]
    \item\label{cor:locprobfortercan}
      Suppose that $B$ is essentially of finite type over a field of
      characteristic zero.
      If the closed fiber of $\varphi$ has terminal (resp.\ canonical)
      singularities, then all fibers of $\varphi$ have terminal (resp.\
      canonical) singularities.
    \item\label{cor:locprobforrat}
      If the closed fiber of $\varphi$ has rational singularities, then all
      fibers of $\varphi$ has rational singularities.
  \end{enumerate}
\end{corollary}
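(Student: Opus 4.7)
The plan is to apply \cite[Prop.\ 7.9.8]{EGAIV2}, which is the variant of Theorem \ref{thm:grothlocprob} in which the use of Gabber's weak local uniformization theorem is replaced by the existence of resolutions of singularities for reduced module-finite $A$-algebras. In our setting $A$ is a quasi-excellent $\QQ$-algebra, so Hironaka's theorem \cite[Ch.\ I, \S3, Main Thm.\ I$(n)$]{Hir64} supplies the required resolutions, and the formal fibers of $A$ are geometrically regular, hence geometrically terminal, canonical, and rational since regular local $\QQ$-algebras satisfy all three properties. To make EGA's proposition applicable, I restrict to the full subcategory $\sC$ of noetherian local rings essentially of finite type over a field of characteristic zero (for $(\ref{cor:locprobfortercan})$) or of quasi-excellent local $\QQ$-algebras (for $(\ref{cor:locprobforrat})$); this category is stable under homomorphisms essentially of finite type, and in the first case it guarantees that each ring encountered carries a dualizing complex, so that terminal and canonical singularities are defined.

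The remaining work is to verify the permanence conditions $(\hyperref[cond:ascentphom]{\textup{R}'_{\textup{I}}})$, $(\hyperref[cond:descent]{\textup{R}_{\textup{II}}})$, $(\hyperref[cond:deforms]{\textup{R}_{\textup{IV}}})$, $(\hyperref[cond:generizes]{\textup{R}_{\textup{V}}})$ for each $\bR \in \{\text{terminal},\text{canonical},\text{rational}\}$ on the appropriate category. The deformation condition $(\hyperref[cond:deforms]{\textup{R}_{\textup{IV}}})$ is exactly Proposition \ref{prop:tercanratdeform} in the terminal and canonical cases, and Proposition \ref{prop:ratdeform} in the rational case. Localization $(\hyperref[cond:generizes]{\textup{R}_{\textup{V}}})$ for terminal and canonical singularities is \cite[Cor.\ 2.12]{Kol13}; for rational singularities, it follows from the local characterization in Definition \ref{def:ratsings} together with flat base change for higher direct images (or equivalently from the pseudo-rationality characterization used in Proposition \ref{prop:ratdeform} and \cite[Rem.\ 7.4]{Mur}).

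For ascent $(\hyperref[cond:ascentphom]{\textup{R}'_{\textup{I}}})$ and descent $(\hyperref[cond:descent]{\textup{R}_{\textup{II}}})$, I will reduce via the usual transitivity of fibers \cite[Rem.\ 7.3.5$(ii)$]{EGAIV2} to the case where $A$ is a field, so that $\varphi$ becomes an essentially smooth (resp.\ faithfully flat) homomorphism between local rings in $\sC$. For the rational case, both ascent and descent reduce to the statement that rational singularities are preserved under base change by essentially smooth morphisms and descend under faithfully flat morphisms, which follows from flat base change applied to the defining vanishing $R^if_*\cO_W = 0$ together with the normality ascent and descent of \cite[Cor.\ to Thm.\ 23.9]{Mat89}. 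For the terminal and canonical cases, ascent under essentially smooth maps is \cite[Prop.\ 2.15]{Kol13}, which was already used in Lemma \ref{lem:tercanratisgeom}; descent under faithfully flat maps in $\sC$ similarly reduces to the behaviour of discrepancies under pullback along a smooth morphism, using that dualizing complexes base change appropriately \cite[(2) on p.\ 299]{Har66}.

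The main obstacle is the ascent/descent package $(\hyperref[cond:ascentphom]{\textup{R}'_{\textup{I}}})$ and $(\hyperref[cond:descent]{\textup{R}_{\textup{II}}})$ for terminal and canonical singularities, since these notions are classically formulated only for schemes admitting a dualizing complex and require care with pullbacks of canonical divisors. The restriction to the category $\sC$ of local rings essentially of finite type over a field of characteristic zero is precisely what allows us to invoke \cite[Prop.\ 2.15]{Kol13} and to have dualizing complexes available at every stage; this is the reason part $(\ref{cor:locprobfortercan})$ is stated under the finite type assumption on $B$, whereas part $(\ref{cor:locprobforrat})$ is not. Once these conditions are verified, EGA's Prop.\ 7.9.8 yields the statement directly.
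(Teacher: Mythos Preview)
Your overall strategy—invoke \cite[Prop.\ 7.9.8]{EGAIV2} using Hironaka's theorem for the resolution hypothesis—is exactly what the paper does. However, you have misidentified the hypotheses of that proposition, apparently conflating them with the hypotheses $(\hyperref[cond:ascentphom]{\textup{R}'_{\textup{I}}})$, $(\hyperref[cond:descent]{\textup{R}_{\textup{II}}})$, $(\hyperref[cond:deforms]{\textup{R}_{\textup{IV}}})$, $(\hyperref[cond:generizes]{\textup{R}_{\textup{V}}})$ of the paper's Theorem~\ref{thm:grothlocprob}. The actual inputs to \cite[Prop.\ 7.9.8]{EGAIV2} are the conditions numbered (7.9.7.1)--(7.9.7.4) together with (7.9.8.2): stability of the category, \emph{openness} of the $\bR$-locus, lifting from Cartier divisors, stability under finitely generated ground field extensions (i.e.\ $(\hyperref[cond:fgfieldext]{\textup{P}_{\textup{IV}}})$), and resolutions for module-finite $A$-algebras. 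Ascent $(\hyperref[cond:ascentphom]{\textup{R}'_{\textup{I}}})$ and descent $(\hyperref[cond:descent]{\textup{R}_{\textup{II}}})$ are \emph{not} among these, and localization $(\hyperref[cond:generizes]{\textup{R}_{\textup{V}}})$ is not what is asked for either.

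Concretely, this means the ``main obstacle'' you identify—establishing ascent and descent for terminal and canonical singularities under arbitrary flat local maps—is not an obstacle at all: neither is needed. (Your sketch of descent for terminal/canonical singularities is in any case unconvincing: faithfully flat descent does not reduce to pullback along smooth morphisms.) What you do need and have omitted is the openness condition (7.9.7.2); the paper handles this via \cite[Cor.\ 2.12]{Kol13} for terminal/canonical and Remark~\ref{rem:ratsingsoneres} for rational. You also need (7.9.7.4), which is provided directly by Lemma~\ref{lem:tercanratisgeom} rather than indirectly through $(\hyperref[cond:ascentphom]{\textup{R}'_{\textup{I}}})$ and $(\hyperref[cond:descent]{\textup{R}_{\textup{II}}})$. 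Once you swap in the correct checklist, the proof is short and matches the paper's: verify stability of the category, openness (\cite[Cor.\ 2.12]{Kol13} and Remark~\ref{rem:ratsingsoneres}), deformation (Propositions~\ref{prop:tercanratdeform} and~\ref{prop:ratdeform}), Lemma~\ref{lem:tercanratisgeom} for (7.9.7.4), and Hironaka for (7.9.8.2).
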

\begin{proof}
  By Lemma \ref{lem:tercanratisgeom}, we do not need to distinguish between
  ``has terminal (resp.\ canonical, rational) singularities'' and
  ``geometrically has terminal (resp.\ canonical, rational) singularities.''
  We want to apply \cite[Prop.\ 7.9.8]{EGAIV2} where the category
  \emph{\textbf{C}} in their notation is the category of schemes
  essentially of finite type over a field of characteristic zero in situation
  $(\ref{cor:locprobfortercan})$, and the category of schemes locally
  esssentially of finite type over a quasi-excellent local $\QQ$-algebra in
  situation $(\ref{cor:locprobforrat})$.
  \begin{itemize}
    \item For \cite[(7.9.7.1)]{EGAIV2},
      the category \emph{\textbf{C}} is stable under morphisms locally
      essentially of finite type by definition.
    \item For \cite[(7.9.7.2)]{EGAIV2}, the terminal and canonical
      locus of a scheme $X$ in \emph{\textbf{C}} is open by \cite[Cor.\
      2.12]{Kol13}, and the rational locus of a scheme $X$ in \emph{\textbf{C}}
      is open by Remark \ref{rem:ratsingsoneres}.
    \item The condition \cite[(7.9.7.3)]{EGAIV2} is our condition
      $(\ref{cond:deforms})$ when $\sC$ is the category of rings whose spectra
      are in \emph{\textbf{C}}, which holds by
      Propositions \ref{prop:tercanratdeform} and \ref{prop:ratdeform}.
    \item The condition \cite[(7.9.7.4)]{EGAIV2} is a version of our condition
      $(\ref{cond:fgfieldext})$ when $\sC$ is the category is the category of
      rings whose spectra are in \emph{\textbf{C}}, which holds by Lemma
      \ref{lem:tercanratisgeom}.
    \item For \cite[(7.9.8.2)]{EGAIV2}, every module-finite
      $A$-algebra has a resolution of singularities by \cite[Ch.\ I,
      \S3, Main Thm.\ I$(n)$]{Hir64}.\qedhere
  \end{itemize}
\end{proof}
We state a global version of this result as well.
A similar statement for the behavior of rational singularities in proper flat
families was shown to us by J\'anos Koll\'ar; see also \cite[Thm.\ 4]{Elk78}.
\begin{corollary}\label{cor:rationalsingglobal}
  Let $f\colon Y \to X$ be a flat morphism of locally noetherian $\QQ$-schemes,
  such that the local rings of $X$ and $Y$ are quasi-excellent.
  \begin{enumerate}[label=$(\roman*)$]
    \item Suppose $f$ maps closed points to closed points.
      If every closed fiber of $f$ has rational singularities, then all
      fibers of $f$ have rational singularities.
      If the local rings of $Y$ are essentially of finite type over
      fields of characteristic zero, and every closed fiber of $f$ has terminal
      (resp.\ canonical) singularities, then all fibers of $f$ have terminal
      (resp.\ canonical) singularities.
    \item Suppose $f$ is closed.
      Then, the locus of points $x \in X$ over which $f^{-1}(x)$ has
      rational singularities is stable under
      generization.
      If the local rings of $Y$ are essentially of finite type over
      fields of characteristic zero, then the locus of points $x \in X$ over
      which $f^{-1}(x)$ has terminal (resp.\ canonical) singularities is stable
      under generization.
  \end{enumerate}
\end{corollary}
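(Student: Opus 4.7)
The plan is to mirror the proof of Theorem \ref{thm:grothlocprobglobalintro} in \S\ref{sect:globalapps}, reducing each global assertion to the local result Corollary \ref{cor:locprobfortercanrat} by localizing at suitable points. Both parts only use the hypotheses on closed points or closedness of $f$ in order to lift or produce specializations; the rest of the work is packaged in Corollary \ref{cor:locprobfortercanrat}.

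For part $(i)$, I would take an arbitrary point $\xi \in Y$ with image $\eta = f(\xi)$ and, using \cite[\href{https://stacks.math.columbia.edu/tag/02IL}{Tag 02IL}]{stacks-project}, find a specialization $\xi \rightsquigarrow y$ to a closed point $y \in Y$. The hypothesis that $f$ maps closed points to closed points then gives that $x \coloneqq f(y)$ is closed in $X$, and localization yields a flat local homomorphism $\varphi\colon \cO_{X,x} \to \cO_{Y,y}$ whose closed fiber is a local ring of $f^{-1}(x)$, which has the prescribed singularity type by hypothesis. Corollary \ref{cor:locprobfortercanrat} applied to $\varphi$ shows that all fibers of $\varphi$ have the prescribed singularity type; in particular the fiber over the image of $\eta$ in $\Spec(\cO_{X,x})$, which contains $\cO_{Y,\xi}$ as a localization, has rational (resp.\ terminal, canonical) singularities, and one concludes using Lemma \ref{lem:tercanratisgeom} together with the fact that rational/terminal/canonical singularities localize (see \cite[Cor.\ 2.12]{Kol13} and Definition \ref{def:ratsings}).

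For part $(ii)$, I would consider a specialization $\eta \rightsquigarrow x$ in $X$ with $f^{-1}(x)$ having the prescribed singularity type, pick an arbitrary $\xi \in f^{-1}(\eta)$, and use that $f$ is closed to lift $\eta \rightsquigarrow x$ to a specialization $\xi \rightsquigarrow y$ in $Y$ via \cite[\href{https://stacks.math.columbia.edu/tag/0066}{Tag 0066}]{stacks-project}. Localization again produces a flat local homomorphism $\varphi\colon \cO_{X,x} \to \cO_{Y,y}$ with geometrically rational (resp.\ terminal, canonical) closed fiber, and Corollary \ref{cor:locprobfortercanrat} finishes the argument exactly as in part $(i)$.

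The only real subtlety is matching the local hypotheses of Corollary \ref{cor:locprobfortercanrat} to the global hypotheses stated here: for the rational case, we only need $\cO_{X,x}$ quasi-excellent, which holds by assumption on the local rings of $X$; for the terminal/canonical case, we additionally need $\cO_{Y,y}$ to be essentially of finite type over a field of characteristic zero, which is precisely the extra hypothesis imposed on the local rings of $Y$ in those cases. No new technical ingredient beyond what has already been developed is required.
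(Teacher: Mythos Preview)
Your proposal is correct and follows essentially the same approach as the paper: the paper's proof simply says that the result follows from Corollary \ref{cor:locprobfortercanrat} using the strategies in Propositions \ref{thm:grothlocprobglobal} and \ref{cor:pmorphismgenerizes}, and you have faithfully spelled out exactly those strategies (specialize to a closed point via \cite[\href{https://stacks.math.columbia.edu/tag/02IL}{Tag 02IL}]{stacks-project} for part $(i)$, lift specializations via \cite[\href{https://stacks.math.columbia.edu/tag/0066}{Tag 0066}]{stacks-project} for part $(ii)$, then localize and apply the local result). One minor remark: for the rational case you also need $\cO_{Y,y}$ quasi-excellent, not just $\cO_{X,x}$, but this is already part of the global hypotheses on $Y$.
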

\begin{proof}
  This follows from
  Corollary \ref{cor:locprobfortercanrat} using the strategies in Propositions
  \ref{thm:grothlocprobglobal} and \ref{cor:pmorphismgenerizes}.
\end{proof}
For completeness, we end by showing
that pseudo-rationality satisfies $(\ref{cond:descent})$ when
$\sC$ is the category of noetherian rings with residual complexes in the sense
of \cite[Def.\ on p.\ 304]{Har66}.
A special case of this statement was shown to us by Karl Schwede.
Similar results hold for cyclically pure homomorphisms of
$\QQ$-algebras whose local rings are quasi-excellent; see
\citeleft\citen{Bou87}\citemid Thm.\ on p.\
65\citepunct \citen{Mur}\citemid Thm.\ C\citeright.
\begin{proposition}\label{prop:pseudoratdescends}
  Let $\varphi\colon (A,\fm) \to (B,\fn)$ be a flat local homomorphism of
  noetherian local rings, such that $B$ has a residual complex.
  If $B$ is pseudo-rational, then $A$ is pseudo-rational.
\end{proposition}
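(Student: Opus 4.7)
The plan is to verify, one by one, the four defining conditions of pseudo-rationality (Definition \ref{def:pseudorational}) for $A$. Since $\varphi$ is flat and local, it is faithfully flat. Normality and Cohen--Macaulayness both descend along $\varphi$ by standard faithful flat descent \cite[Cor.\ to Thm.\ 23.9 and Thm.\ 17.3$(i)$]{Mat89}, yielding conditions $(i)$ and $(ii)$. For $(iii)$, observe that $B^{*} \coloneqq \widehat{A}\otimes_A B$ is the $\fm B$-adic completion of $B$ by \cite[Ch.\ 0, Lem.\ 6.8.3.1]{EGAInew}, and the further $\fn B^{*}$-adic completion $B^{*} \to \widehat{B}$ is faithfully flat. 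Since $\widehat{B}$ is reduced by pseudo-rationality of $B$, so is $B^{*}$, and therefore so is $\widehat{A}$ by faithful flat descent of reducedness.

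The main content is condition $(iv)$. Fix a proper birational morphism $f\colon W \to \Spec(A)$ with $W$ normal, set $d = \dim A$ and $E = f^{-1}(\{\fm\})$, and form the flat base change $f_{B}\colon W_{B} \coloneqq W \times_{\Spec A} \Spec B \to \Spec B$. The plan is to pass to a normal modification $\widetilde f\colon \widetilde{W}_{B} \to \Spec B$ refining $f_{B}$ (for instance via a normalization, which one can make finite thanks to the residual complex on $B$ and Kawasaki's characterization of rings with dualizing complexes as quotients of finite-dimensional Gorenstein rings). Pseudo-rationality of $B$ applied to $\widetilde f$ then gives injectivity of $\delta^{\dim B}_{\widetilde f}(\cO_{\widetilde{W}_{B}})$.

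The descent from $B$ back to $A$ uses Grothendieck--Serre duality, which is available on $\Spec B$ and on proper $B$-schemes precisely because of the residual complex. Dually, injectivity of $\delta^{\dim B}_{\widetilde f}(\cO_{\widetilde{W}_{B}})$ is equivalent to the surjectivity in top degree of the trace map $\widetilde f_{*}\omega_{\widetilde{W}_{B}} \to \omega_{B}$. Flat base change for dualizing complexes identifies this trace, away from the locus where $\widetilde{W}_{B} \to W_{B}$ fails to be an isomorphism, with the base change along $\varphi$ of the corresponding trace for $f$. Faithful flatness of $\varphi$ then forces the surjectivity of the trace for $f$ in top degree, which dualizes back to injectivity of $\delta^{d}_{f}(\cO_{W})$. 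The transitivity spectral sequence
\[
    E_{2}^{p,q} = H^{p}_{\fn}\bigl(H^{q}_{\fm B}(B)\bigr) \Longrightarrow H^{p+q}_{\fn}(B)
\]
serves to track the dimension shift $\dim B - d = \dim B/\fm B$ appearing throughout the comparison.

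The main obstacle lies in this last step: tracking the dimension shift through local cohomology and duality, and verifying that the modification $\widetilde{W}_{B} \to W_{B}$ affects the relevant trace only on a locus whose cohomological contribution vanishes. The residual complex hypothesis on $B$ is essential at every stage, supplying both the dualizing apparatus on $B$-schemes and the needed finiteness of normalization; it is this hypothesis that lets the argument proceed without the quasi-excellence assumptions required in the related results \cite{Bou87} and \cite{Mur}.
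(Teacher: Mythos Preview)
Your handling of conditions $(i)$--$(iii)$ is fine and essentially matches the paper. The real issue is your treatment of condition $(iv)$.

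Your duality argument requires a dualizing complex on the \emph{source} side: to speak of ``the trace for $f$,'' to form $\omega_W$, and to ``dualize back'' from surjectivity of the trace to injectivity of $\delta^d_f(\cO_W)$, you need $A$ (and hence $W$) to carry a dualizing complex. But the hypothesis is only that $B$ has a residual complex; nothing is assumed about $A$, and dualizing complexes do not in general descend along faithfully flat local maps. So the step ``Faithful flatness of $\varphi$ then forces the surjectivity of the trace for $f$ in top degree, which dualizes back to injectivity of $\delta^d_f(\cO_W)$'' is not available. The spectral-sequence bookkeeping for the dimension shift, which you yourself flag as the main obstacle, compounds this: even granting dualizing complexes on both sides, making that comparison precise is nontrivial, and you have only sketched it.

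The paper sidesteps both problems at once by a localization trick. Choose a minimal prime $\fq$ of $B$ lying over $\fm$ and replace $B$ by $B_\fq$; the residual complex on $B$ is used exactly here, via \cite[\S4, Cor.\ of $(iii)$]{LT80}, to guarantee that $B_\fq$ remains pseudo-rational. After this replacement $\sqrt{\fm B} = \fn$ and $\dim B = \dim A = d$, so there is no dimension shift at all. One then base-changes $f\colon W \to \Spec A$ to $f'\colon W' \to \Spec B$, normalizes $W'$ (finiteness coming from analytic unramifiedness of $B$, i.e., condition $(iii)$), and runs a direct commutative-diagram chase with the edge maps $\delta^d$ and the naturality results of \cite{Smi97}. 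The injectivity of $\delta^d_{f'\circ\nu}$ from pseudo-rationality of $B$, together with $H^d_\fm(A) \hookrightarrow H^d_\fm(B) = H^d_\fn(B)$ from faithful flatness and $\sqrt{\fm B} = \fn$, forces $\delta^d_f(\cO_W)$ to be injective. No duality on the $A$ side is ever invoked.
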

\begin{proof}
  First, we know that $A$ is normal \cite[Cor.\ to Thm.\ 23.9]{Mat89} and
  Cohen--Macaulay \cite[Thm.\ 2.1.7]{BH98}.
  To prove that $\widehat{A}$ is reduced, we note that
  \[
    \widehat{\varphi}\colon \widehat{A} \longrightarrow \widehat{B}
  \]
  is flat, where $\widehat{B}$ is the $\fn$-adic completion of $B$ \cite[Thm.\
  22.4$(i)$]{Mat89}.
  But $\widehat{B}$ is reduced by assumption, and hence $\widehat{A}$ is reduced
  by \cite[Cor.\ to Thm.\ 23.9]{Mat89}.
  \par It remains to show that condition $(\ref{def:pseudoratbiratcond})$ holds.
  Let $\fq \subseteq B$ be a minimal prime lying over $\fm$.
  The localization $B_\fq$ is pseudo-rational since $B$ has a residual complex
  \cite[\S4, Cor.\ of $(iii)$]{LT80}.
  By replacing $B$ with $B_\fq$, we reduce to the case where $B/\fm B$ is
  zero-dimensional, in which case $\sqrt{\fm B} = \fn$.
  Let $f\colon W \to X \coloneqq \Spec(A)$ be a proper birational morphism,
  where $W$ is normal.
  Set $Y \coloneqq \Spec(B)$, and consider the commutative diagram
  \[
    \begin{tikzcd}
      Z \rar{\nu} & W' \rar{f'}\dar[swap]{g'}
      & Y\dar{g}\\
      & W \rar{f} & X
    \end{tikzcd}
  \]
  where the square is cartesian, where $g \coloneqq \Spec(\varphi)$, and where
  $\nu$ is the normalization of $W' \coloneqq W \times_X Y$.
  Note that $f'$ is proper by base change and birational by flat base change
  \cite[Prop.\ 3.9.9]{EGAInew}, and hence $\nu$ is finite by \cite[$(iv)''$ in
  Rem.\ $(a)$ on pp.\ 102--103]{LT80}.
  We then have the commutative diagram
  \[
    \begin{tikzcd}[column sep=5.5em]
      H^d_{(g' \circ \nu)^{-1}(E)}(\cO_Z)
      & \arrow[hook']{l}[swap]{\delta^d_{f'\circ\nu}(\cO_Z)}
      H^d_{\{\fn\}}\bigl((f'\circ\nu)_*\cO_Z\bigr)
      & \lar[equal] H^d_{\fn}(B)\\
      H^d_{E}\bigl((g' \circ \nu)_*\cO_Z\bigr) \uar
      & \lar[swap]{\delta^d_f((g'\circ\nu)_*\cO_Z)}
      H^d_{\{\fm\}}\bigl((f \circ g' \circ \nu)_*\cO_Z\bigr) \uar
      & \lar[equal] H^d_{\fm}(B) \uar[equal]\\
      H^d_{E}(\cO_W) \uar
      & \lar[swap]{\delta^d_f(\cO_W)}
      H^d_{\{\fm\}}(f_*\cO_W) \uar
      & \lar[equal] H^d_{\fm}(A) \uar[hook]
    \end{tikzcd}
  \]
  where the top half of the diagram is from \cite[Prop.\ 1.12]{Smi97}, and the
  bottom half is from the naturality of $\delta^d_f(-)$ applied to the pullback
  map $\cO_W \to (g' \circ \nu)_*\cO_Z$ \cite[Lem.\
  1.11]{Smi97}.
  The horizontal arrow on the top left is injective by the assumption that $B$
  is pseudo-rational, the vertical arrow on the top right is an equality since
  $\sqrt{\fm B} = \fn$, and
  the vertical arrow on the bottom right is injective by the flatness of
  $\varphi$.
  The commutativity of the diagram shows that $\delta^d_f(\cO_W)$ is injective
  as required.
\end{proof}

\begin{table}[b]
  \begin{ThreePartTable}
    \begin{TableNotes}
      \note{``C--M'' stands for ``Cohen--Macaulay.''\\
      See \cite[Rem.\ 4.2 and pp.\ 1--2]{AF94} for definitions of
      $(CI_n)$, $(G_n)$, $(S_n)$, $\cid$, and $\cmd$.\\
      We list necessary assumptions for the results in the bottom two
      sections of the table.}
    \item[1]\label{tn:thmanormalrn} Either $B$ is universally catenary, or $B
      \otimes_A k$ has geometrically (normal $+$ $(R_n)$) formal fibers.
    \item[2]\label{tn:thmaqea} $A$ is quasi-excellent.
    \item[3]\label{tn:thmafrat} $A$ is quasi-excellent and $B$ is excellent.
    \item[4]\label{tn:thmarn} $\widehat{B}$ is equidimensional.
    \item[5]\label{tn:thmacia} $A$ has complete intersection formal fibers.
    \item[6]\label{tn:thmacma} $A$ has Cohen--Macaulay formal fibers.
    \item[7]\label{tn:thmbrn} $A$ is universally catenary.
    \item[N]\label{tn:thmbnagata} $A/I$ is Nagata.
    \item[$\QQ$]\label{tn:thmarat} $A$ and $B$ are quasi-excellent
      $\QQ$-algebras.
    \item[char0]\label{tn:thmachar0} $A$ is a quasi-excellent
      $\QQ$-algebra and $B$ is essentially of finite type
      over a field of characteristic zero.
    \end{TableNotes}
    {\scriptsize
      \renewcommand{\arraystretch}{1.45}
      \begin{longtable}[c]{ccc@{ $+$ }c}
        \caption{\normalsize Special cases of Problems \ref{prob:grothlocprob}
        and \ref{prob:grothliftprob}.\label{table:resultslist}\\
        All results except those in the bottom section can be
        obtained from our methods.}\\
        \toprule
        $\bR$
        & Grothendieck's localization problem \ref{prob:grothlocprob}
        & \multicolumn{2}{c}{Local lifting problem \ref{prob:grothliftprob}}\\
        \cmidrule(lr){1-1}\cmidrule(lr){2-2}\cmidrule(lr){3-4}
        regular & \cite[Thm.\ on p.\ 297]{And74}
        & \multicolumn{2}{c}{\cite[Thm.\ 3]{Rot79}}\\
        $(R_n) + (S_{n+1})$ & \cite[Prop.\ 1.5]{BI84}
        & \multicolumn{2}{c}{\cite[2.4$(iv)$]{BI84}}\\
        normal & \cite[Prop.\ 2.4]{Nis81}
        & \multicolumn{2}{c}{\cite[Thm.\ on p.\ 154]{Nis81}}\\
        weakly normal & Corollary \ref{cor:thmaforwnsn}
        & \multicolumn{2}{c}{Corollary \ref{cor:thmaforwnsn}}\\
        seminormal & Corollary \ref{cor:thmaforwnsn}
        & \multicolumn{2}{c}{Corollary \ref{cor:thmaforwnsn}}\\
        reduced & \cite[Prop.\ 2.4]{Nis81}
        & \multicolumn{2}{c}{\cite[Prop.\ 3.6]{Mar75}}\\
        complete intersection & \cite[Thm.\ 2]{Tab84}
        & \cite[Thm.\ 2]{Tab84}
        & \cite[Thm.\ 2.3]{BI84}\tnotex{tn:thmbnagata}\\
        Gorenstein & \citeleft\citen{HS78}\citemid Thm.\ 3.3\citepunct
        \citen{Mar84}\citemid Thm.\ 3.2\citeright
        & \cite[Thm.\ 3.2]{Mar84}
        & \cite[Thm.\ 2.3]{BI84}\tnotex{tn:thmbnagata}\\
        Cohen--Macaulay & \cite[Thm.\ 4.1]{AF94}
        & \cite[Thm.\ 4.1]{AF94}
        & \cite[Thm.\ 2.3]{BI84}\tnotex{tn:thmbnagata}\\
        $(CI_n)$ & \citeleft\citen{CI93}\citemid Cor.\ 2.3\citepunct
        \citen{AF94}\citemid Thm.\ 4.5\citeright
        & \multicolumn{2}{c}{\cite[Cor.\ 3.5]{CI93}\tnotex{tn:thmbnagata}}\\
        $(G_n)$ & \citeleft\citen{CI93}\citemid Thm.\ 2.2 and
        Rem.\ 2.8\citepunct \citen{AF94}\citemid Thm.\ 4.5\citeright
        & \cite[Thm.\ 3.2]{Mar84}
        & \cite[Thm.\ 3.4]{CI93}\tnotex{tn:thmbnagata}\\
        $(S_n)$ & \cite[Thm.\ 4.5]{AF94}
        & \cite[Thm.\ 4.1]{AF94}
        & \cite[Thm.\ 3.4]{CI93}\tnotex{tn:thmbnagata}\\
        C--M $+$ $F$-injective & Corollary \ref{cor:applythmacmfi}
        & \multicolumn{2}{c}{Corollary \ref{cor:applythmacmfi}}\\
        \midrule
        normal + $(R_n)$ & \cite[Prop.\ 1.2]{BI84}\tnotex{tn:thmanormalrn}
        & \multicolumn{2}{c}{\cite[2.4$(iii)$]{BI84}}\\
        domain & Corollary \ref{cor:domain}\tnotex{tn:thmaqea}
        & \multicolumn{2}{c}{}\\
        terminal singularities & Corollary
        \ref{cor:locprobfortercanrat}$(\ref{cor:locprobfortercan})$\tnotex{tn:thmachar0}
        & \multicolumn{2}{c}{}\\
        canonical singularities & Corollary
        \ref{cor:locprobfortercanrat}$(\ref{cor:locprobfortercan})$\tnotex{tn:thmachar0}
        & \multicolumn{2}{c}{}\\
        rational singularities & Corollary
        \ref{cor:locprobfortercanrat}$(\ref{cor:locprobforrat})$\tnotex{tn:thmarat}
        & \multicolumn{2}{c}{}\\
        $F$-rational & Corollary \ref{cor:frational}\tnotex{tn:thmafrat} &
        \multicolumn{2}{c}{}\\
        \midrule
        $(R_n)$ & \cite[Thm.\ 1.2]{Ion86}\tnotex{tn:thmarn}
        & \multicolumn{2}{c}{\cite[Thm.\ 1.4]{Ion86}\tnotex{tn:thmbrn} \tnote{,}
        \tnotex{tn:thmbnagata}}\\
        $\cid \le n$ & \cite[Main Thm.\ $(a)$]{AF94}\tnotex{tn:thmacia}
        & \multicolumn{2}{c}{}\\
        $\cmd \le n$ & \cite[Main Thm.\ $(b)$]{AF94}\tnotex{tn:thmacma}
        & \multicolumn{2}{c}{}\\
        \bottomrule
      \insertTableNotes
    \end{longtable}}
  \end{ThreePartTable}
\end{table}
\begin{landscape}
    \centering
    \begin{ThreePartTable}
      \begin{TableNotes}
        \note{``C--M'' stands for ``Cohen--Macaulay.''
        See \cite[Rem.\ 4.2 and pp.\ 1--2]{AF94} for definitions of $(CI_n)$,
        $(G_n)$, $(S_n)$, $\cid$, and $\cmd$.}
        \item[1]\label{tn:rncated} $(\ref{cond:deforms})$ is false when $\bR =$
          ``$(R_n)$'' \cite[Rem.\ 5.12.6]{EGAIV2}, but holds if one restricts
          $\sC$ to be the category of equidimensional catenary
          noetherian rings.
          For ``normal $+$ $(R_n)$,'' we note that normal local rings are
          equidimensional.
        \item[2]\label{tn:domainr1false} $(\ref{cond:ascentphom})$ is false when
          $\bR =$ ``domain''
          \cite[Rems.\ 6.5.5$(ii)$ and 6.15.11$(ii)$]{EGAIV2}.
        \item[3]\label{tn:fratr1} $(\ref{cond:ascentphom})$ holds when $\bR =$
          ``$F$-rational'' if $\sC$ is the category of excellent rings; cf.\
          \cite[Thm.\ 6.4]{Has01}.
          Enescu assumes that $B/\fm B \otimes_{A/\fm} F^e_*(A/\fm)$ is
          noetherian for every $e > 0$, which is used in the proof of
          \cite[Thm.\ 2.19]{Ene00}.
          Enescu states that this latter result
          holds without this assumption as long as ideals generated by systems
          of parameters in the rings $B/\fm B \otimes_{A/\fm} F^e_*(A/\fm)$ are
          Frobenius closed in \cite[Rem.\ 2.20]{Ene00}.
          This condition holds by the proof of \cite[Prop.\ 3.11]{DM}.
        \item[4]\label{tn:cidr1ci} $(\ref{cond:ascentphom})$ holds for
          complete intersection (resp.\ Cohen--Macaulay) homomorphisms when $\bR
          =$ ``$\cid \le n$'' (resp.\ ``$\cmd \le n$'').
          The cited references relate the complete intersection (resp.\
          Cohen--Macaulay) defects of the domain and fiber to that of the
          codomain.
        \item[e]\label{tn:excellent} This property holds if $\sC$ is the category
          of excellent rings.
        \item[$\QQ$]\label{tn:rationalqeqscheme}
          $(\ref{cond:ascentphom})$, $(\ref{cond:deforms})$, and
          $(\ref{cond:generizes})$ hold when $\sC$ is the category of
          $\QQ$-algebras whose local rings are quasi-excellent, and
          $(\ref{cond:openness})$ holds for complete local $\QQ$-algebras.
          For $(\ref{cond:ascentphom})$, Elkik works with rings essentially of
          finite type over a field of characteristic zero, but the same proof
          works after replacing \cite[Thm.\ 2]{Elk78} with Proposition
          \ref{prop:ratdeform}.
        \item[rc]\label{tn:residualcomplex} This property holds for
          pseudo-rationality if $\sC$ is the
          category of noetherian rings that have a residual complex in the sense
          of \cite[Def.\ on p.\ 304]{Har66}.
        \item[CM]\label{tn:cm} This property holds if $\sC$ is the category of
          homomorphic images of Cohen--Macaulay rings.
        \item[char0]\label{tn:eftoverchar0}
          $(\ref{cond:openness})$ holds for complete local $\QQ$-alegbras since
          resolutions of singularities exist \cite[Ch.\ I, \S3, Main Thm.\
          I$(n)$]{Hir64}, and
          the other properties hold if $\sC$ is
          the category of
          rings essentially of finite type over possibly different
          fields of characteristic zero.
          The results for $(\ref{cond:deforms})$ require some extra work; see
          Proposition \ref{prop:tercanratdeform}.
      \end{TableNotes}
      \begingroup
      \tiny
      \renewcommand{\arraystretch}{1.425}
      \begin{longtable}[c]{*{6}{c}}
        \caption{\normalsize Properties satisfying Conditions
        \ref{cond:listof}.\label{table:condslist}\\
        The conditions hold under additional assumptions for
        the properties in the second and third sections of the table.}\\
        \toprule
        $\bR$
        & Ascent $(\ref{cond:ascentphom})$
        & Descent $(\ref{cond:descent})$
        & Openness $(\ref{cond:openness})$
        & Lifting from Cartier divisors $(\ref{cond:deforms})$
        & Localization $(\ref{cond:generizes})$\\
        \cmidrule(lr){1-1} \cmidrule(lr){2-6}
        regular
        & \multicolumn{2}{c}{\cite[Thm.\ 23.7]{Mat89}}
        & \cite[Thm.\ 6.12.7]{EGAIV2}
        & \cite[Ch.\ 0, Cor.\ 17.1.8]{EGAIV1}
        & \cite[Thm.\ 19.3]{Mat89}\\
        $(R_n) + (S_{n+1})$
        & \multicolumn{2}{c}{see $(R_n)$ and $(S_n)$}
        & see $(R_n)$ and $(S_n)$
        & \cite[Lem.\ 0$(ii)$]{BR82}
        & see $(R_n)$ and $(S_n)$\\
        normal
        & \multicolumn{2}{c}{\cite[Cor.\ to Thm.\ 23.9]{Mat89}}
        & \cite[Cor.\ 6.13.5]{EGAIV2}
        & \cite[Prop.\ I.7.4]{Sey72}
        & \cite[Ch.\ V, \S1, n\textsuperscript{o} 5, Prop.\ 16]{BouCA}\\
        weakly normal
        & \cite[Thm.\ 37]{Kol16}
        & \cite[Cor.\ II.2]{Man80}
        & \cite[Thm.\ 7.1.3]{BF93}
        & Proposition \ref{prop:weaklynormaldeforms}
        & \cite[Cor.\ IV.2]{Man80}\\
        seminormal
        & \cite[Thm.\ 37]{Kol16}
        & \cite[Thm.\ 1.6]{GT80}
        & \cite[Cor.\ 2.3]{GT80}
        & \cite[Main Thm.]{Hei08}
        & \cite[Cor.\ 2.2]{GT80}\\
        reduced
        & \multicolumn{2}{c}{\cite[Cor.\ to Thm.\ 23.9]{Mat89}}
        & \cite[Ch.\ II, \S2, n\textsuperscript{o} 6]{BouCA}
        & \cite[Prop.\ 3.4.6]{EGAIV2}
        & \cite[Ch.\ II, \S2, n\textsuperscript{o} 6, Prop.\ 17]{BouCA}\\
        complete intersection
        & \multicolumn{2}{c}{\cite[Thm.\ 2]{Avr75}}
        & \cite[Cor.\ 3.3]{GM78}
        & \cite[Thm.\ 2.3.4$(a)$]{BH98}
        & \cite[Cor.\ 1]{Avr75}\\
        Gorenstein
        & \multicolumn{2}{c}{\cite[Cor.\ 3.3.15]{BH98}}
        & \cite[Cor.\ 1.5]{GM78}
        & \cite[Prop.\ 3.1.19$(b)$]{BH98}
        & \cite[Prop.\ 3.1.19$(a)$]{BH98}\\
        Cohen--Macaulay
        & \multicolumn{2}{c}{\cite[Thm.\ 2.1.7]{BH98}}
        & \cite[Cor.\ 6.11.3]{EGAIV2}
        & \cite[Thm.\ 2.1.3$(a)$]{BH98}
        & \cite[Thm.\ 2.1.3$(b)$]{BH98}\\
        $(CI_n)$
        & \multicolumn{2}{c}{\cite[Prop.\ 1.10]{CI93}}
        & \cite[Prop.\ 1.10]{CI93}
        & \cite[Lem.\ 3.1]{Imb95}
        & \cite[Prop.\ 1.10]{CI93}\\
        $(G_n)$
        & \multicolumn{2}{c}{\citeleft\citen{RF72}\citemid Prop.\ 1\citepunct
        \citen{Pau73}\citemid Prop.\ 1\citeright}
        & \cite[Prop.\ 18$(2)$]{Ooi80}
        & \cite[Prop.\ 3]{RF72}
        & \cite[Cor.\ $(b)$ to Prop.\ 1]{RF72}\\
        $(S_n)$
        & \multicolumn{2}{c}{\cite[Thm.\ 23.9$(iii)$]{Mat89}}
        & \cite[Prop.\ 6.11.2$(ii)$]{EGAIV2}
        & \cite[Lem.\ 0$(i)$]{BR82}
        & \cite[Rem.\ 5.7.3$(iv)$]{EGAIV2}\\
        C--M $+$ $F$-injective
        & \cite[Thm.\ 4.3]{Ene09} & \cite[Lem.\ 4.6]{Has10}
        & \cite[Cor.\ 4.18]{Has10}
        & \cite[Thm.\ 3.4(1)]{Fed83}
        & \cite[Cor.\ 4.11]{Has10}\\
        \midrule
        normal + $(R_n)$
        & \multicolumn{2}{c}{see normal and $(R_n)$}
        & see normal and $(R_n)$
        & see normal and $(R_n)$\tnotex{tn:rncated}
        & see normal and $(R_n)$\\
        domain
        & \textbf{false}\tnotex{tn:domainr1false}
        & \cite[Prop.\ 2.1.14]{EGAIV2}
        & \cite[Lem.\ 1]{Nag59}
        & \cite[Prop.\ 3.4.5]{EGAIV2}
        & \cite[Ch.\ II, \S2, n\textsuperscript{o} 1, Rem.\ 7]{BouCA}\\
        terminal singularities
        &
        &
        & \cite[Cor.\ 2.12]{Kol13}\tnotex{tn:eftoverchar0}
        & \cite[Main Thm.]{Kaw99}\tnotex{tn:eftoverchar0}
        & \cite[Cor.\ 2.12]{Kol13}\tnotex{tn:eftoverchar0}\\
        canonical singularities
        &
        &
        & \cite[Cor.\ 2.12]{Kol13}\tnotex{tn:eftoverchar0}
        & \cite[Ch.\ VI, Thm.\ 5.2(2)]{Nak04}\tnotex{tn:eftoverchar0}
        & \cite[Cor.\ 2.12]{Kol13}\tnotex{tn:eftoverchar0}\\
        rational singularities
        & \cite[Thm.\ 5]{Elk78}\tnotex{tn:rationalqeqscheme}
        & Proposition \ref{prop:pseudoratdescends}\tnotex{tn:residualcomplex}
        & Remark \ref{rem:ratsingsoneres}\tnotex{tn:rationalqeqscheme}
        &
        Proposition \ref{prop:ratdeform}\tnotex{tn:rationalqeqscheme}
        & \cite[Lem.\ 7.3]{Mur}\tnotex{tn:rationalqeqscheme}\\
        $F$-rational
        & \cite[Thm.\ 2.27]{Ene00}\tnotex{tn:fratr1} \tnote{,} \tnotex{tn:excellent}
        & \cite[Prop.\ A.5]{DM}
        & \cite[Thm.\ 3.5]{Vel95}
        & \cite[Thm.\ 4.2$(h)$]{HH94}\tnotex{tn:cm}
        & \cite[Thm.\ 4.2$(f)$]{HH94}\tnotex{tn:cm}\\
        \midrule
        $(R_n)$
        & \multicolumn{2}{c}{\cite[Thm.\ 23.9$(i)$,$(ii)$]{Mat89}}
        & \cite[Prop.\ 6.12.9]{EGAIV2}
        & \cite[Lem.\ 2.1]{Ion86}\tnotex{tn:rncated}
        & \cite[Prop.\ 1.6]{CI93}\\
        $\cid \le n$
        & \multicolumn{2}{c}{\cite[Prop.\ 3.6]{Avr77}\tnotex{tn:cidr1ci}}
        & \cite[Prop.\ 3.4]{Avr77}
        & \cite[Prop.\ 3.10]{Avr77}
        & \cite[Prop.\ 3.8]{Avr77}\\
        $\cmd \le n$
        & \multicolumn{2}{c}{\cite[Cor.\ 6.3.2]{EGAIV2}\tnotex{tn:cidr1ci}}
        & \cite[Prop.\ 6.11.2$(i)$]{EGAIV2}
        & \cite[Ch.\ 0, Prop.\ 16.4.10$(i)$]{EGAIV1}
        & \cite[Prop.\ 6.11.5]{EGAIV2}\\
        \bottomrule
        \insertTableNotes
      \end{longtable}
      \endgroup
    \end{ThreePartTable}
\end{landscape}

\clearpage

\end{document}